\newcommand{\eqalign}[1]{\begin{aligned}#1\end{aligned}}
\newcommand{\R}{\mathbb{R}}
\newcommand{\N}{\mathbb{N}}
\crefname{hypothesis}{Hypothesis}{Hypotheses}
\title{Uniqueness, Lipschitz stability and  reconstruction   for the  inverse  optical tomography problem \thanks{Submitted to the editors DATE.}}
\author{ Houcine   Meftahi\thanks{Department of Mathematics, ENIT of Tunisia
  \email{houcine.meftahi@enit.utm.tn}.}}
\def\norm#1{\hspace{0.2ex} \|#1\| \hspace{0.2ex}} 
\begin{document}
\maketitle

\begin{abstract}
In this paper, we consider the inverse problem of recovering a diffusion  $\sigma$ and  absorption coefficients $q$ in steady-state optical tomography  
problem from the Neumann-to-Dirichlet map.  We first   prove a  Global uniqueness and Lipschitz stability estimate for  the absorption  parameter  
provided that the diffusion   $\sigma$  is known and show how to quantify the  Lipschitz stability constant for a given setting. Then,  we  prove a  Lipschitz stability result  for  simultaneous recovery  of  $\sigma$ and $q$. In both cases 
the   parameters belong to a known finite subspace with a priori known bounds. The proofs rely on a  monotonicity result combined with the techniques of localized potentials. 
To numerically solve the inverse problem, we propose a   Kohn-Vogelius-type cost functional over a class of admissible parameters subject to two boundary value problems. The
reformulation of the minimization problem via the Neumann-to-Dirichlet operator allows us to obtain the optimality conditions by using the Fr\'echet differentiability of
this operator and its inverse. The reconstruction is then performed by means of an iterative algorithm based on a quasi-Newton method. Finally, we  illustrate some 
 numerical   results.
\end{abstract} 
\begin{keywords}
 Optical tomography, Inverse problem, Uniqueness, Lipschitz stability, Monotonicity,  Localized potentials. 
 \end{keywords}
\begin{AMS}
 78A46, 65J22, 65M32, 35R30
 \end{AMS}
\section{Introduction}
In this paper, we consider the inverse problem of recovering  the  parameters $\sigma(x)$ and $q(x)$  in the elliptic partial differential  equation
\begin{equation}
\label{equation_intro}
-\nabla\cdot(\sigma \nabla u)+qu = 0 \textrm{ in }  \Omega, 
\end{equation}
from the  knowledge of all possible Cauchy data on the boundary $\partial\Omega$,
$ \sigma\partial_\nu u|_{\partial\Omega}, u|_{\partial\Omega}$.\\
Problem (\ref{equation_intro}) can be  viewed as   steady-state  diffusion optical tomography, where
light propagation is modeled by a diffusion approximation and the excitation frequency is set to zero.
Here $u$  represents the density of photons, $\sigma$ the diffuse coefficient  and $q$ the optical absorption. This  problem arises    in medical imaging   and  in geophysics, for example, in reflection seismology assuming a description in terms of time-harmonic scalar waves.
 For a full description of optical tomography,  we refer the reader to the topical reviews of Arridge \cite{arridge1999optical} and Gibson, Hebden and Arridge \cite{gibson2005recent}.
 
 Although it is common practice in optical  tomography to use the Robin-to-Robin
map to describe the boundary measurements (see \cite{arridge1999optical,heino2002estimation}), the Neumann-to-Dirichlet  map will be employed here instead. This is justified by the fact that in optical tomography, prescribing the Neumnann to-Dirichlet  map, is equivalent to prescribing the Robin-to-Robin
boundary map  as long as there are no additional unknown coefficients in the Robin conditions (see for instance \cite{harrach2009uniqueness}).
\\
 The paper  is split into three parts.  Part one is on proving uniqueness and Lipschitz stability  of the 
the absorption coefficient $q$ provided that the diffusion coefficient $\sigma$ is known.  Part two is on proving Lipschitz 
stability  of $\sigma$  and $q$  simultaneously.  Part three  deals with  the reconstruction of $\sigma$ and $q$ based on minimizing a Kohn-Vogelius  type  functional.

 The inverse problem of recovering   $q$ from the knowledge
of  the Dirichlet-to-Neumann map was first introduced (in a slightly different setting) by Calder\'on in \cite{calderon1980inverse}.  The uniqueness issue was treated by Sylvester and Uhlmann in \cite{sylvester1987global}.
For more recent result on uniqueness,  we refer the reader to \cite{uhlmann2009electrical}. 
By virtue of the work of Alessandrini  \cite{alessandrini1988stable}  it is known that both problems of recovering $\sigma$ or $q$ (in suitable regularity scales) enjoy logarithmic stability estimates   under mild a priori assumptions on the data. As
shown by Mandache \cite{mandache2001exponential}, this log-type estimate is optimal. Thus for arbitrary potentials $q$, Lipschitz stability cannot hold. As  discovered   in \cite{alessandrini2005lipschitz}, considering potentials or conductivities in certain finite-dimensional spaces provides improvements in terms of stability. Under certain  assumptions, the authors prove Lipschitz stability estimates.  Their argument  relies  on a combination of singular  solutions  and unique continuation estimates. This idea has been extended to more  complex equations and systems  (see for instance  \cite{beretta2011lipschitz,alessandrini2017lipschitz,gaburro2015lipschitz,alessandrini2018lipschitz,beretta2013lipschitz,
alberti2019calderon}).

As a key novelty  in this article, we  present a different approach based on the monotonicity and  the techniques of localized potentials  instead of  combining singular solutions with unique continuation results as previously
done in the literature. Following analogous results in electrical impedance
tomography and elasticity  \cite{harrach2019uniqueness,harrach2019global,eberle2019lipschitz},
here we will study the question whether the coefficient $q$  can be uniquely and stably reconstructed.  More precisely,  we show that $q$    is uniquely determined and depends upon the Neumaun-to-Dircihlet map  of (\ref{equation_intro})  in a Lipschitz way as long as  $\rm{supp}(\it q) \Subset\Omega$ and $\sigma$ is known.  Moreover,
we  quantify  the Lipschitz constant   for a given setting   by solving a  finite number of well-posed PDEs
which may be important to quantify the  noise robustness in practical applications. To our best knowledge,  this result  of 
quantitative Lipschitz stability  is new for the problem  under consideration.

As mentioned in  \cite{arridge1998nonuniqueness},   the  inverse problem of simultaneous  reconstruction of  $\sigma$ and $q$  is in general  not uniquely solvable, i.e., it is not possible to uniquely determine both $\sigma$ and $q$ from boundary data of $u$ provided that $\sigma$ and $q$ are smooth. The reason is that a diffusion coefficient can be transformed into an absorption coefficient by setting
\[
 v:=\sqrt{\sigma}u,
\]
which transforms equation (\ref{equation_intro})  into
\[
-\Delta v+ cv=0, \quad  c=\frac{\Delta\sqrt{\sigma}}{\sqrt{\sigma}}+\frac{q}{\sigma}.
\]
If $\sigma =1$ in a neighborhood of $\partial\Omega$,  then the boundary values remain unchanged. Hence,
boundary measurements can only contain information about $c$, from which one cannot
extract $\sigma$ and $q$. 
Despite this negative theoretical result, a prominent result by Harrach \cite{harrach2009uniqueness}  demonstrates 
that uniqueness holds for piecewise constant diffusion and piecewise analytic absorption coefficients.
The author  proves that under this condition both
parameters are simultaneously uniquely determined by knowledge of all possible pairs
of Neumann and Dirichlet boundary values $\sigma\partial_\nu\vert_S,  u\vert_S$,  of solutions  $u$ of (\ref{equation_intro}), and $S$  is  a non-empty  subset of $\partial\Omega$

In this paper, we go a step further    and we prove a  Lipschitz stability  for the inverse problem of recovering $q$   and 
$\sigma$ simultaneously.  The proof  relies  on a monotonicity estimates combined with the techniques of  localized potentials.
To the author's knowledge the  Lipschitz stability   presented in this work is  the first result on simultaneous recovery for a class of real-valued diffusion and absorption coefficients. 

The idea of using monotonicity  and localized potentials  method  has lead to a  several  results for inverse coefficient problems; see for instance
\cite{arnold2013unique,barth2017detecting,brander2018monotonicity,griesmaier2018monotonicity,harrach2012simultaneous,harrach2018localizing,harrach2010exact,harrach2017local}.  
Together with the recent results \cite{harrach2020uniqueness,harrach2019global,harrach2019uniqueness,eberle2019lipschitz},  this work   shows that this idea can also  be used to prove Uniqueness and   Lipschitz stability results  for the inverse optical tomography problem.

Lipschitz stability   estimates for inverse and ill-posed problems   are usually based on  constructive approaches involving Carleman estimates or quantitative estimates of  unique continuation \cite{alessandrini1996determining,alessandrini2018lipschitz,
bellassoued2006lipschitz,bellassoued2007lipschitz,imanuvilov1998lipschitz,imanuvilov2001global,kazemi1993stability}.  For some applications these constructive approaches also allowed to quantify the asymptotic behavior of the Lipschitz constant; see for instance \cite{sincich2007lipschitz}.

Our approach on proving Lipschitz stability  is relatively simple compared to  previous works. The main tools are: standard (non quantitative) unique continuation, the monotonicity result and the method of localized potentials. 

For the numerical solution, we reformulate  the inverse problem into a minimization problem using a   Kohn-Vogelius functional, and  use a quasi-Newton method which employs the analytic gradient of the cost function
 and the approximation of the inverse Hessian is updated by BFGS scheme \cite{kelley1999iterative}.
Let us stress that this numerical part  approaches the problem from a heuristic numerical side to demonstrate that useful numerical reconstructions are indeed possible. It remains a challenging open task how to unite the theoretical and numerical approaches in order to find rigorously justified reconstruction methods that work well in practically relevant settings.

Let us recall  that in   \cite{klose2002optical,klose1999iterative,klose2002optical2, klose2003quasi},  the authors propose 
  new algorithms for recovering optical  material properties.  These algorithmes are 
experimentally tested for two and three- dimensional cases. While these works, which address real-life three-dimensional problems are an important step towards practical applications, they still suffer from considerable cross-talk between absorption and scattering reconstructions. What we
mean by cross-talk is that purely scattering (or purely absorbing) inclusions are often reconstructed with unphysical absorption (or scattering) properties. This behavior is well-understood from the theoretical viewpoint: Different optical distributions inside
the medium can lead to the same measurements collected at the surface of the medium
\cite{arridge1998nonuniqueness,isakov2006inverse}. To avoid such cross-talks for our numerical  results, we   have used a   suitable  regularization techniques  for the proposed algorithm  in order to better separate and estimate simultaneously the optical properties $\sigma$   and $q$. 

The paper is organized as follows. In section 2, we introduce the forward, the Neumann-to-Dirichlet operator 
and the inverse problem. Section 3 and 4 contain the main theoretical tools for this work. Section 3 is devoted to the 
reconstruction of the absorption coefficient   assuming  that   the diffusion coefficient is known.
We show a monotonicity relation and  we prove a  Runge approximation result.  Then  we deduce the existence  of localized potentials  and  prove the global uniqueness and Lipschitz stability estimate and 
 show  how to calculate the Lipschitz stability constant for a given setting.
Section 4 is concerned with the  reconstruction of the diffusion  and the absorption coefficients simultaneously.   We first  show a monotonicity result between the diffusion and   absorption coefficients  and the Neumann-to-Dirichlet operator and  prove  the existence of localized potentials. Then, we prove the Lipschitz stability estimate. In section 5, we introduce the minimization problem, and we compute the first order optimality condition.  In  section 6, satisfactory numerical results for two-dimensional problem are presented.  The last section contains  some concluding remarks.
\section{Problem  formulation}
Let $\Omega \subset \R^d$ ($d\geq 2$),  be a bounded  domain with  smooth boundary $\partial \Omega$.
For $\sigma,  q\in L^\infty_+(\Omega)$, where $L^\infty_+$ denotes the subset of $L^\infty$-functions with positive essential infima, we consider the following  problem  with Neumann boundary data  $g\in L^{2}(\partial\Omega)$:
\begin{equation}
\label{transm}
\left\{
\eqalign{
 -\nabla\cdot(\sigma \nabla u)+qu = 0\quad \textrm{ in }  \Omega,\\
\sigma\partial_{\nu} u= g\quad \textrm{ on  }\partial \Omega,
}\right.
\end{equation}
where  $\nu$ is the  unit normal vector to  $\partial\Omega$. The  weak formulation of  problem (\ref{transm})  reads
\begin{equation}
\label{eqv}
\int_\Omega \sigma \nabla u\cdot\nabla w\,dx+\int_{\Omega} q uw\,dx=\int_{\partial\Omega}gw\,ds \textrm{ for all } w\in  H^1(\Omega).
\end{equation}
Using the Riesz representation theorem (or the Lax-Milgram-Theorem), it is easily seen that 
(\ref{eqv}) is uniquely solvable and that the solution depends continuously on $g\in L^2(\partial \Omega)$ and
 $\sigma, q\in L^\infty_+(\Omega)$.
Then, we can define the   Neumann-to-Dirichlet  operator (NtD):
\[
 \eqalign{
 \Lambda(\sigma,q) : L^{2}(\partial \Omega) &\longrightarrow   L^{2}(\partial \Omega) \cr
                                & g \longmapsto u_{|\partial \Omega}, 
                               }
 \]
The  inverse problem we  consider here,  is the following:
\begin{equation}
\label{invp}
\textrm{ \it  Find the parameters  } \sigma, q \textrm{  \it  from  the knowledge of the  map }  \Lambda(\sigma,q).  
 \end{equation}
 We will consider  diffusion and absorption  parameters   that are a priori known to belong to a finite dimensional set of piecewise-analytic functions   and that are
  bounded from above and below by a priori known constants. To that end, we first define piecewise-analyticity as in \cite[Definition 2.1]{harrach2019uniqueness} 
 \begin{definition}
 \begin{itemize}
 \item[(a)] A Subset $\Gamma\subseteq \partial\Omega$ of  the boundary of an open set $\Omega\subset \R^n$ is 
 called a smooth boundary piece if it is a $C^\infty$-surface and $\Omega$ lies on one side of it, i.e. if for each 
 $z\in \Gamma$ there exists a ball $B_\epsilon(z)$ and function $\gamma\in C^\infty(\R^{n-1}, \R)$ such that 
 \[
 \Gamma=\partial\Omega\cap B_\epsilon(z)=\left\{ x\in B_\epsilon(z):  x_n=\gamma(x_1,\ldots,x_{n-1}) \right\},
 \]
 \[
 \Omega\cap B_\epsilon(z)=\left\{ x\in B_\epsilon(z):  x_n>\gamma(x_1,\ldots,x_{n-1}) \right\}.
 \]
 \item[(b)] $\Omega$  is said to have smooth boundary if $\partial\Omega$ is a union of smooth boundary pieces.
 $\Omega$ is said to have piecewise smooth boundary  if $\partial\Omega$ is a countable union of the closures of smooth  boundary pieces. 
 \item[(c)] A function $\varphi\in L^\infty(\Omega)$ is called piecewise constant if there exists finitely many
pairwise disjoint subdomains  $\Omega_1, \ldots, \Omega_N\subset \Omega$    with piecewise smooth boundaries,
such that  $\overline{\Omega}=\overline{\Omega_1\cup, \ldots,\cup\Omega_N}$ and $\varphi|_{\Omega_i}$ is constant,   $i=1,\ldots,N$.
\item[(d)] A function $\varphi\in L^\infty(\Omega)$  is called {\it piecewise analytic} if there exists finitely many pairwise 
 disjoint subdomains $\Omega_1, \ldots, \Omega_N\subset \Omega$ with piecewise smooth boundaries, such  that 
 $\overline{\Omega}=\overline{\Omega_1\cup, \ldots,\cup\Omega_N}$, and $\varphi|_{\Omega_i}$
 has an extension which is (real-)analytic in a neighborhood of $\overline{\Omega_i}$, $i=1,\ldots,N$.
 \end{itemize}
 \end{definition}
 As mentioned  in \cite{harrach2019uniqueness}, it is not clear whether the sum of two 
 piecewise-analytic functions is always piecewise-analytic, i.e. whether the set of piecewise-analytic
functions is a vector space. However, this can be guaranteed with a slightly stronger definition
of piecewise analyticity (see \cite[lemma 1]{kohn1985determining}). Therefore, we make
the following definition.
 \begin{definition}
 A set $\mathcal{F}\subseteq L^\infty(\Omega)$  is called a  finite-dimensional subset of \\
  piecewise-analytic functions if its linear span
 \[
 \textrm{span}\; \mathcal{F}=\left\{ \sum_{j=1}^k \lambda_j f_j: k\in\mathbb{N}, \lambda_j\in\R,  f_j \in\mathcal{F}  \right\}
 \subseteq L^\infty(\Omega,
 \]
  contains only piecewise-analytic functions and dim(span $\mathcal{F})<\infty$.
  \end{definition}
 \section{Recovery of the absorption coefficient}
 In this section, we assume that   $\sigma=\sigma_0\chi_{\Omega\setminus\omega}+\sigma_1\chi_{\omega}$,  
 and $q=q\chi_{\omega}$,  where    $\sigma_0,\sigma_1$ are positive constants and $\omega\Subset \Omega$.
We aim to recover the  absorption parameter $q\in L^\infty_+(\omega)$  from  the NtD  operator  
 \[
 \Lambda(q):  L^2(\partial\Omega)\to L^2(\partial\Omega):  g\mapsto  u\vert_{\partial\Omega}.
 \]
 provided that $\sigma$ is  known.
 
 Given a finite-dimensional subset $\mathcal{F}$ of piecewise analytic functions and  two  constants 
$b > a > 0$,  we denote the set
 \[
\mathcal{F}_{[a,b]}:=\left\{ q\in \mathcal{F}:\quad   a\leq q(x)\leq b, \quad \textrm{ for all  } x\in \omega \right\}.   
\]
Throughout this paper, the domain  $\omega$,  the finite-dimensional subset $\mathcal{F}$ and the bounds
$b > a > 0$ are fixed, and the constants in the Lipschitz stability results will depend on them.
Our first results show Uniqueness and  Lipschitz stability for the inverse  absorption  problem in $\mathcal{F}_{[a,b]}$, when the complete infinite-dimensional NtD-operator is measured.\\
\noindent
The outline of this section  is the following
\begin{itemize}
\item[$(i)$] In Subsection  \ref{sec_runge}, we prove a runge approximation result   and we deduce a global  uniqueness  for determining $q$ from $\Lambda(q)$.
\item[$(ii)$] In Subsection  \ref{mon_local}, we   show  a monotonicity  and localized potentials results and we deduce a Lipschitz stability  estimate for determining $q$ from $\Lambda(q)$.
\item[$(iii)$] In Subsection \ref{quant}, we show how to quantify  the Lipschitz constant.
\end{itemize}
\subsection{Runge approximation and uniqueness.}\label{sec_runge}
We first note the following unique continuation property.     For every
open connected subset $\mathcal{O} \subset  \Omega $,  only the trivial solution of
\[
-{\rm div}(\sigma\nabla u)+ qu=0\textrm{  in } \mathcal{O},
\]
vanishes on an open subset of $\mathcal{O}$ or possesses zero Cauchy data on a smooth, open
part of $\partial \mathcal{O}$. When $\sigma$  is Lipschitz and $q$  is bounded,   this property is proven in
Miranda \cite[Thm. 19, II]{miranda2013partial}. It  can be extended to the  case of piecewise analytic $\sigma$ and  $q$ by sequentially solving Cauchy problems (see \cite{druskin1998uniqueness}). 

 We will deduce the uniqueness theorem \ref{uniqueness} from the following Runge approximation result.
\begin{theorem}[Runge approximation]\label{thm:runge}
Let $q\in L^\infty_+(\omega)$ be piecewise analytic. For all $f\in L^2(\omega)$ there exists a sequence $(g_n)_{n\in\N}\subset L^2(\partial\Omega)$ such that the corresponding solutions $u^{(g_n)}$ of (\ref{transm}) with boundary data $g_n$, $n\in \N$, fulfill
\[
u^{(g_n)}|_\omega\to f \quad \textrm{ in } L^2(\omega).
\]
\end{theorem}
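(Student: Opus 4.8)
The plan is to prove the theorem by duality, reducing the density statement to the triviality of an annihilator and then exploiting unique continuation. Since $L^2(\omega)$ is a Hilbert space, by the Hahn--Banach theorem it suffices to show that any $f\in L^2(\omega)$ satisfying
\begin{equation*}
\int_\omega f\, u^{(g)}\,dx = 0 \qquad \text{for all } g\in L^2(\partial\Omega)
\end{equation*}
must vanish; here $u^{(g)}$ denotes the solution of (\ref{transm}) with Neumann data $g$. The idea is to read this orthogonality condition through a suitably chosen auxiliary (adjoint) boundary value problem whose boundary trace encodes the pairing against all admissible $g$.

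To this end I would introduce $w\in H^1(\Omega)$ as the unique weak solution of the self-adjoint auxiliary problem
\begin{equation*}
-\nabla\cdot(\sigma\nabla w) + q\,w = f\chi_\omega \ \text{ in } \Omega, \qquad \sigma\partial_\nu w = 0 \ \text{ on } \partial\Omega,
\end{equation*}
whose well-posedness follows exactly as for (\ref{eqv}) from Lax--Milgram, the bilinear form being coercive on $H^1(\Omega)$ because $\sigma$ is bounded below and $q\ge a>0$ on $\omega$ (a Poincar\'e inequality turns control of $\nabla w$ on $\Omega$ and of $w$ on $\omega$ into control of $\|w\|_{H^1(\Omega)}$). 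Testing the weak form of this problem with $u^{(g)}$ and, symmetrically, the weak form (\ref{eqv}) of $u^{(g)}$ with $w$, the two interior bilinear expressions coincide, which yields the key identity
\begin{equation*}
\int_\omega f\, u^{(g)}\,dx = \int_{\partial\Omega} g\, w|_{\partial\Omega}\,ds \qquad \text{for all } g\in L^2(\partial\Omega).
\end{equation*}
The assumed orthogonality therefore forces $w|_{\partial\Omega}=0$, so that $w$ carries simultaneously vanishing Dirichlet data and vanishing conormal (Neumann) data on $\partial\Omega$.

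Finally I would invoke the unique continuation property recalled just above the theorem. On the connected set $\Omega\setminus\overline{\omega}$, which abuts $\partial\Omega$ since $\omega\Subset\Omega$, the absorption vanishes and $\sigma=\sigma_0$, so $w$ solves $\nabla\cdot(\sigma_0\nabla w)=0$ there with zero Cauchy data on the boundary piece $\partial\Omega$; unique continuation then gives $w\equiv 0$ in $\Omega\setminus\overline{\omega}$. Matching interior and exterior traces across $\partial\omega$ through the natural transmission conditions (continuity of $w$ and of $\sigma\partial_\nu w$) shows that $w|_\omega$ has zero Cauchy data on $\partial\omega$, and reinserting this into the equation $-\nabla\cdot(\sigma_1\nabla w)+q\,w=f$ satisfied in $\omega$ forces $f=0$, establishing triviality of the annihilator and hence the claimed density. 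I expect the main obstacle to lie precisely in this last continuation step: justifying unique continuation across the piecewise-analytic interface $\partial\omega$ and handling the transmission conditions rigorously, together with verifying the coercivity and well-posedness of the auxiliary Neumann problem on which the whole duality identity rests.
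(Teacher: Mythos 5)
Your strategy is the same as the paper's: you dualize the density claim, introduce the auxiliary Neumann problem with source $f\chi_\omega$ (your $w$ is exactly the paper's $v$, and your identity $\int_\omega f\,u^{(g)}\,dx=\int_{\partial\Omega}g\,w|_{\partial\Omega}\,ds$ is exactly the paper's computation of the adjoint of the operator $Af:=v|_{\partial\Omega}$), and then use unique continuation in the connected set $\Omega\setminus\overline{\omega}$ to conclude that $w$ vanishes there and has zero Cauchy data on $\partial\omega$. Up to that point your argument matches the paper's step for step, and the well-posedness and coercivity issues you worry about are unproblematic.

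The genuine gap is your last deduction. From ``$w$ has zero Cauchy data on $\partial\omega$ and solves $-\nabla\cdot(\sigma_1\nabla w)+qw=f$ in $\omega$'' one cannot conclude $f=0$: for any nonzero $\phi\in C_c^\infty(\omega)$, the function $w:=\phi$ (extended by zero) has zero Cauchy data on $\partial\omega$, solves the interior equation with right-hand side $f:=-\sigma_1\Delta\phi+q\phi\neq0$, and indeed solves the full auxiliary Neumann problem on $\Omega$ with $w|_{\partial\Omega}=0$. Equivalently, every solution $u^{(g)}$ of (\ref{transm}) satisfies the homogeneous equation $-\sigma_1\Delta u^{(g)}+qu^{(g)}=0$ inside $\omega$, so $\int_\omega f\,u^{(g)}\,dx=\int_\omega\phi\bigl(-\sigma_1\Delta u^{(g)}+qu^{(g)}\bigr)\,dx=0$ for every $g$; the annihilator you are trying to show is trivial therefore contains the infinite-dimensional set $\{-\sigma_1\Delta\phi+q\phi:\ \phi\in C_c^\infty(\omega)\}$, and zero Cauchy data on $\partial\omega$ only tells you that $w$ is supported in $\overline{\omega}$, which constrains $f$ not at all. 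So the obstacle is not, as you suggest at the end, the rigour of unique continuation across the interface or the transmission conditions; it is that $L^2(\omega)$-density of restrictions of global solutions is structurally obstructed by the differential equation those restrictions all satisfy in the interior of $\omega$. Note that the paper's own proof negotiates exactly this step by asserting that $v|_\omega$ solves the homogeneous equation in $\omega$ with zero Dirichlet data (whence $v|_\omega=0$ and then $f=0$); you should test that assertion against the weak formulation (\ref{eq:runge_v}), whose source term lives precisely on $\omega$ --- the interior equation is inhomogeneous, exactly as you wrote it, so the paper's route does not repair your final step either.
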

\begin{proof}
We introduce the operator
\[
A:  L^2(\omega)\to  L^2(\partial\Omega), \quad f\mapsto A f:= v_{|\partial\Omega}, 
\]
where $v\in H^1(\Omega)$  solves 
\begin{equation}\label{eq:runge_v}
\int_\Omega\sigma\nabla v\cdot\nabla w\,dx+\int_\omega q v w\,dx=\int_{\omega}f w\,dx\quad \textrm{ for all } w\in H^1(\Omega).
\end{equation}
Let  $g\in L^2(\partial \Omega)$ and $u\in H^1(\Omega)$ be the corresponding solution  of problem (\ref{transm}).  Then the adjoint operator of  $A$ is characterized by 
\begin{equation}
\eqalign{
 \int_{\omega} \left( A^* g \right) f \,dx = \int_{\partial \Omega} \left( A f \right) g \,ds = \int_{\partial \Omega} v g \,ds 
=\int_\Omega \sigma\nabla u\cdot\nabla v\,dx+\int_\omega q uv\,dx\cr
=\int_{\omega}f u\,dx, \quad \textrm{ for all } f\in L^2(\omega),
}
\end{equation}
which shows that $A^*:\ L^2(\partial \Omega)\to L^2(\omega)$ fulfills 
$A^* g=u|_{\omega}$. The assertion follows if we can show that $A^*$ has dense range, which is equivalent
to $A$ being injective.

To prove this, let $v|_{\partial \Omega}=Af=0$ with $v\in H^1(\Omega)$ solving (\ref{eq:runge_v}).
Since (\ref{eq:runge_v}) also implies that $\sigma\partial_\nu v|_{\partial \Omega}=0$, and $\Omega\setminus\omega$ is connected, it follows by unique continuation that $v|_{\Omega\setminus\omega}=0$ and thus $v^+|_{\partial\omega}=0$. Since $v\in H^1(\Omega)$ this also implies that
$v^-|_{\partial\omega}=0$, and together with (\ref{eq:runge_v}) we obtain that $v|_{\omega}\in H^1(\omega) $ solves
\[
-\nabla\cdot (\sigma \nabla v)+qv=0 \quad \textrm{ in } \omega,
\]
with homogeneous Dirichlet boundary data $v|_{\partial \omega}=0$. Hence, $v|_{\omega}=0$, so that $v=0$ almost everywhere in $\Omega$.
From (\ref{eq:runge_v}) it then follows that $\int_{\omega}f w\,dx=0$ for all $w\in H^1(\Omega)$ and thus $f=0$.
\end{proof}
\begin{theorem}[Global uniqueness]
\label{uniqueness}
For $q_1, q_2\in L^\infty_+(\omega)$  that are piecewise analytic, 
\[
\Lambda(q_1)=\Lambda(q_2)\quad \textrm{if and only if}\quad q_1=q_2.
\] 
\end{theorem}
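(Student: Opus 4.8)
The \emph{if} direction is immediate, so the whole content lies in the converse: assuming $\Lambda(q_1)=\Lambda(q_2)$, deduce $q_1=q_2$. The plan is to extract from the equality of the NtD-maps a family of orthogonality relations on $\omega$ and then saturate that family using the Runge approximation of Theorem~\ref{thm:runge}.

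First I would establish a bilinear identity. Fix boundary data $g_1,g_2\in L^2(\partial\Omega)$ and let $u_1,u_2\in H^1(\Omega)$ be the solutions of (\ref{transm}) for the potentials $q_1,q_2$ and data $g_1,g_2$ respectively. Testing the weak formulation (\ref{eqv}) for $u_1$ with $w=u_2$, testing the one for $u_2$ with $w=u_1$, and subtracting, the symmetric diffusion term $\int_\Omega\sigma\nabla u_1\cdot\nabla u_2\,dx$ cancels and one is left with
\[
\int_\omega (q_1-q_2)\,u_1 u_2\,dx=\int_{\partial\Omega} g_1 u_2\,ds-\int_{\partial\Omega} g_2 u_1\,ds.
\]
Rewriting the boundary terms through the NtD-operator and using that $\Lambda(q_1)$ and $\Lambda(q_2)$ are self-adjoint on $L^2(\partial\Omega)$ (a symmetry that follows directly from (\ref{eqv}) by testing each solution against the other), the right-hand side becomes $\langle(\Lambda(q_2)-\Lambda(q_1))g_1,g_2\rangle$. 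Under the hypothesis $\Lambda(q_1)=\Lambda(q_2)$ this vanishes identically, so $\int_\omega (q_1-q_2)u_1 u_2\,dx=0$ for \emph{every} pair of solutions $u_1,u_2$ arising from arbitrary boundary data.

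Next I would saturate this relation. Given arbitrary $f_1,f_2\in L^2(\omega)$, Theorem~\ref{thm:runge} applied to $q_1$ and to $q_2$ \emph{separately} furnishes boundary data whose solutions satisfy $u_1|_\omega\to f_1$ and $u_2|_\omega\to f_2$ in $L^2(\omega)$. Since the $L^2(\omega)$-norms of the approximating solutions stay bounded, their products converge to $f_1 f_2$ in $L^1(\omega)$, and as $q_1-q_2\in L^\infty(\omega)$ the integral passes to the limit, giving $\int_\omega (q_1-q_2)f_1 f_2\,dx=0$ for all $f_1,f_2\in L^2(\omega)$. Choosing $f_1\equiv 1$ and $f_2=h$ arbitrary yields $\int_\omega (q_1-q_2)h\,dx=0$ for every $h\in L^2(\omega)$, whence $q_1=q_2$ almost everywhere in $\omega$.

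The derivation of the bilinear identity and the final density step are both standard; the one place that deserves genuine care is the passage to the limit in the products $u_1 u_2$, where $L^2$-convergence of each factor only yields $L^1$-convergence of the product, so one must invoke the uniform bound on $\norm{u_i|_\omega}_{L^2(\omega)}$ coming from the convergent sequences rather than treating the product factor-by-factor. The essential mathematical input, however, is entirely contained in the Runge approximation of Theorem~\ref{thm:runge}; everything else is bookkeeping.
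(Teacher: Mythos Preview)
Your proof is correct and follows essentially the same approach as the paper: derive the bilinear identity $\int_\omega(q_1-q_2)u_1u_2\,dx=\langle(\Lambda(q_2)-\Lambda(q_1))g_1,g_2\rangle$, then use the Runge approximation of Theorem~\ref{thm:runge} to saturate it. The only cosmetic difference is that the paper applies density sequentially (first in $u_2$ to get $(q_1-q_2)u_1^h=0$, then in $u_1$), whereas you pass to the limit in both factors at once via the $L^1$-convergence of the product; both routes are standard and equivalent.
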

\begin{proof}
For absorption  parameters $q_1, q_2\in L^\infty_+(\omega)$ and Neumann data $g,h\in L^2(\partial \Omega)$ we denote the
corresponding solutions of (\ref{transm}) by $u_{1}^g$, $u_1^h$, $u_2^g$, and $u_2^h$ respectively. 
The variational formulation (\ref{eqv}) yields the orthogonality relation
\[
\eqalign{
 &\lefteqn{\int_{\partial \Omega} h \left( \Lambda(q_2)-\Lambda(q_1)\right) g \, ds}\cr
&= \int_{\partial \Omega} h \Lambda(q_2) g \, ds  - \int_{\partial \Omega} g \Lambda(q_1) h \, ds
= \int_{\partial \Omega} h u_2^g \, ds  - \int_{\partial \Omega} g u_1^h \, ds\cr
&= \int_\Omega\sigma\nabla u_1^h \cdot\nabla u_2^g \,dx+\int_\omega q_1 u_1^h u_2^g\, dx
- \left( \int_\Omega\sigma\nabla u_2^g \cdot\nabla u_1^h \,dx+\int_\omega q_2 u_2^g u_1^h\, dx\right)\cr
&= \int_\omega (q_1-q_2) u_1^h u_2^g\, dx. 
}
\]
This shows that $\Lambda(q_1)=\Lambda(q_2)$ implies that
\[
\int_\omega (q_1-q_2) u_1^h u_2^g\, dx=0,  \quad \textrm{ for all } g,h\in L^2(\partial \Omega).
\]
Using the Runge approximation result in theorem~\ref{thm:runge}, this yields that 
$(q_1-q_2) u_1^h=0$ (a.e.) in $\omega$ for all $h\in L^2(\partial \Omega)$, and using theorem~\ref{thm:runge} again, this implies
$q_1=q_2$. 
\end{proof}
\subsection{Monotonicity, localized potentials and Lipschitz stability}\label{mon_local}
To prove the Lipschitz stability result in Theorem \ref{stability2}, we first show a monotonicity estimate between the absorption coefficient and the Neumann-to-Dirichlet operator, and deduce the existence of localized potentials from the Runge approximation result.
\begin{lemma}[Monotonicity estimate]
\label{mono}
Let $q_1, q_2\in L^\infty_+(\omega)$ be two  absorption parameters, let $g\in L^2(\partial\Omega)$ be an applied boundary current, and let $u_2:=u^{g}_{q_2}\in H^1(\Omega)$ solve (\ref{transm}) for the boundary current $g$ and the absorption parameter $q_2$. Then
\begin{equation}
\label{eqmono}
 \int_\omega(q_1-q_2) u_2^2\,dx\geq \int_{\partial \Omega} g \left(\Lambda(q_2)-\Lambda(q_1)\right) g\, ds\geq 
\int_\omega\left(q_2-\frac{q^2_2}{q_1}  \right)u^2_2\,dx.
\end{equation}
\end{lemma}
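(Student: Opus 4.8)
The plan is to derive both inequalities in (\ref{eqmono}) from two ``completing the square'' identities, using only the weak formulation (\ref{eqv}) together with the energy identity obtained by testing it with the solution itself. First I would fix notation: write $a_q(v,w):=\int_\Omega\sigma\nabla v\cdot\nabla w\,dx+\int_\omega q\,vw\,dx$ for the symmetric bilinear form from (\ref{eqv}), and let $u_1:=u^{g}_{q_1}$ denote the solution for $q_1$, while $u_2:=u^{g}_{q_2}$ is the given solution. Testing (\ref{eqv}) with $w=u_j$ gives the energy identity $a_{q_j}(u_j,u_j)=\int_{\partial\Omega}g\,u_j\,ds=\int_{\partial\Omega}g\,\Lambda(q_j)g\,ds$, and testing the equation for $u_1$ against $u_2$ (resp. for $u_2$ against $u_1$) gives the cross relations $a_{q_1}(u_1,u_2)=\int_{\partial\Omega}g\,\Lambda(q_2)g\,ds$ and $a_{q_2}(u_2,u_1)=\int_{\partial\Omega}g\,\Lambda(q_1)g\,ds$. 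With $\delta:=\int_{\partial\Omega}g\left(\Lambda(q_2)-\Lambda(q_1)\right)g\,ds=a_{q_2}(u_2,u_2)-a_{q_1}(u_1,u_1)$, the task is to sandwich $\delta$.

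For the left (upper) inequality I would expand the nonnegative quantity $a_{q_1}(u_1-u_2,u_1-u_2)\ge 0$. Since $\sigma$ is common to $q_1$ and $q_2$, the gradient parts cancel in differences, so $a_{q_1}(u_2,u_2)=\int_{\partial\Omega}g\,\Lambda(q_2)g\,ds+\int_\omega(q_1-q_2)u_2^2\,dx$; combining this with the cross relation $a_{q_1}(u_1,u_2)=\int_{\partial\Omega}g\,\Lambda(q_2)g\,ds$ turns $a_{q_1}(u_1-u_2,u_1-u_2)\ge 0$ into $0\le-\delta+\int_\omega(q_1-q_2)u_2^2\,dx$, which is exactly the left estimate in (\ref{eqmono}).

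The right (lower) inequality is the crux, and here the naive choice fails: expanding $a_{q_2}(u_2-u_1,u_2-u_1)\ge 0$ only reproduces the $u_1$-based bound $\delta\ge\int_\omega(q_1-q_2)u_1^2\,dx$, which is not what is claimed. Instead I would use the complementary, flux form of the energy. Since $u_j$ solves $\nabla\cdot(\sigma\nabla u_j)=q_ju_j$ in $\omega$, $\nabla\cdot(\sigma\nabla u_j)=0$ in $\Omega\setminus\omega$, and $\sigma\partial_\nu u_j=g$, I would expand the nonnegative quantity
\[
0\le\int_\Omega\sigma\left|\nabla(u_2-u_1)\right|^2\,dx+\int_\omega\frac{1}{q_1}\left(q_2u_2-q_1u_1\right)^2\,dx.
\]
The weight $1/q_1$ on the zeroth-order term is the decisive choice, as it is what generates the $q_2^2/q_1$ contribution. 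Expanding and integrating by parts, the mixed term collects into $a_{q_2}(u_2,u_1)=\int_{\partial\Omega}g\,\Lambda(q_1)g\,ds$ and the pure-$u_1$ terms collect into $a_{q_1}(u_1,u_1)=\int_{\partial\Omega}g\,\Lambda(q_1)g\,ds$, yielding $\int_{\partial\Omega}g\,\Lambda(q_1)g\,ds\le\int_\Omega\sigma|\nabla u_2|^2\,dx+\int_\omega\frac{q_2^2}{q_1}u_2^2\,dx$. Subtracting this from the energy identity $\int_{\partial\Omega}g\,\Lambda(q_2)g\,ds=\int_\Omega\sigma|\nabla u_2|^2\,dx+\int_\omega q_2u_2^2\,dx$ gives $\delta\ge\int_\omega\left(q_2-\frac{q_2^2}{q_1}\right)u_2^2\,dx$, the right estimate in (\ref{eqmono}).

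I expect the main obstacle to be recognizing the correct weighted square for the lower bound, rather than any analytic difficulty; this step amounts to making the complementary (dual) energy principle explicit, and once the flux identity $\nabla\cdot(\sigma\nabla u_j)=q_ju_j$ in $\omega$ is in hand, the remaining computation is routine integration by parts. As a consistency check one verifies the pointwise relation $q_1-q_2\ge q_2-q_2^2/q_1$, whose difference equals $(q_1-q_2)^2/q_1\ge 0$, so that the two bounds in (\ref{eqmono}) are indeed compatible.
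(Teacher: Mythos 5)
Your proposal is correct and follows essentially the same route as the paper: the first inequality comes from expanding the nonnegative quantity $\int_\Omega\sigma|\nabla(u_1-u_2)|^2\,dx+\int_\omega q_1(u_1-u_2)^2\,dx$ together with the cross relations from the weak formulation, and your weighted square $\int_\omega\frac{1}{q_1}(q_2u_2-q_1u_1)^2\,dx$ is identically the paper's completed square $\int_\omega q_1\bigl(u_1-\frac{q_2}{q_1}u_2\bigr)^2\,dx$, so the second inequality is obtained by the same algebra read in the other direction. No gaps.
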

\begin{proof}
Let $u_1:=u^g_{q_1}\in H^1(\Omega)$. From the variational equation,  we deduce 
\[
 \int_\Omega\sigma \nabla u_1\cdot\nabla u_2\,dx+\int_\omega q_1 u_1u_2\,dx=
\int_{\partial \Omega} g \Lambda(q_2)g\, ds=\int_\Omega\sigma |\nabla u_2|^2\,dx+\int_\omega q_2 u_2^2\,dx.
\]
Thus 
\[
\eqalign{
 \int_\Omega\sigma |\nabla(u_1-u_2)|^2\,dx&+\int_\omega q_1(u_1-u_2)^2\,dx \cr
&=\int_\Omega\sigma |\nabla u_1|^2\,dx+ \int_\omega q_1u_1^2\,dx
+\int_\Omega\sigma |\nabla u_2|^2\,dx+ \int_\omega q_1u_2^2\,dx\cr
&-2\int_\Omega\sigma |\nabla u_2|^2\,dx-2\int_\omega q_2u^2_2\,dx\cr
&=\int_{\partial \Omega} g \Lambda(q_1)g\, ds
-\int_{\partial \Omega} g \Lambda(q_2)g\, ds
+\int_\omega (q_1-q_2)u^2_2\,dx.
}
\]
Since the left-hand side is nonnegative, the first asserted inequality follows. 
\noindent
Interchanging $q_1$ and $ q_2$, we  get 
\[
\eqalign{
 &\int_{\partial \Omega} g \Lambda(q_2)g\, ds-\int_{\partial \Omega} g \Lambda(q_1)g\,ds\cr
&=\int_\Omega\sigma |\nabla(u_2-u_1)|^2\,dx+\int_\omega q_2(u_2-u_1)^2\,dx
-\int_\omega (q_2-q_1)u^2_1\,dx\cr
&=\int_\Omega\sigma |\nabla(u_2-u_1)|^2\,dx+\int_\omega\left(q_2u^2_2-2q_2u_1u_2+q_1u^2_1\right)\,dx\cr
&=\int_\Omega\sigma |\nabla(u_2-u_1)|^2\,dx+\int_\omega q_1\left(u_1-\frac{q_2}{q_1}u_2 \right)^2\,ds+\int_\omega\left(q_2-\frac{q^2_2}{q_1} \right)u^2_2\,dx.
}
\]
Since the first two integrals on the right-hand side are non negative, the second asserted inequality follows. 
\end{proof}
\noindent
Note that we call Lemma \ref{mono} a monotonicity estimate  because of the following  corollary:
\begin{corollary}[Monotonicity]
For two absorption parameters $q_1, q_2 \in L^\infty_+(\omega)$
\begin{equation}
 q_1\leq q_2 \quad \textrm{implies }\quad \Lambda(q_1)\geq \Lambda(q_2)\quad \textrm{in the sense of quadratic forms}.
\end{equation}
\end{corollary}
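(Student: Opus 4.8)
The plan is to read the claim directly off the first inequality in the monotonicity estimate of Lemma~\ref{mono}, so that no new machinery is required. Recall that $\Lambda(q_1)\geq\Lambda(q_2)$ in the sense of quadratic forms means precisely that
\[
\int_{\partial\Omega} g\left(\Lambda(q_2)-\Lambda(q_1)\right)g\,ds\leq 0\quad\textrm{for all }g\in L^2(\partial\Omega).
\]
First I would fix an arbitrary $g\in L^2(\partial\Omega)$ and let $u_2:=u^g_{q_2}\in H^1(\Omega)$ be the associated solution of (\ref{transm}) for the parameter $q_2$. The left-hand inequality in (\ref{eqmono}) then gives
\[
\int_{\partial\Omega} g\left(\Lambda(q_2)-\Lambda(q_1)\right)g\,ds\leq \int_\omega(q_1-q_2)\,u_2^2\,dx.
\]

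Next I would invoke the hypothesis $q_1\leq q_2$ pointwise almost everywhere in $\omega$, which forces the integrand $(q_1-q_2)\,u_2^2\leq 0$ almost everywhere, and hence $\int_\omega(q_1-q_2)\,u_2^2\,dx\leq 0$. Chaining this with the previous display yields the desired sign for this particular $g$, and since $g\in L^2(\partial\Omega)$ was arbitrary, this is exactly the assertion that $\Lambda(q_1)\geq\Lambda(q_2)$ as quadratic forms. There is essentially no obstacle here: the entire content of the corollary is already contained in the first inequality of Lemma~\ref{mono}, and the only point requiring care is to keep the sign conventions consistent, namely to recognize that the ordering $\Lambda(q_1)\geq\Lambda(q_2)$ corresponds to the nonpositivity of the quadratic form associated with $\Lambda(q_2)-\Lambda(q_1)$ as it appears in (\ref{eqmono}).
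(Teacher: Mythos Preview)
Your proof is correct and is exactly the intended argument: the paper states this corollary without an explicit proof, presenting it as an immediate consequence of the first inequality in Lemma~\ref{mono}, which is precisely what you use. The only content is the sign bookkeeping you already flagged.
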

\noindent
Let us stress, however, that Lemma \ref{mono} holds for any $q_1, q_2\in L^\infty_+(\omega)$ and does not require $q_1\leq q_2$ or $q_1\geq q_2$. \\
\noindent
The existence of localized potentials  follows from the Runge approximation property as in \cite[Lemma~4.3]{harrach2019global}.
\begin{lemma}[Localized potentials]\label{lemma:locpot1}
Let $ q \in L^\infty_+(\omega)$ be  piecewise analytic, and let  $\mathcal{O}\subseteq \omega$ be a subset with positive boundary measure. 
Then there exists a sequence $(g_n)_{n\in\N}\subset L^2(\partial\Omega)$ such that the corresponding solutions $u^{(g_n)}$ of (\ref{transm}) fulfill
\[
\lim_{n\to \infty}\int_{\mathcal{O}} |u^{(g_n)}|^2\,ds=\infty \quad \textrm{ and }\quad
\lim_{n\to \infty}\int_{\omega\setminus \mathcal{O}} |u^{(g_n)}|^2\,ds=0.
\]
\end{lemma}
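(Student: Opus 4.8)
The plan is to obtain the localized potentials directly from the Runge approximation theorem (Theorem~\ref{thm:runge}), without invoking any functional-analytic range-duality machinery. The point is that the two quantities to be controlled, $\int_{\mathcal O}|u^{(g_n)}|^2$ and $\int_{\omega\setminus\mathcal O}|u^{(g_n)}|^2$, are both dominated by the full $L^2(\omega)$-norm, and Theorem~\ref{thm:runge} asserts that the linear subspace $\{u^{(g)}|_\omega : g\in L^2(\partial\Omega)\}$ is dense in $L^2(\omega)$. Since density lets me approximate arbitrary targets --- in particular targets of arbitrarily large amplitude supported in $\mathcal O$ --- I can force blow-up on $\mathcal O$ while keeping the mass on $\omega\setminus\mathcal O$ as small as I like.

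Concretely, I would argue as follows. Note first that $\chi_{\mathcal O}\in L^2(\omega)$ and, since $\mathcal O$ has positive measure, $\|\chi_{\mathcal O}\|_{L^2(\omega)}^2=|\mathcal O|>0$. For each $n\in\N$ apply Theorem~\ref{thm:runge} to the target $f_n:=n\,\chi_{\mathcal O}\in L^2(\omega)$: by the density it provides, there is a Neumann datum $g_n\in L^2(\partial\Omega)$ whose solution satisfies $\|u^{(g_n)}|_\omega-n\,\chi_{\mathcal O}\|_{L^2(\omega)}<\tfrac1n$. Splitting $\omega=\mathcal O\cup(\omega\setminus\mathcal O)$ and using that $f_n$ vanishes on $\omega\setminus\mathcal O$, the triangle inequality gives, on the complement, $\|u^{(g_n)}\|_{L^2(\omega\setminus\mathcal O)}\le\|u^{(g_n)}-f_n\|_{L^2(\omega)}<\tfrac1n\to0$, and, on $\mathcal O$, $\|u^{(g_n)}\|_{L^2(\mathcal O)}\ge \|f_n\|_{L^2(\mathcal O)}-\|u^{(g_n)}-f_n\|_{L^2(\omega)}\ge n\sqrt{|\mathcal O|}-\tfrac1n\to\infty$. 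Squaring these two estimates yields exactly the two asserted limits.

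The hard part here is essentially nonexistent beyond the bookkeeping of the triangle inequality; the whole content has been front-loaded into the Runge approximation theorem, which in turn rested on the unique continuation property and the duality computation already carried out in its proof. The two facts I must not lose sight of are that $\mathcal O$ has strictly positive measure (so that the lower bound on $\mathcal O$ really diverges) and that the target $f_n$ is supported in $\mathcal O$ (so that its contribution on $\omega\setminus\mathcal O$ is identically zero, which is what drives the second limit to $0$).

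It is worth contrasting this with the more elaborate proof used for gradient-type localized potentials (as in \cite{harrach2019global}), where one cannot reduce to a single dense subspace and instead shows, via a range characterization $\mathcal R(L_{\mathcal O}^*)\not\subseteq\mathcal R(L_{\omega\setminus\mathcal O}^*)$ together with a unique-continuation argument across the common interface, that no estimate $\|u\|_{L^2(\mathcal O)}\le C\|u\|_{L^2(\omega\setminus\mathcal O)}$ can hold uniformly in $g$. That route would be the fallback if the diagonal/indicator argument failed, but here --- because the monotonicity estimate of Lemma~\ref{mono} only involves the zeroth-order quantity $\int_\omega(q_1-q_2)u^2\,dx$ and no gradient --- the plain $L^2(\omega)$-density of Theorem~\ref{thm:runge} is already sufficient, and I would keep the proof at this elementary level.
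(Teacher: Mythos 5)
Your proof is correct and follows essentially the same route as the paper: both arguments obtain the localized potentials purely from the $L^2(\omega)$-density guaranteed by the Runge approximation theorem applied to an indicator-type target on $\mathcal O$. The only difference is cosmetic --- the paper approximates the fixed normalized target $\chi_{\mathcal O}/|\mathcal O|^{1/2}$ and then rescales the boundary data $g_n$ by $\bigl(\int_{\omega\setminus\mathcal O}|u^{(\tilde g_n)}|^2\,dx\bigr)^{1/4}$ to force the blow-up, whereas you rescale the target to $n\,\chi_{\mathcal O}$ and extract a diagonal sequence; both are valid.
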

\begin{proof}
Using the Runge approximation property in Theorem~\ref{thm:runge}, we find a sequence $\tilde g_n\in L^2(\partial \Omega)$
so that the corresponding solutions $u^{(\tilde g_n)}$ fulfill
\[
u^{(\tilde g_n)}|_\omega \to \frac{\chi_{\mathcal{O}}}{\left(\int_{\mathcal{O}}\,dx\right)^{1/2}}\quad \textrm{ in } L^2(\omega).
\]
Hence
\begin{equation*}
\lim_{n\to \infty}\int_{\mathcal{O}} |u^{(\tilde g_n)}|^2\,dx=1  \quad \textrm{ and }\quad
\lim_{n\to \infty}\int_{\omega\setminus \mathcal{O}} |u^{(\tilde g_n)}|^2\,dx=0,
\end{equation*}
so that 
\[
g_n:=\frac{\tilde g_n}{\left(\int_{\omega\setminus \mathcal{O}}\tilde u_n^2\,dx\right)^{1/4}}, 
\]
has the desired property
\[
\eqalign{
\lim_{n\to \infty}\int_{\mathcal O} |u^{(g_n)}|^2\,dx &= \lim_{n\to \infty} \frac{\int_{\mathcal O} |u^{(\tilde g_n)}|^2\,dx}{\left(\int_{\omega\setminus \mathcal O}
|u^{(\tilde g_n)}|^2\,dx\right)^{1/2}}=\infty,\cr
\lim_{n\to \infty}\int_{\omega\setminus \mathcal O} |u^{(g_n)}|^2\,dx &= \lim_{n\to \infty} \left(\int_{\omega\setminus \mathcal O}|u^{(\tilde g_n)}|^2\,dx\right)^{1/2} = 0.
}
\]
\end{proof}
 \begin{theorem}[Lipschitz stability]
\label{stability}
There exists a constant $C>0$ such that    
\[
\| q_1-q_2 \|_{L^{\infty}(\omega)}\leq   C\| \Lambda(q_1)-\Lambda(q_2) \|_{\mathcal L(L^2(\partial \Omega))},
\quad  \textrm{ for all } q_1, q_2\in \mathcal{F}_{[a,b]}.
\]
\end{theorem}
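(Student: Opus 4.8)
\emph{Proof proposal.} The plan is to argue by contradiction, exploiting the finite dimensionality of $\mathcal{F}$ together with the two-sided monotonicity bound of Lemma \ref{mono} and the localized potentials of Lemma \ref{lemma:locpot1}. Suppose the estimate fails for every constant. Then there are sequences $q_1^{(k)}, q_2^{(k)} \in \mathcal{F}_{[a,b]}$ with
\[
\|q_1^{(k)} - q_2^{(k)}\|_{L^\infty(\omega)} > k\,\|\Lambda(q_1^{(k)}) - \Lambda(q_2^{(k)})\|_{\mathcal{L}(L^2(\partial\Omega))}.
\]
Writing $\epsilon_k := \|q_1^{(k)} - q_2^{(k)}\|_{L^\infty(\omega)} \in (0,\,b-a]$ and $d^{(k)} := (q_1^{(k)} - q_2^{(k)})/\epsilon_k$, so that $\|d^{(k)}\|_{L^\infty(\omega)} = 1$, I would use that $\mathcal{F}_{[a,b]}$ is a bounded, closed subset of the finite-dimensional space $\mathrm{span}\,\mathcal{F}$ (on which all norms are equivalent and bounded sets are relatively compact) to pass to subsequences with $q_1^{(k)} \to q_1$, $q_2^{(k)} \to q_2$ in $\mathcal{F}_{[a,b]}$ and $d^{(k)} \to d$, $\|d\|_{L^\infty(\omega)} = 1$. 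Since $\epsilon_k \le b-a$, the defining inequality forces $\|\Lambda(q_1^{(k)}) - \Lambda(q_2^{(k)})\| < (b-a)/k \to 0$; as $\Lambda$ is continuous in the coefficient (by the continuous dependence of the solution of (\ref{eqv}) on $q$), this gives $\Lambda(q_1) = \Lambda(q_2)$, and Theorem \ref{uniqueness} yields $q_1 = q_2 =: q^\ast$.

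The core step is to feed the monotonicity estimate into this limit. Fix $g \in L^2(\partial\Omega)$ and write $u_2^{(k)} := u^g_{q_2^{(k)}}$. Dividing the chain (\ref{eqmono}) by $\epsilon_k > 0$ and using $q_2 - q_2^2/q_1 = (q_2/q_1)(q_1 - q_2)$ gives
\[
\int_\omega \frac{q_2^{(k)}}{q_1^{(k)}}\, d^{(k)}\, (u_2^{(k)})^2\,dx \;\le\; \frac{1}{\epsilon_k}\int_{\partial\Omega} g\,(\Lambda(q_2^{(k)}) - \Lambda(q_1^{(k)}))\,g\,ds \;\le\; \int_\omega d^{(k)}\,(u_2^{(k)})^2\,dx .
\]
The middle term is bounded in absolute value by $\tfrac{1}{\epsilon_k}\|\Lambda(q_1^{(k)}) - \Lambda(q_2^{(k)})\|\,\|g\|_{L^2(\partial\Omega)}^2 < \tfrac{1}{k}\|g\|_{L^2(\partial\Omega)}^2 \to 0$. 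On the two outer terms I would use that $q_2^{(k)} \to q^\ast$ implies $u_2^{(k)} \to u^g_{q^\ast}$ in $H^1(\Omega)$ (hence in $L^2(\omega)$), that $d^{(k)} \to d$ in $L^\infty(\omega)$, and that $q_2^{(k)}/q_1^{(k)} \to 1$; both outer integrals then converge to $\int_\omega d\,(u^g_{q^\ast})^2\,dx$. Squeezing yields
\[
\int_\omega d\,(u^g_{q^\ast})^2\,dx = 0 \qquad \textrm{for every } g \in L^2(\partial\Omega).
\]

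Finally I would contradict $\|d\|_{L^\infty(\omega)} = 1$ using localized potentials. Since $d \in \mathrm{span}\,\mathcal{F}$ is piecewise analytic and not identically zero, it is nonzero, hence of one constant sign, on some open set $\mathcal{O} \subseteq \omega$ on which $|d| \ge \delta > 0$; replacing $d$ by $-d$ if necessary, assume $d \ge \delta$ on $\mathcal{O}$. Applying Lemma \ref{lemma:locpot1} to $q^\ast$ and $\mathcal{O}$ produces currents $g_n$ with $\int_{\mathcal{O}}(u^{g_n}_{q^\ast})^2\,dx \to \infty$ and $\int_{\omega\setminus\mathcal{O}}(u^{g_n}_{q^\ast})^2\,dx \to 0$, so that
\[
\int_\omega d\,(u^{g_n}_{q^\ast})^2\,dx \ge \delta\int_{\mathcal{O}}(u^{g_n}_{q^\ast})^2\,dx - \|d\|_{L^\infty(\omega)}\int_{\omega\setminus\mathcal{O}}(u^{g_n}_{q^\ast})^2\,dx \longrightarrow +\infty,
\]
contradicting the vanishing established above. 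I expect the main obstacle to be the passage to the limit in the monotonicity chain: one must justify the norm equivalence and compactness that make $\mathcal{F}_{[a,b]}$ and the direction sphere well-behaved (this is exactly where finite dimensionality of $\mathcal{F}$ is indispensable) and, more delicately, the continuous dependence $u^g_{q_2^{(k)}} \to u^g_{q^\ast}$ in $L^2(\omega)$ strong enough to pass to the limit in the quadratic terms. The order of the two limits is also essential: the identity $\int_\omega d\,(u^g_{q^\ast})^2\,dx = 0$ must first be obtained for all $g$ by letting $k \to \infty$ with $g$ fixed, and only afterwards may the localized-potential sequence $g_n$ be inserted.
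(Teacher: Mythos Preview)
Your argument is correct and complete; the squeeze obtained from the two-sided monotonicity bound (\ref{eqmono}) is a clean way to force $\int_\omega d\,(u^g_{q^\ast})^2\,dx=0$, and the localized potentials finish the job exactly as you describe. The technical points you flag (compactness in $\mathrm{span}\,\mathcal{F}$, $L^2$-convergence of $u^g_{q_2^{(k)}}$, hence $L^1$-convergence of the squares) are all routine.

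The paper proceeds differently. Rather than argue by contradiction, it uses only the \emph{first} inequality in Lemma~\ref{mono} (once as stated, once with the roles of $q_1,q_2$ interchanged) to bound
\[
\frac{\|\Lambda(q_2)-\Lambda(q_1)\|_*}{\|q_1-q_2\|_{L^\infty(\omega)}}\ \geq\ \sup_{\|g\|=1}\phi\!\left(g,\tfrac{q_1-q_2}{\|q_1-q_2\|},q_1,q_2\right),
\]
with $\phi(g,\zeta,\kappa_1,\kappa_2)=\max\{\int_\omega\zeta|u^{(g)}_{\kappa_1}|^2, \int_\omega(-\zeta)|u^{(g)}_{\kappa_2}|^2\}$, and then takes the infimum of the right-hand side over the compact set $\{\|\zeta\|_{L^\infty}=1\}\times\mathcal{F}_{[a,b]}^2$. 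Lower semicontinuity of the sup gives that the infimum is attained, and localized potentials show it is positive. So the paper never invokes Theorem~\ref{uniqueness}, never passes to a limit in the solution map, and never uses the second (nonlinear) inequality in (\ref{eqmono}). What this buys is that the auxiliary function $\phi$ and the inf--sup structure are exactly what is reused in Subsection~\ref{quant} to \emph{quantify} the Lipschitz constant; your contradiction argument, while shorter and perhaps more natural, does not produce a computable lower bound for $1/C$.
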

\begin{proof}
Let $\mathcal{F}\subset L^\infty(\omega)$ be a finite dimensional subspace  of piecewise analytic functions, $b>a>0$, and 
\[
q_1,  q_2\in \mathcal{F}_{[a,b]}=\left\{q\in \mathcal{F}:\quad   a\leq q(x)\leq b \textrm{ for  all } x\in \omega\right\}.   
\]
For the ease of notation, we write in the following
\[
\Vert q_1-q_2\Vert:=\Vert q_1-q_2\Vert_{L^\infty(\Omega)} \quad \textrm{ and } \quad
\Vert g\Vert :=\Vert g \Vert_{L^2(\partial \Omega)}.
\]

Since $\Lambda(q_1)$ and $\Lambda(q_2)$ are self-adjoint, we have that
\[
\eqalign{
\lefteqn{\norm{\Lambda(q_2)-\Lambda(q_1)}_*}\cr
&= \sup_{\norm{g}=1} \left| \int_{\partial \Omega} g \left(\Lambda(q_2)-\Lambda(q_1)\right) g\, ds \right|\cr
  &=  \sup_{\norm{g}=1} \max\left\{
\int_{\partial \Omega} g \left(\Lambda(q_2)-\Lambda(q_1)\right) g\, ds,
\int_{\partial \Omega} g \left(\Lambda(q_1)-\Lambda(q_2)\right) g\, ds
\right\}.
}
\]
Using the first inequality in the monotonicity relation (\ref{eqmono}) in Lemma \ref{mono} in its original form, and
with $q_1$ and $q_2$ interchanged, we obtain for all $g\in L^2(\partial \Omega)$
\[
\eqalign{
\int_{\partial \Omega} g \left(\Lambda(q_2)-\Lambda(q_1)\right) g\, ds &\geq 
\int_{\omega} (q_1-q_2)|u_{q_1}^{(g)}|^2\, dx,\cr
\int_{\partial \Omega} g \left(\Lambda(q_1)-\Lambda(q_2)\right) g\, ds &\geq 
\int_{\omega}(q_2-q_1)|u_{q_2}^{(g)}|^2\,dx,
}
\]
where $u_{q_1}^{(g)},u_{q_2}^{(g)}\in H^1(\Omega)$ denote the solutions of (\ref{transm}) with Neumann data $g$ and absorption parameter $q_1$ and $q_2$, resp.
Hence, for $q_1\neq q_2$, we have
\[
\frac{\| \Lambda(q_2)-\Lambda(q_1) \|_*}{\| q_1 -q_2\|}\geq  
 \sup_{\| g \|=1}\phi\left(g,\frac{q_1-q_2}{\| q_1 -q_2\|_{L^\infty(\omega)}},q_1, q_2\right),
\]
where (for $g\in L^2(\partial \Omega)$, $\zeta\in \mathcal{F}$, and $\kappa_1,\kappa_2\in \mathcal{F}_{[a,b]}$)
\begin{equation}\label{eq:Definition_Psi}
\phi\left(g,\zeta,\kappa_1,\kappa_2\right):=
\max \left\{ \int_{\omega} \zeta |u_{\kappa_1}^{(g)}|^2 \,dx, \int_{\omega} (-\zeta) |u_{\kappa_2}^{(g)}|^2 \,dx  \right\}.
\end{equation}
Introduce the compact set
\begin{equation}\label{eq:Definition_CC}
\mathcal{C}=\left\{ \zeta\in \textrm{ span} \;\mathcal{F}:\quad \| \zeta \|_{L^\infty(\omega)}=1 \right\}.
\end{equation}
Then, we have
\begin{equation}\label{eq:stability_infsup1}
\eqalign{
\frac{\Vert\Lambda(q_2)-\Lambda(q_1) \Vert_*}{\Vert q_1 -q_2\Vert}&\geq \sup_{\Vert g \Vert=1} \phi(g,\zeta,\kappa_1,\kappa_2)\cr
&\geq \inf_{\substack{\zeta\in \mathcal{C}\cr \kappa_1,\kappa_2\in\mathcal{F}_{[a,b]}}} \sup_{\Vert g \Vert=1} \phi(g,\zeta,\kappa_1,\kappa_2).
}
\end{equation}
The assertion of Theorem \ref{stability} follows if we can show that the right hand side of (\ref{eq:stability_infsup1})
is positive. Since $\phi$ is continuous, the function
\[
(\zeta,\kappa_1,\kappa_2)\mapsto \sup_{\| g \|=1} \phi(g,\zeta,\kappa_1,\kappa_2)
\]
is semi-lower continuous, so that it attains its minimum on  the compact set 
$\mathcal{C}\times\mathcal{F}_{[a,b]}\times \mathcal{F}_{[a,b]}$.
Hence, to prove Theorem \ref{stability}, it suffices to show that
\[
\sup_{\| g \|=1} \phi(g,\zeta,\kappa_1,\kappa_2)>0 \quad \text{ for all } (\zeta,\kappa_1,\kappa_2)\in \mathcal{C}\times\mathcal{F}_{[a,b]}\times \mathcal{F}_{[a,b]}.
\]
To show this, let $(\zeta,\kappa_1,\kappa_2)\in \mathcal{C}\times\mathcal{F}_{[a,b]}\times \mathcal{F}_{[a,b]}$.
Since $\norm{\zeta}_{L^\infty(\omega)}=1$, there exists a subset $\mathcal O\subseteq \omega$ with positive  measure  and  $0<\Theta<1$   such that
either
\[
\text{(a)}\ \zeta(x)\geq \Theta\text{ for all } x\in \mathcal O,\quad \text{ or } \quad
\text{(b)}\ -\zeta(x)\geq \Theta\text{ for all } x\in \mathcal O.
\]
In case (a), we use the localized potentials sequence in Lemma~\ref{lemma:locpot1}, to obtain a boundary current
$\hat g\in L^2(\partial \Omega)$ with 
\[
\int_{\mathcal O} \left|u^{(\hat g)}_{\kappa_1}\right|^2\, dx \geq \frac{1}{\Theta} \quad \text{ and } \quad \int_{\omega\setminus \mathcal O} \left|u^{(\hat g)}_{\kappa_1}\right|^2\, dx \leq \frac{1}{2},
\]
so that (using again $\norm{\zeta}_{L^\infty(\omega)}=1$)
\[
\phi\left(\hat g,\zeta,\kappa_1,\kappa_2\right)\geq \int_\omega \zeta \left|u^{(\hat g)}_{\kappa_1}\right|^2\, dx \geq \Theta \int_{\mathcal O} \left|u^{(\hat g)}_{\kappa_1}\right|^2\, dx
- \int_{\omega\setminus \mathcal O} \left|u^{(\hat g)}_{\kappa_1}\right|^2\, dx \geq \frac{1}{2}.
\]
In case (b), we can analogously use a localized potentials sequence for $\kappa_2$, and 
find $\hat g\in L^2(\partial \Omega)$ with
\[
\phi\left(\hat g,\zeta,\kappa_1,\kappa_2\right)\geq \int_\omega (-\zeta) \left|u^{(\hat g)}_{\kappa_2}\right|^2\, dx \geq \Theta \int_{\mathcal O} \left|u^{(\hat g)}_{\kappa_2}\right|^2\, dx
- \int_{\omega\setminus \mathcal O} \left|u^{(\hat g)}_{\kappa_2}\right|^2\, dx \geq \frac{1}{2}.
\]
Hence, in both cases, 
\[
\sup_{\| g \|=1} \phi(g,\zeta,\kappa_1,\kappa_2)\geq  \phi\left(\frac{\hat g}{\norm{\hat g}} ,\zeta,\kappa_1,\kappa_2\right)=
\frac{1}{\norm{\hat g}^2} \phi(\hat g,\zeta,\kappa_1,\kappa_2)>0,
\]
so that Theorem \ref{stability} is proven.
\end{proof}
\subsection{Quantitative Lipschitz stability} \label{quant}
In this subsection, we  restrict ourself   to  the case where  $\mathcal{F}$  is a  set of piecewise constant  functions  
  on   a given  partition $\cup_{j=1}^N D_j=\omega$, i.e,
\[
\mathcal{F}=\left\{q(x)=\sum_{j=1}^N q_j\chi_{D_j},\quad  q_1,\ldots q_N\in\R\right\}\subset L^\infty(\omega),
\]
  and for  $0<a< b$,   $\mathcal{F}_{[a,b]}$
is the set  of $q\in \mathcal{F}$  such  that $a\leq q_j\leq  b$  for all $j=1,\ldots, N$.
The structure   assumed   for $q$  fits well in several  problems  arising in practical applications.\\
\noindent
For our   quantitative Lipschitz stability estimate,  we need a finite numbers of localized potentials  and we show how 
to  reconstruct them.
\begin{lemma}\label{existence}
Let $b>a>0$ be given constants. For $j=1,\ldots, N$ anf $k=1,\ldots,K$,  with $K=\left(\lfloor3\left( \frac{b}{a}-1 \right)\rfloor +3 \right)$, we  define the piecewise constant function $\eta^{(j,k)}\in L_{+}^\infty(\omega)$  by
\[
\eta^{(j,k)}(x)=\left\{
\eqalign{
(k+4)\frac{a}{3}\quad &\text{ if }  x\in   D_j,\\
\frac{a}{3}\quad &\text{ if }  x\in   \omega\setminus D_j.
}
\right.
\]
\begin{itemize}
\item[(i)] There exist  boundary data  $g^{(j,k)}\in \L^2(\partial\Omega)$,  so  that the corresponding  solutions
$u^{g^{(j,k)}}_{\eta^{(j,k)}}\in H^1(\Omega)$ of  (\ref{transm})  with  $g= g^{(j,k)}$  and $q=\eta^{(j,k)}$  fulfill
\begin{equation}
\label{exist_g}
 \beta^{(i,k)}:= \frac{1}{2}\int_{D_j} \vert u^{g^{(j,k)}}_{\eta^{(j,k)}}\vert^2\,dx-\left( \frac{3b}{2a} -\frac{1}{2} \right)\int_{\omega\setminus D_j}
\vert u^{g^{(j,k)}}_{\eta^{(j,k)}}\vert^2\,dx>1.
\end{equation}
\item[(ii)]
For arbitrary  $q\in \mathcal{F}_{[a,b]}$,  the solutions    $u^{g^{(j,k)}}_q\in H^1(\Omega)$ of  (\ref{transm})  with  $g= g^{(j,k)}$ fulfill
\[
 \int_{D_j}\vert u^{g^{(j,k)}}_q \vert^2\,dx  - \int_{\omega\setminus D_j}\vert u^{g^{(j,k)}}_q \vert^2\,dx \geq  \beta^{(j,k)} > 
1.
\]
\item[(iii)]   $g^{(j,k)}$ can be computed  by solving a finite number of well-posed PDEs.
\end{itemize}
\end{lemma}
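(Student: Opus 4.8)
The plan is to read off $(i)$ from the localized potentials of Lemma~\ref{lemma:locpot1}, to derive $(ii)$ from the (easy) monotonicity inequality of Lemma~\ref{mono} together with the sign pattern deliberately built into $\eta^{(j,k)}$, and to justify $(iii)$ by observing that every ingredient is the trace of the solution of a well-posed boundary value problem. For $(i)$ I would fix $j,k$ and apply Lemma~\ref{lemma:locpot1} to the admissible coefficient $\eta^{(j,k)}\in L^\infty_+(\omega)$ with $\mathcal O=D_j$. This yields a sequence $(g_n)$ with $\int_{D_j}|u^{(g_n)}_{\eta^{(j,k)}}|^2\,dx\to\infty$ and $\int_{\omega\setminus D_j}|u^{(g_n)}_{\eta^{(j,k)}}|^2\,dx\to 0$, so that the left-hand side of (\ref{exist_g}) tends to $+\infty$. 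Choosing $n$ large and setting $g^{(j,k)}:=g_n$ gives $\beta^{(j,k)}>1$; no sign information is needed here.

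For $(ii)$ write $u_q:=u^{g^{(j,k)}}_q$ and $u_\eta:=u^{g^{(j,k)}}_{\eta^{(j,k)}}$. Applying the first inequality of Lemma~\ref{mono} to the pair $(q,\eta^{(j,k)})$ in both orderings, with the common datum $g^{(j,k)}$, gives the transfer inequality
\[
\int_\omega \big(q-\eta^{(j,k)}\big)\,|u_q|^2\,dx\;\le\;\int_\omega \big(q-\eta^{(j,k)}\big)\,|u_\eta|^2\,dx .
\]
The construction of $\eta^{(j,k)}$ controls the two signs. On $\omega\setminus D_j$ one has $\eta^{(j,k)}=a/3<a\le q$, so $2a/3\le q-\eta^{(j,k)}\le b-a/3$; on $D_j$ the weight is the constant $q_j-(k+4)a/3$, which is nonpositive precisely for the index $k$ with $(k+4)a/3-q_j=:c\in[a/3,2a/3]$. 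Splitting $\omega=D_j\cup(\omega\setminus D_j)$, moving the $D_j$-terms to the right, and using these weight bounds turns the transfer inequality into
\[
\frac{2a}{3}\int_{\omega\setminus D_j}|u_q|^2\,dx\;\le\;\Big(b-\frac a3\Big)\int_{\omega\setminus D_j}|u_\eta|^2\,dx+c\Big(\int_{D_j}|u_q|^2\,dx-\int_{D_j}|u_\eta|^2\,dx\Big).
\]
Dividing by $2a/3$ produces exactly the factor $\tfrac{3}{2a}(b-\tfrac a3)=\tfrac{3b}{2a}-\tfrac12$ in front of $\int_{\omega\setminus D_j}|u_\eta|^2$ and the factor $\tfrac{3c}{2a}\in[\tfrac12,1]$ in front of the $D_j$-difference; since $\tfrac{3c}{2a}\le 1$ and $\int_{D_j}|u_q|^2\ge 0$, rearranging yields
\[
\int_{D_j}|u_q|^2\,dx-\int_{\omega\setminus D_j}|u_q|^2\,dx\;\ge\;\frac{3c}{2a}\int_{D_j}|u_\eta|^2\,dx-\Big(\frac{3b}{2a}-\frac12\Big)\int_{\omega\setminus D_j}|u_\eta|^2\,dx\;\ge\;\beta^{(j,k)},
\]
the last step using $\tfrac{3c}{2a}\ge\tfrac12$. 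The admissible window $c\in[a/3,2a/3]$ has length equal to the mesh $a/3$ of the grid $\{(k+4)a/3\}_{k=1}^{K}$, and $K=\lfloor 3(b/a-1)\rfloor+3$ is exactly the number of grid points needed so that for every $q_j\in[a,b]$ some index realizes $c\in[a/3,2a/3]$.

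For $(iii)$ I would note that $u^{g}_{\eta^{(j,k)}}$ solves the well-posed forward problem (\ref{transm}), and that the localizing sequence of Lemma~\ref{lemma:locpot1} is generated by the Runge operator $A$ and its adjoint $A^\ast$ of Theorem~\ref{thm:runge}, whose evaluations reduce to solving (\ref{transm}) and the auxiliary problem (\ref{eq:runge_v}). Because $(i)$ only asks for a strict inequality with a definite margin (the localizing quantity diverges), it suffices to approximate the interior target $\chi_{D_j}/(\int_{D_j}dx)^{1/2}$ to finite accuracy; this can be achieved by a fixed finite number of applications of $A$ and $A^\ast$ (e.g.\ a regularized normal-equation solve), i.e.\ by solving finitely many well-posed PDEs, after which $g^{(j,k)}$ already satisfies (\ref{exist_g}).

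The delicate point lies entirely in $(ii)$: the monotonicity transfers concentration only when $q-\eta^{(j,k)}$ has the correct sign on each subregion, so the estimate rests on the strict gap $q-a/3\ge 2a/3$ on $\omega\setminus D_j$ and on $(k+4)a/3\ge q_j$ on $D_j$. The main work is to verify that the grid indexed by $k=1,\dots,K$ realizes the window $c\in[a/3,2a/3]$ for every admissible $q$ and that this window reproduces the constants $\tfrac12$ and $\tfrac{3b}{2a}-\tfrac12$ uniformly in $q$.
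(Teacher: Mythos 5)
Your proposal follows essentially the same route as the paper for all three parts: (i) is read off from Lemma~\ref{lemma:locpot1} with $\mathcal O=D_j$; (ii) rests on the transfer inequality $\int_\omega(q-\eta^{(j,k)})|u_q|^2\,dx\le\int_\omega(q-\eta^{(j,k)})|u_\eta|^2\,dx$ (the paper's inequality (\ref{monob})) combined with the window $(k+4)\tfrac a3-q_j\in[\tfrac a3,\tfrac{2a}3]$ and the bound $\tfrac{2a}3\le q-\tfrac a3\le b-\tfrac a3$ on $\omega\setminus D_j$, which is exactly the paper's computation, only reorganized as a rearrangement of the transfer inequality rather than the paper's direct chain starting from $\tfrac{3}{2a}\bigl(\tfrac{2a}{3}\int_{D_j}-\tfrac{2a}{3}\int_{\omega\setminus D_j}\bigr)$; and (iii) is the paper's idea of iteratively solving the ill-posed equation $A^*g=\text{target}$ via finitely many applications of $A$ and $A^*$ (the paper uses conjugate gradients, you suggest a regularized normal-equation solve, which is an inessential difference). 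Your verification that $K=\lfloor 3(b/a-1)\rfloor+3$ suffices to cover every $q_j\in[a,b]$ is asserted rather than carried out, but the arithmetic does check out since $(K+3)\tfrac a3>b$.

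The one concrete slip is in (iii): if you approximate the \emph{normalized} target $\chi_{D_j}/(\int_{D_j}dx)^{1/2}$, then $\int_{D_j}|u_n|^2\to 1$ and $\int_{\omega\setminus D_j}|u_n|^2\to 0$, so the quantity $\beta^{(j,k)}$ in (\ref{exist_g}) tends to $\tfrac12$, which is \emph{not} greater than $1$; the divergence in Lemma~\ref{lemma:locpot1} comes only from the subsequent rescaling $g_n:=\tilde g_n/(\int_{\omega\setminus D_j}|\tilde u_n|^2)^{1/4}$, which your finite-accuracy argument does not perform. The fix is trivial and is what the paper does: aim at a scaled target $c\,\chi_{D_j}$ with $\tfrac12 c^2\int_{D_j}dx>1$ (the paper takes $3\chi_{D_j}$), or explicitly apply the rescaling after finitely many iterations; either way the count of well-posed PDE solves remains finite, so the conclusion of (iii) stands once the normalization is corrected.
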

\begin{proof}
 $(i)$ follows immediately  from the localized potentials  result in   Lemma \ref{lemma:locpot1}. To prove $(b)$,  we  need the following monotonicity  result which follows  from  Lemma \ref{mono} with $q_2=q+\delta$  and $q_1=q$
 and from using the same inequality again with interchanged roles of $q_1$  and $q_2$.
 For $g\in L^2(\partial\Omega)$, $q\in L^{\infty}_+(\omega)$,    and $\delta \in L^{\infty}_+(\omega) $   such  that 
 $(q+\delta) \in L^{\infty}_+(\omega)$,  we have
 \begin{equation}\label{monob}
 \int_\omega \delta u^g_q\,dx \geq    \int_\omega \delta u^g_{q+\delta}\,dx.
 \end{equation}
 Let $j=1,\ldots,N$ and   $q\in \mathcal{F}_{[a,b]}$.  Since $K$  fullfils  $b< (K+3)\frac{a}{3}$,  there exists $k\in \{1,\ldots, K \}$
 such   that  $q_j=  q|_{D_j}$  fullfils
 \[
(k+2 ) \frac{a}{3}\leq  q_j<  (k+3) \frac{a}{3}.
 \]
 Using the monotonicity-based inequality (\ref{monob}),   with  
 \[
  \frac{a}{3}\leq  (k+4 ) \frac{a}{3}- q_j<  \frac{2a}{3} \quad  \text{ and } \quad -b+ \frac{a}{3}\leq   \frac{a}{3}- q_j<  -\frac{2a}{3},
  \]
  we obtain 
  \[
  \eqalign{
  &  \int_{D_j}\vert u^{g^{(j,k)}}_q \vert^2\,dx  - \int_{\omega\setminus D_j}\vert u^{g^{(j,k)}}_q \vert^2\,dx \\
& = \frac{3}{2a}\left(\int_{D_j} \frac{2a}{3}\vert u^{g^{(j,k)}}_q \vert^2\,dx 
   -  \frac{2a}{3}\int_{\omega\setminus D_j}\vert u^{g^{(j,k)}}_q \vert^2\,dx\right) \\
  & \geq \frac{3}{2a}\left(\int_{D_j} \left( (k+4 ) \frac{a}{3}- q_j\right) \vert u^{g^{(j,k)}}_q \vert^2\,dx  +\int_{\omega\setminus D_j}\left(\frac{a}{3}-q_j\right)\vert u^{g^{(j,k)}}_q \vert^2\,dx\right)\\
  &= \frac{3}{2a}\left(\int_{D_j} \left(\eta^{(j,k)}- q_j\right) \vert u^{g^{(j,k)}}_q \vert^2\,dx  +\int_{\omega\setminus D_j}\left(\eta^{(j,k)}-q_j\right)\vert u^{g^{(j,k)}}_q \vert^2\,dx\right)\\
  & \geq \frac{3}{2a}\left(\int_{D_j} \left(\eta^{(j,k)}- q_j\right) \vert u^{g^{(j,k)}}_{\eta^{(j,k)}} \vert^2\,dx  +\int_{\omega\setminus D_j}\left(\eta^{(j,k)}-q_j\right)\vert u^{g^{(j,k)}}_{\eta^{(j,k)}} \vert^2\,dx\right)\\
  &\geq \frac{3}{2a}\left(\int_{D_j} \frac{a}{3} \vert u^{g^{(j,k)}}_{\eta^{(j,k)}} \vert^2\,dx  -\int_{\omega\setminus D_j}\left(b-\frac{a}{3}\right)\vert u^{g^{(j,k)}}_{\eta^{(j,k)}} \vert^2\,dx\right)\\
  &= \frac{1}{2} \int_{D_j}  \vert u^{g^{(j,k)}}_{\eta^{(j,k)}} \vert^2\,dx-\left( \frac{3b}{2a} -\frac{1}{2} \right)
  \int_{\omega\setminus D_j}\vert u^{g^{(j,k)}}_{\eta^{(j,k)}} \vert^2\,dx=\beta^{(j,k)}>1,
  }
  \]
  and $(ii)$ is   proved.  To prove  $(iii)$  we use a similar approach  as in the construction of localized potentials in   \cite{harrach2019global}. For $j=1,\ldots, N$  and  $k=1,\ldots,K$,   we introduce the operator $A$  as in Theorem \ref{thm:runge} 
 \[
A:  L^2(\omega)\to  L^2(\partial\Omega), \quad f\mapsto A f:= v_{|\partial\Omega}, 
\]
where $v\in H^1(\Omega)$  solves 
\[
\int_\Omega\sigma\nabla v\cdot\nabla w\,dx+\int_\omega \eta^{(j,k)} v w\,dx=\int_{\omega}f w\,dx\quad \textrm{ for all } w\in H^1(\Omega).
\]
We have shown  that the adjoint operator  $A^*$ of   $A$  is given  by 
\[
A^*:  L^2(\partial\Omega)\rightarrow L^2(\omega):  g\mapsto  u\vert_{\omega},
\]
 where $u$  is the solution of   (\ref{transm})  with  $q=\eta^{(j,k)}$,  and that $A^*$  has dense range.\\
 \noindent
 Consider the linear  ill-posed equation
 \[
 A^*g=3\chi_{D_j}.
 \]
 Sine $ 3\chi_{D_j} \in \overline{\mathcal{R}(A^*)}$,  the conjugate gradient method  \cite[III.15]{hanke2017taste}, yields  a sequence  of iterates 
 $(g_n)_{n\in \mathbb{N}}\subset L^2(\partial\Omega)$      for  which 
 \[ 
 A^*g_n \rightarrow 3\chi_{D_j}.
 \] 
Therefore,  the solutions $u_n$  of   (\ref{transm})    with  $q=\eta^{(j,k)}$  and  $g=g_n$  fulfill
\[
 \frac{1}{2}\int_{D_j} \vert u_n\vert^2\,dx-\left( \frac{3b}{2a} -\frac{1}{2} \right)\int_{\omega\setminus D_j}
\vert u_n\vert^2\,dx\rightarrow \frac{3}{2},
\]
so that after finitely many iteration steps, (\ref{exist_g}) is fulfilled.
\end{proof}
\noindent
Now, we state the main result of this  subsection.
 \begin{theorem}[Quantitative Lipschitz stability]\label{quantitative}
 Let $g^{(j,k)}\in L^2(\Omega)$  defined as in Lemma  \ref{existence}.  Set 
 \[
 L=\left(\max\left\{  \Vert  g^{(j,k)}\Vert^2_{L^2(\partial\Omega)}, \quad j=1,\ldots, N,  k=1,\ldots, K  \right\}\right)^{-1}.
 \]
Then 
 \begin{equation}\label{stab_q}
 \Vert  q_1-q_2  \Vert_{\infty}\leq L\Vert \Lambda(q_1)-\Lambda(q_2) \Vert_{\infty} \quad \text{ for all }\quad
  q_1,  q_2\in \mathcal{F}_{[a,b]}.
 \end{equation}
 \end{theorem}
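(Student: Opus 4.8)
The plan is to prove (\ref{stab_q}) directly, exploiting that the localized potentials $g^{(j,k)}$ of Lemma \ref{existence} are now available as \emph{explicit, uniformly bounded} test currents. This lets me bypass the abstract compactness/infimum argument used in Theorem \ref{stability} and instead produce, for each fixed pair $q_1,q_2$, one concrete current that already witnesses the stability inequality. First I would reduce the left-hand $L^\infty$-norm to a single distinguished subdomain. Since $q_1,q_2\in\mathcal{F}_{[a,b]}$ are piecewise constant on the partition $\cup_{j=1}^N D_j=\omega$, writing $q_i=\sum_j q_{i,j}\chi_{D_j}$ gives $\|q_1-q_2\|_\infty=\max_j|q_{1,j}-q_{2,j}|$, attained at some index $j_0$. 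After possibly interchanging $q_1$ and $q_2$ (which leaves $\|\Lambda(q_1)-\Lambda(q_2)\|_\infty$ unchanged) I may assume $q_{1,j_0}\ge q_{2,j_0}$, so that on $D_{j_0}$ one has $q_1-q_2\equiv\|q_1-q_2\|_\infty\ge 0$, while $|q_1-q_2|\le\|q_1-q_2\|_\infty$ holds pointwise on the remaining pieces.

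Next I would feed this sign structure into a linear monotonicity lower bound. Applying the first inequality of Lemma \ref{mono} with the roles of $q_1$ and $q_2$ interchanged and then negating (exactly the bound already used in the proof of Theorem \ref{stability}) yields, for every $g\in L^2(\partial\Omega)$,
\[
\int_{\partial\Omega} g\,\bigl(\Lambda(q_2)-\Lambda(q_1)\bigr)\,g\,ds \;\geq\; \int_\omega (q_1-q_2)\,\bigl|u_{q_1}^{(g)}\bigr|^2\,dx.
\]
Splitting the right-hand integral over $D_{j_0}$ and $\omega\setminus D_{j_0}$ and using the pointwise sign bounds from the first step, the right-hand side is at least
\[
\|q_1-q_2\|_\infty\left(\int_{D_{j_0}}\bigl|u_{q_1}^{(g)}\bigr|^2\,dx-\int_{\omega\setminus D_{j_0}}\bigl|u_{q_1}^{(g)}\bigr|^2\,dx\right).
\]

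The decisive step is to choose $g$ so that the bracket exceeds $1$. For the index $j_0$ and the \emph{actual} coefficient $q_1$, Lemma \ref{existence}(ii) supplies an index $k$ and the current $g:=g^{(j_0,k)}$ for which the solution satisfies precisely
\[
\int_{D_{j_0}}\bigl|u_{q_1}^{(g^{(j_0,k)})}\bigr|^2\,dx-\int_{\omega\setminus D_{j_0}}\bigl|u_{q_1}^{(g^{(j_0,k)})}\bigr|^2\,dx \;\geq\; \beta^{(j_0,k)}>1 .
\]
Hence $\int_{\partial\Omega} g^{(j_0,k)}(\Lambda(q_2)-\Lambda(q_1))g^{(j_0,k)}\,ds\ge\|q_1-q_2\|_\infty$. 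Since $\Lambda(q_1),\Lambda(q_2)$ are self-adjoint, the operator norm dominates the normalized quadratic form, so
\[
\|\Lambda(q_1)-\Lambda(q_2)\|_\infty \;\geq\; \frac{\int_{\partial\Omega} g^{(j_0,k)}\bigl(\Lambda(q_2)-\Lambda(q_1)\bigr)g^{(j_0,k)}\,ds}{\bigl\|g^{(j_0,k)}\bigr\|_{L^2(\partial\Omega)}^2}\;\geq\;\frac{\|q_1-q_2\|_\infty}{\bigl\|g^{(j_0,k)}\bigr\|_{L^2(\partial\Omega)}^2}.
\]
Bounding $\|g^{(j_0,k)}\|_{L^2(\partial\Omega)}^2\le\max_{j,k}\|g^{(j,k)}\|^2=L^{-1}$ and rearranging then delivers the quantitative Lipschitz estimate (\ref{stab_q}).

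The step I expect to be the main obstacle is controlling the \emph{indefinite sign} of $q_1-q_2$ on $\omega\setminus D_{j_0}$: the monotonicity inequality only governs the signed integral $\int_\omega(q_1-q_2)|u_{q_1}^{(g)}|^2$, whose contribution off the distinguished piece may be negative, so a careless estimate would destroy the constant. This is exactly what the concentration property of Lemma \ref{existence}(ii) is designed to overcome, since its purpose-built current $g^{(j_0,k)}$ pushes the energy of $u_{q_1}^{(g)}$ onto $D_{j_0}$ \emph{robustly for every admissible} $q_1\in\mathcal{F}_{[a,b]}$, not merely for the coefficient $\eta^{(j,k)}$ used to construct it. The uniform bound $\|g^{(j,k)}\|^2\le L^{-1}$ across the finite family $j=1,\dots,N$, $k=1,\dots,K$ is what finally makes a \emph{single} constant serve all pairs in $\mathcal{F}_{[a,b]}$ simultaneously.
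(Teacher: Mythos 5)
Your proposal is correct and is essentially the paper's own argument unrolled: the paper routes the same ingredients (the sign/normalization reduction to a distinguished piece $D_{j_0}$, the first monotonicity inequality of Lemma \ref{mono}, and the purpose-built currents of Lemma \ref{existence}(ii)) through the $\phi$/compact-set framework of Theorem \ref{stability} via the bound $\sup_{\|g\|=1}\phi(g,\zeta,\kappa_1,\kappa_2)\geq L$, whereas you apply them directly to the given pair $q_1,q_2$, which is a cleaner presentation of the identical idea. Note that both your derivation and the paper's actually yield $\Vert q_1-q_2\Vert_\infty\leq L^{-1}\Vert\Lambda(q_1)-\Lambda(q_2)\Vert$ with $L^{-1}=\max_{j,k}\Vert g^{(j,k)}\Vert^2_{L^2(\partial\Omega)}$, so the constant in the theorem's statement \eqref{stab_q} should be read as $1/L$ rather than $L$.
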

 \begin{proof}
 From  Lemma \ref{existence},  we have for all  $q\in \mathcal{F}_{[a,b]}$,  and for all $j\in \{1,\ldots,N\}$
 \begin{equation}\label{eq_existence}
 \eqalign{
 &\sup_{\Vert g\Vert=1}\left( \int_{D_j}\vert u^g_q \vert^2\,dx - \int_{\omega\setminus D_j}\vert u^g_q \vert^2\,dx\right)\\
&= \sup_{0\neq g\in L^2(\partial \Omega)}\frac{1}{\Vert g\Vert^2}\left( \int_{D_j}\vert u^g_q \vert^2\,dx- \int_{\omega\setminus D_j}\vert u^g_q \vert^2\,dx\right)\geq L. 
}
 \end{equation}
 To prove Theorem \ref{quantitative},  it suffices  to show  that  
  \begin{equation}\label{eq:phi}
  \sup_{\| g \|=1} \phi(g,\zeta,\kappa_1,\kappa_2)\geq L \quad \text{ for all } (\zeta,\kappa_1,\kappa_2)\in \mathcal{C}\times\mathcal{F}_{[a,b]}\times \mathcal{F}_{[a,b]},
  \end{equation}
   where  $\phi$  and $\mathcal{C}$ defined in  (\ref{eq:Definition_Psi}) and  (\ref{eq:Definition_CC}).  
 Since $\mathcal{F}$ contains only piecewise-constant functions,  for every $\zeta \in  \mathcal{C}$ there
must exist a  subset  $D_j\subset \omega$ with either
 \[
 \zeta\vert_{Dj}=1,  \quad \text{  or }\quad    \zeta\vert_{Dj}=-1, 
 \]
 Hence using   (\ref{eq:Definition_Psi}) and  (\ref{eq_existence}),  we obtain for the case $\zeta\vert_{Dj}=1$,
 \[
 \eqalign{
  \sup_{\| g \|=1} \phi(g,\zeta,\kappa_1,\kappa_2)&\geq   \sup_{\| g \|=1}\int_\omega\zeta \vert u^g_{\kappa_1} \vert^2\,dx\\
& \geq  \sup_{\| g \|=1}\left(  \int_{D_j} \vert u^g_{\kappa_1} \vert^2\,dx-
 \int_{\omega\setminus D_j} \vert u^g_{\kappa_1} \vert^2\,dx
 \right)\geq L,
 }
 \]
 and for the case  $\zeta\vert_{Dj}=-1$,
  \[
  \eqalign{
  \sup_{\| g \|=1} \phi(g,\zeta,\kappa_2,\kappa_2)&\geq   \sup_{\| g \|=1}\int_\omega(-\zeta) \vert u^g_{\kappa_2} \vert^2\,dx\\
& \geq  \sup_{\| g \|=1}\left(  \int_{D_j} \vert u^g_{\kappa_2} \vert^2\,dx-
 \int_{\omega\setminus D_j} \vert u^g_{\kappa_2} \vert^2\,dx
 \right)\geq L.
 }
 \]
 so that (\ref{eq:phi}) is proved and the proof is completed.
  \end{proof}
\section{Simultaneous recovery of diffusion and absorption}
The inverse problem  of recovering $\sigma$ and $q$  simultaneously  is known to be  an ill-posed problem
and stability results can only be obtained under a-priori assumptions.  

For our problem, we will prove a stability result under the assumption that
the coefficients  belong to an a-priori known finite-dimensional subspace, that upper
and lower bounds are a-priori known, and that a  definiteness condition holds. 

As in the last section the main tools to prove the  stability are the monotonicity  and  the existence of localized 
potentials, which are the subject of the following  subsection.
\subsection{Monotonicity and localized potentials}
\begin{lemma}[Monotonivity]
\label{mono2}
Let $\sigma_1, \sigma_2, q_1, q_2\in L^\infty_+(\Omega)$. Then
\begin{equation}
\label{eqmono1}
\eqalign{
 \int_\Omega\bigl[(\sigma_2-\sigma_1)|\nabla u_1|^2+(q_2-q_1) u_1^2\bigr]\,dx&\geq 
\langle g, \left(\Lambda(\sigma_1,q_1)-\Lambda(\sigma_2,q_2)\right)g\rangle\cr
&\geq \int_\Omega\bigl[(\sigma_2-\sigma_1)|\nabla u_2|^2+(q_2-q_1) u_2^2\bigr]\,dx,
}
\end{equation}
\begin{equation}
\label{eqmono2}
\eqalign{
 \langle g, \left(\Lambda(\sigma_1,q_1)-\Lambda(\sigma_2,q_2)\right) g\rangle &\geq 
\int_\Omega\left[\left(\sigma_1-\frac{\sigma_1^2}{\sigma_2}\right)\vert\nabla u_1\vert^2+\left(q_1-\frac{q_1^2}{q_2}\right) u_1^2\right]\,dx\cr
&=\int_\Omega\left[\frac{\sigma_1}{\sigma_2}(\sigma_2-\sigma_1)|\nabla u_1|^2+\frac{q_1}{q_2}(q_2-q_1) u_1^2\right]\,dx,
}
\end{equation}
for all  $g \in  L^2(\partial\Omega)$ where $u_1,  u_2 \in H^1(\Omega)$ are the solutions of (\ref{transm}) with Neumann boundary data $g$ on $\partial\Omega$, 
and coefficients $(\sigma_1,  q_1)$, resp., $(\sigma_2, q_2)$.
\end{lemma}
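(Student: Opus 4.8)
The plan is to extend the one-coefficient argument of Lemma~\ref{mono}, now carrying along the gradient and the zeroth-order terms simultaneously. First I would record the bilinear identities coming from the weak formulation \eqref{eqv}: testing the equation for $u_i$ against itself gives the energy relation $\langle g,\Lambda(\sigma_i,q_i)g\rangle=\int_\Omega \sigma_i|\nabla u_i|^2+q_i u_i^2$, while testing the equation for $u_1$ against $u_2$ yields the mixed relation $\int_\Omega \sigma_1\nabla u_1\cdot\nabla u_2+q_1 u_1 u_2=\langle g,\Lambda(\sigma_2,q_2)g\rangle$ (and its index-swapped counterpart). Expanding the nonnegative quantity $\int_\Omega \sigma_1|\nabla(u_1-u_2)|^2+q_1(u_1-u_2)^2$ and inserting these relations produces the key energy identity
\[
\langle g,(\Lambda(\sigma_1,q_1)-\Lambda(\sigma_2,q_2))g\rangle
=\int_\Omega\bigl[\sigma_1|\nabla(u_1-u_2)|^2+q_1(u_1-u_2)^2\bigr]
-\int_\Omega\bigl[(\sigma_1-\sigma_2)|\nabla u_2|^2+(q_1-q_2)u_2^2\bigr],
\]
together with the analogous identity obtained by interchanging $1\leftrightarrow 2$.

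To obtain \eqref{eqmono1} I would simply discard the manifestly nonnegative integral $\int_\Omega \sigma_1|\nabla(u_1-u_2)|^2+q_1(u_1-u_2)^2$ in the energy identity, which leaves the lower bound $\langle g,(\Lambda(\sigma_1,q_1)-\Lambda(\sigma_2,q_2))g\rangle\geq\int_\Omega\bigl[(\sigma_2-\sigma_1)|\nabla u_2|^2+(q_2-q_1)u_2^2\bigr]$. The upper bound in \eqref{eqmono1} follows in exactly the same way from the index-swapped identity, after dropping its nonnegative square and negating.

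The sharper estimate \eqref{eqmono2} is obtained from the \emph{same} energy identity, but by reorganizing it through completion of squares instead of discarding a term. The point is the pointwise identities
\[
\sigma_1|\nabla(u_1-u_2)|^2-(\sigma_1-\sigma_2)|\nabla u_2|^2
=\sigma_2\bigl|\nabla u_2-\tfrac{\sigma_1}{\sigma_2}\nabla u_1\bigr|^2+\bigl(\sigma_1-\tfrac{\sigma_1^2}{\sigma_2}\bigr)|\nabla u_1|^2,
\]
and the scalar analogue $q_1(u_1-u_2)^2-(q_1-q_2)u_2^2=q_2\bigl(u_2-\tfrac{q_1}{q_2}u_1\bigr)^2+\bigl(q_1-\tfrac{q_1^2}{q_2}\bigr)u_1^2$, both legitimate because $\sigma_2,q_2\in L^\infty_+(\Omega)$ have positive essential infima, so division by them is harmless. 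Substituting these into the energy identity rewrites it as
\[
\langle g,(\Lambda(\sigma_1,q_1)-\Lambda(\sigma_2,q_2))g\rangle
=\int_\Omega\Bigl[\sigma_2\bigl|\nabla u_2-\tfrac{\sigma_1}{\sigma_2}\nabla u_1\bigr|^2+q_2\bigl(u_2-\tfrac{q_1}{q_2}u_1\bigr)^2\Bigr]
+\int_\Omega\Bigl[\bigl(\sigma_1-\tfrac{\sigma_1^2}{\sigma_2}\bigr)|\nabla u_1|^2+\bigl(q_1-\tfrac{q_1^2}{q_2}\bigr)u_1^2\Bigr],
\]
and dropping the first (nonnegative) integral yields \eqref{eqmono2}; the second displayed form there is just the elementary rewriting $\sigma_1-\sigma_1^2/\sigma_2=\tfrac{\sigma_1}{\sigma_2}(\sigma_2-\sigma_1)$ and likewise for $q$.

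I expect the only real obstacle to be bookkeeping: verifying the completion-of-squares identity for the \emph{vector-valued} gradient term, where the square $|\nabla u_2-\tfrac{\sigma_1}{\sigma_2}\nabla u_1|^2$ is that of a vector, and keeping the signs consistent when passing between the forward and index-swapped identities. Since the positivity-of-coefficients hypothesis and the symmetry of $\Lambda$ implicit in writing these quadratic forms are already available, beyond this algebra the argument is a direct transcription of Lemma~\ref{mono}.
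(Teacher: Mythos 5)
Your proposal is correct and follows essentially the same route as the paper: the paper proves \eqref{eqmono1} by citing the standard energy identity argument (Harrach 2009, Lemma 4.1) and obtains \eqref{eqmono2} by ``following the proof of Lemma \ref{mono}'', which is precisely the completion-of-squares rearrangement of that same identity that you carry out explicitly. Your key energy identity, the discarding of the nonnegative square for \eqref{eqmono1}, and the pointwise identities $\sigma_1|\nabla(u_1-u_2)|^2-(\sigma_1-\sigma_2)|\nabla u_2|^2=\sigma_2\bigl|\nabla u_2-\tfrac{\sigma_1}{\sigma_2}\nabla u_1\bigr|^2+\bigl(\sigma_1-\tfrac{\sigma_1^2}{\sigma_2}\bigr)|\nabla u_1|^2$ (and its scalar analogue) all check out, so you have simply written out in full the details the paper delegates to references.
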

\begin{proof}
The proof of (\ref{eqmono1})    is given in \cite[Lemma\; 4.1 ]{harrach2009uniqueness}. Following the proof 
of Lemma \ref{mono}, we can easily   deduce (\ref{eqmono2}). 
\end{proof}
\begin{theorem}[Localized potentials]
\label{thm:locpot}
Let $\sigma, q \in L^\infty_+(\Omega)$ that are  piecewise analytic  and  $D\Subset\Omega $ be  non empty  open set,   such that   $\Omega\setminus\overline{D}$  is  connected.
Let $B$ be a subdomain of $D$ with smooth boundary $\partial B$.
 Then there exists a sequence
$(g_n)_{n\in \mathbb{N}}\subset L^2(\Omega)$, such that the corresponding solutions
$(u^{(g_n)})_{n\in \mathbb{N}}$ of (\ref{transm}) fulfill 
\begin{equation}
\label{localized_u}
\lim_{n\to \infty}\Vert u^{(g_n)}\Vert^2_{L^2(B)}=\infty,
\end{equation}
\begin{equation}
\label{localized_ubord}
\lim_{n\to \infty}\Vert  u^{(g_n)}\Vert^2_{H^1(D\setminus\overline{B})}=0,
\end{equation}
\begin{equation}
\label{localized_uB}
\lim_{n\to \infty}\Vert u^{(g_n)}\Vert^2_{L^2(\partial B)}=0,
\end{equation}
\begin{equation}
\label{localized_gradu}
\lim_{n\to \infty}\Vert \nabla u^{(g_n)}\Vert^2_{L^2(B)}=\infty.
\end{equation}
\end{theorem}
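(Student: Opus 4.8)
The plan is to reduce all four limits to a single functional-analytic dichotomy, exactly as in the proof of Lemma~\ref{lemma:locpot1} but now keeping track of gradients. I would introduce the two bounded linear operators
\[
A_1:L^2(\partial\Omega)\to L^2(B)^d,\quad g\mapsto \nabla u^{(g)}|_B,
\qquad
A_2:L^2(\partial\Omega)\to H^1(D\setminus\overline B),\quad g\mapsto u^{(g)}|_{D\setminus\overline B},
\]
where $u^{(g)}$ solves (\ref{transm}). Then (\ref{localized_gradu}) and (\ref{localized_ubord}) amount precisely to producing a sequence $(g_n)$ with $\norm{A_1 g_n}\to\infty$ and $\norm{A_2 g_n}\to 0$. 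By the standard Hilbert-space duality lemma underlying the localized-potentials method (as in \cite{harrach2019global}), such a sequence exists if and only if there is no constant $C$ with $\norm{A_1 g}\le C\norm{A_2 g}$ for all $g$, which is in turn equivalent to the range non-inclusion $\mathcal{R}(A_1^*)\not\subseteq \mathcal{R}(A_2^*)$.

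Next I would compute the adjoints by testing the weak formulation (\ref{eqv}). Defining $w_\Phi\in H^1(\Omega)$ through $\int_\Omega(\sigma\nabla w_\Phi\cdot\nabla\varphi+q w_\Phi\varphi)\,dx=\int_B\Phi\cdot\nabla\varphi\,dx$ for all $\varphi\in H^1(\Omega)$ and choosing $\varphi=u^{(g)}$ shows that $A_1^*\Phi=w_\Phi|_{\partial\Omega}$, so $w_\Phi$ solves (\ref{transm}) with homogeneous Neumann data and interior source $-\nabla\cdot(\chi_B\Phi)$; in particular it is a solution away from $\overline B$. Likewise $A_2^*$ maps a datum to the boundary trace of a solution whose source is supported in the closed shell $\overline{D\setminus\overline B}$. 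Thus the two ranges consist of Dirichlet traces of solutions whose sources sit in the disjoint sets $\overline B$ and $\overline{D\setminus\overline B}$.

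The heart of the argument is the range non-inclusion, which I would prove by contradiction using unique continuation. Assume $\mathcal{R}(A_1^*)\subseteq\mathcal{R}(A_2^*)$ and fix $\Phi$ supported in a compact $K\Subset B$; then $A_1^*\Phi=w|_{\partial\Omega}=w'|_{\partial\Omega}$ for some shell-source solution $w'$, and both $w,w'$ carry vanishing Neumann data on $\partial\Omega$. Hence $w-w'$ has zero Cauchy data on $\partial\Omega$ and solves the homogeneous equation on the connected set $\Omega\setminus\overline D$, so unique continuation gives $w=w'$ there. Since $w$ is source-free off $\overline K$ while $w'$ is source-free off the shell, the connectivity hypothesis allows one to reach $B$ from $\Omega\setminus\overline D$ without crossing the shell, so a second application of unique continuation propagates $w=w'$ across $B\setminus\overline K$; gluing $w$ and $w'$ then produces a global solution of (\ref{transm}) on $\Omega$ with homogeneous Neumann data, which must vanish by well-posedness of (\ref{eqv}). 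Therefore $A_1^*\Phi=0$ for every such $\Phi$, i.e. $A_1^*=0$, contradicting $A_1\neq 0$. I expect this two-stage unique-continuation-and-gluing step to be the main obstacle: it is exactly where the assumptions that $\Omega\setminus\overline D$ is connected and that $B$ is reachable from the exterior enter, and handling the gradient (rather than merely the $L^2$) localization is what forces the operator $A_1$ to record $\nabla u^{(g)}|_B$.

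Finally I would recover the remaining two limits from the ones just produced. Estimate (\ref{localized_uB}) follows from (\ref{localized_ubord}) by the trace inequality $\norm{u^{(g_n)}}_{L^2(\partial B)}\le C\norm{u^{(g_n)}}_{H^1(D\setminus\overline B)}\to 0$. For (\ref{localized_u}), I would combine (\ref{localized_gradu}) with a Caccioppoli estimate on a slightly enlarged set $B\Subset B_+$ whose excess $B_+\setminus B$ lies in the shell: from $\norm{\nabla u^{(g_n)}}_{L^2(B)}^2\le C\norm{u^{(g_n)}}_{L^2(B_+)}^2$ together with $\norm{u^{(g_n)}}_{L^2(B_+\setminus B)}\to 0$, the gradient blow-up forces $\norm{u^{(g_n)}}_{L^2(B)}\to\infty$, completing the proof.
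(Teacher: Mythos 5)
Your overall architecture---reduce the four limits to a norm comparison between two operators, characterize the failure of that comparison by a range non-inclusion for the adjoints, and settle the range condition by unique continuation---is exactly the strategy of the paper, which invokes \cite[Corollary 2.6]{gebauer2008localized} with ``virtual measurement'' operators playing the role of your $A_1^*,A_2^*$. Your variations at the two ends are legitimate and mildly different in emphasis: you let $A_1$ record $\nabla u^{(g)}|_B$ and recover (\ref{localized_u}) afterwards by a Caccioppoli estimate, whereas the paper blows up $\Vert u^{(g_n)}\Vert_{L^2(B)}$ first and recovers (\ref{localized_gradu}) from the Friedrichs-type inequality $\Vert u\Vert_{L^2(B)}\leq C\bigl(\Vert u\Vert_{L^2(\partial B)}+\Vert\nabla u\Vert_{L^2(B)}\bigr)$; the derivation of (\ref{localized_uB}) by the trace estimate is the same in both.

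The genuine gap is in the range non-inclusion step. After the first unique continuation gives $w=w'$ on $\Omega\setminus\overline{D}$, you claim that the connectivity hypothesis lets you ``reach $B$ from $\Omega\setminus\overline{D}$ without crossing the shell'' and then propagate $w=w'$ into $B\setminus\overline{K}$ by a second unique continuation. In the relevant geometry ($\overline{B}\subset D$, shell $D\setminus\overline{B}$ nonempty) the shell topologically separates $B$ from $\Omega\setminus\overline{D}$: every path from the exterior into $B$ must traverse $D\setminus\overline{B}$, which is precisely where the source of $w'$ sits, so $w-w'$ does not satisfy the homogeneous equation there and unique continuation cannot cross it. The assumption that $\Omega\setminus\overline{D}$ is connected only serves to propagate the zero Cauchy data throughout the exterior; it gives no access to $B$. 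The paper closes this step without ever entering $B$: having $v_1=v_2$ on $\Omega\setminus\overline{D}$, it glues $v:=v_1$ on the shell (where $v_1$ is source-free) with $v:=v_2$ on $B\cup(\Omega\setminus\overline{D})$ (where $v_2$ is source-free) into a global solution of the homogeneous Neumann problem, which vanishes by coercivity of the form (here $q>0$ matters); this yields $\mathcal{R}(A_1^*)\cap\mathcal{R}(A_2^*)=\{0\}$, and combined with $\mathcal{R}(A_1^*)\neq\{0\}$---which you assert but should justify, e.g.\ $\nabla u^{(g)}|_B=0$ for all $g$ forces $u^{(g)}$ constant on $B$, hence zero there by the equation since $q>0$, hence $g=0$ by unique continuation---gives the non-inclusion. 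Replace your second unique continuation by this gluing and the rest of your argument goes through.
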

\begin{proof}
 This proof is based on the UCP for  Cauchy data. First, we define the virtual  measurement operators 
 $A_j$    ($j=1,2$) by
\[
A_1 :  L^2(B)\rightarrow  L^2(\partial\Omega), \quad  F\mapsto  v|_{\partial\Omega},
\]
 where $v\in H^1(\Omega)$ solves 
 \begin{equation}\label{dual_A} 
 \int_\Omega\sigma\nabla v\cdot\nabla w\,dx+\int_\Omega qvw\,dx=\int_{B}  Fw\,dx \quad \text{ for all } w\in H^1(\Omega), 
 \end{equation}
\[
A_2 :  H^1( D\setminus\overline{B})^{\prime} \rightarrow  L^2(\partial\Omega), \quad G\mapsto  v|_{\partial\Omega},
\]
 where $v\in H^1(\Omega)$ solves 
 \begin{equation}\label{dual_B}
 \int_\Omega\sigma\nabla v\cdot\nabla w\,dx+\int_\Omega qvw\,dx=\langle G, w\rangle_{D\setminus\overline{B}} \quad \text{ for all } w\in H^1(\Omega).
 \end{equation}
 Here $ \langle . , .\rangle_{D\setminus\overline{B}}$ denotes the dual pairing on $ H^1( D\setminus\overline{B})^{\prime}\times H^1( D\setminus\overline{B})$.
\noindent
First, we show that the dual operators $A'_1$ and $A'_2$  are given by
 \[
 \eqalign{
&A'_1 : L^2(\partial\Omega)  \rightarrow  L^2(B):  g \mapsto A'_1g= u|_{B}, \cr
&A'_2 : L^2(\partial\Omega)  \rightarrow   H^1( D\setminus\overline{B}): g\mapsto A'_2 g= u|_{D\setminus\overline{B}}.
}
\]
Let $F\in L^2(\Omega)$, $g\in L^2(\partial\Omega)$, $u$, $v\in H^1(\Omega)$ solve (\ref{transm}) and (\ref{dual_A}), respectively. Then,
\[
\int_{\Omega}F A^{\prime}_1 g\,dx 
= \int_{\partial\Omega}gA_1 F\,ds
=\int_{\Omega}\sigma\nabla v\cdot\nabla u\,dx+ \int_\Omega  q v u\,dx
=\int_{B} F u\,dx.
\]
 Let $G\in H^1(\Omega)$, $g\in L^2(\partial\Omega)$,  $u$, $v\in H^1(\Omega)$ solve (\ref{transm}) and (\ref{dual_B}), respectively. Then,
\[
\int_{\Omega}G  A^{\prime}_2 g \,dx
 = \int_{\partial\Omega}g A_2 G\,ds
 =\int_{\Omega}\sigma \nabla v \cdot \nabla u\,dx+\int_\Omega q v u\,dx
 =\langle G, u\rangle_{D\setminus\overline{B}}.
\]
\noindent
 Next, we will prove that
\[
\mathcal{R}(A_1)\cap  \mathcal{R}(A_2)=\lbrace 0\rbrace \quad \mathrm{and}\quad \mathcal{R}(A_1)\neq \{0\} .\label{range_A_1_B_2}
\]
Let  $\varphi \in \mathcal{R}(A_1)  \cap  \mathcal{R}(A_2)$.  Then there exist  $v_1, v_2
\in H^1(\Omega)$  such  that $ v_1|_{\partial\Omega}= v_2|_{\partial\Omega} =\varphi$,
and 
\[
\int_\Omega\sigma\nabla v_j\cdot \nabla w\,dx+ \int_\Omega q v_j w\,dx=0,
\]
for all $w\in H^1(\Omega)$  with  $\textrm{supp}(w)\subset \overline{\Omega}\setminus\overline{D}$.  
\noindent
Hence,
 \[
\mathrm{div}(\sigma\ \nabla v_j)+qv_j = 0\quad \text{ in }\Omega\setminus\overline{D},
\]
and $(\sigma\partial_\nu v_1)|_{\partial\Omega}=(\sigma\partial_n v_2)|_{\partial\Omega}=0$. 
The unique continuation principle for Cauchy data  yields that $v_1 = v_2$ in $\Omega\setminus\overline{D}$. Hence  
 $v:=  v_1\chi_{D\setminus\overline{B}}+v_2\chi_{\Omega\setminus (D\setminus\overline{B})}\in  H^1(\Omega)$ and  satisfies  
 \[
\left\{
\eqalign{
&\mathrm{div}(\sigma  \nabla v)+qv  = 0\quad \textrm{ in }\Omega,\cr
&\sigma\partial_\nu  v\  = 0\quad \textrm{ on }\partial\Omega.
}
\right.
\]
It follows  that $v=0$ and thus   $\varphi=v|_{\partial\Omega}=0$, and consequently \mbox{$\mathcal{R}(A_1)  \cap  \mathcal{R}(A_2)=\left\{0\right\}$}.
\\
\\
Next, we  will prove that  $\mathcal{R}(A_1)\neq \left\{0\right\}$.  We first prove the injectivity of the dual  operator $A_1^{\prime}$. Let  $g\in L^2(\partial\Omega)$  be such  that 
$A_1^{\prime}g= u\vert_D=0$.  By the unique continuation principal, we conclude that $u=0$ in $\Omega$.  This means that $g=\sigma\partial_\nu u\vert_{\partial\Omega}=0$,
which proves that $A_1^{\prime}$  is injective.  Hence  $A_1$  has a dense range, i.e.,   $\overline{\mathcal{R}(A_1)}=L^2(\partial\Omega)$.

A fortiori  $\mathcal{R}(A_1)\neq \{0\}$,   which together with  \mbox{$\mathcal{R}(A_1)  \cap  \mathcal{R}(A_2)=\left\{0\right\}$},  
 implies  the range non inclusion $\mathcal{R}(A_1)\not\subseteq \mathcal{R}(A_2)$.
Using \cite[Corollary 2.6]{gebauer2008localized}, it follows that there exists a sequence
$(g_n)_{n\in \mathbb{N}}\subset L^2(\partial\Omega)$  such  that 
\[
\lim_{n\to \infty}\Vert A_1^\prime g_n\Vert^2_{L^2(B)}=\lim_{n\to \infty} \Vert u^{(g_n)}\Vert^2_{L^2(B)}=\infty,
\]
\noindent
and
\begin{equation}
\label{op2}
\lim_{n\to \infty}\Vert A_2^\prime g_n\Vert^2_{H^1(D\setminus\overline{B})}=\lim_{n\to \infty} \Vert u^{(g_n)}\Vert^2_{H^1(D\setminus\overline{B})}=0.
\end{equation}
i.e.   (\ref{localized_u})   and (\ref{localized_ubord})  hold.  Also  (\ref{localized_uB}),  holds from  (\ref{op2}).  Since
\[
\Vert u^{(g_n)}\Vert_{L^2(B)} \leq C\left( \Vert u^{(g_n)}\Vert_{L^2(\partial B)}+\Vert \nabla u^{(g_n)}\Vert_{L^2(B)}  \right),
\]
where  $C>0$  is a constant,  this  also imply (\ref{localized_gradu}).
\end{proof}
\noindent
 Let $\mathcal{G}$ be a finite dimensional subset of  piecewise analytic  functions.   We consider four constants  $0<c_1\leq c_2$ and  $0< c_3 \leq c_4$
  which are the lower and upper bounds of the   parameters  and define the set
\[
 \mathcal{G}_{[c_1,c_2]\times[c_3,c_4]}=\left\{ (\sigma, q)\in \mathcal{G}:  \quad  c_1\leq\sigma(x)\leq c_2,  \quad  c_3\leq q(x)\leq c_4 \quad \textrm{ for all } x\in \Omega \right\}.
\]
In the following main result of this paper, the domain $\Omega$, the finite-dimensional subset $\mathcal{G}
$ and the bounds $0< c_1\leq c_2$ and $0< c_3 \leq c_4$ are fixed, and the constant in the Lipschitz stability result
 will depend on them.
\begin{theorem}[Lipschitz stability]
\label{stability2}
 There exists a positive constant $C>0$ such that for all 
$(\sigma_1, q_1), (\sigma_2, q_2) \in \mathcal{G}_{[c_1,c_2]\times[c_3,c_4]}$ with either
\[
\eqalign{
(a)\quad  \sigma_1\leq \sigma_2 \,\,\textrm{and}\,\, q_1\leq q_2\quad \textrm{or}\cr
(b)\quad \sigma_1\geq \sigma_2 \,\,\textrm{and}\,\, q_1\geq q_2,
}
\]
\noindent
we have 
\begin{equation}
\label{stab-est}
\eqalign{
d_\Omega( (\sigma_1,q_1),(\sigma_2,q_2)):=&\mathrm{max}\left(\Vert\sigma_1-\sigma_2\Vert_{L^\infty(\Omega)},  \Vert q_1-q_2\Vert_{L^\infty(\Omega)} \right)\cr
&\leq   C\| \Lambda(\sigma, q_1)-\Lambda(\sigma_2,q_2) \|_*.
}
\end{equation}
Here  $\Vert.\Vert_*$ is the natural norm of $\Vert.\Vert_{\mathcal L(L^2(\partial\Omega))}$. 
\end{theorem}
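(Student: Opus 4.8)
The plan is to mirror the proof of Theorem~\ref{stability}, replacing the scalar monotonicity estimate by Lemma~\ref{mono2} and the scalar localized potentials by Theorem~\ref{thm:locpot}, and to reduce the stability bound to a positivity statement on a compact set. The ordering hypotheses (a)/(b) will be used precisely to make the two coefficient differences \emph{sign-definite}, which is what prevents the cross-talk cancellation between $\sigma$ and $q$. Set $\mathcal{A}:=\mathcal{G}_{[c_1,c_2]\times[c_3,c_4]}$ for brevity. In case (a) we have $\sigma_2-\sigma_1\geq 0$ and $q_2-q_1\geq 0$, so the lower estimate in \eqref{eqmono1} (which is stated in terms of $u_2=u^{(g)}_{(\sigma_2,q_2)}$) gives $\langle g,(\Lambda(\sigma_1,q_1)-\Lambda(\sigma_2,q_2))g\rangle\geq\int_\Omega[(\sigma_2-\sigma_1)|\nabla u_2|^2+(q_2-q_1)u_2^2]\,dx\geq 0$. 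In case (b) the upper estimate in \eqref{eqmono1} (in terms of $u_1$) shows symmetrically that $\Lambda(\sigma_2,q_2)-\Lambda(\sigma_1,q_1)\geq 0$ and that $\langle g,(\Lambda(\sigma_2,q_2)-\Lambda(\sigma_1,q_1))g\rangle\geq\int_\Omega[(\sigma_1-\sigma_2)|\nabla u_1|^2+(q_1-q_2)u_1^2]\,dx\geq 0$. Since the operators are self-adjoint and, in each case, ordered, $\|\Lambda(\sigma_1,q_1)-\Lambda(\sigma_2,q_2)\|_*$ equals the supremum over $\|g\|=1$ of the sign-definite quadratic form, and is therefore bounded below by $\sup_{\|g\|=1}\int_\Omega[\eta|\nabla u|^2+\zeta u^2]\,dx$, where $(\eta,\zeta)$ is the \emph{nonnegative} pair of differences and $u=u^{(g)}_\kappa$ is the solution for the relevant coefficient pair $\kappa\in\mathcal{A}$ ($\kappa=(\sigma_2,q_2)$ in case (a), $\kappa=(\sigma_1,q_1)$ in case (b)).

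Next I would normalize. For $(\sigma_1,q_1)\neq(\sigma_2,q_2)$ divide by $d_\Omega:=d_\Omega((\sigma_1,q_1),(\sigma_2,q_2))$ and set $\hat\sigma:=\eta/d_\Omega$, $\hat q:=\zeta/d_\Omega$, so that $\hat\sigma\geq 0$, $\hat q\geq 0$ and $\max(\|\hat\sigma\|_{L^\infty(\Omega)},\|\hat q\|_{L^\infty(\Omega)})=1$ by the definition of $d_\Omega$. Introducing the functional
\[
\Psi\bigl(g,(\hat\sigma,\hat q),\kappa\bigr):=\int_\Omega\bigl[\hat\sigma\,|\nabla u^{(g)}_\kappa|^2+\hat q\,(u^{(g)}_\kappa)^2\bigr]\,dx
\]
and the set $\mathcal{K}:=\{(\hat\sigma,\hat q)\in\mathrm{span}\,\mathcal{G}:\ \hat\sigma\geq 0,\ \hat q\geq 0,\ \max(\|\hat\sigma\|_{L^\infty(\Omega)},\|\hat q\|_{L^\infty(\Omega)})=1\}$, the computation above yields $\|\Lambda(\sigma_1,q_1)-\Lambda(\sigma_2,q_2)\|_*/d_\Omega\geq\sup_{\|g\|=1}\Psi(g,(\hat\sigma,\hat q),\kappa)$ with $(\hat\sigma,\hat q)\in\mathcal{K}$ and $\kappa\in\mathcal{A}$. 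The estimate \eqref{stab-est} then follows, with $C$ the reciprocal, once I establish that
\[
\inf_{(\hat\sigma,\hat q)\in\mathcal{K},\ \kappa\in\mathcal{A}}\ \sup_{\|g\|=1}\Psi\bigl(g,(\hat\sigma,\hat q),\kappa\bigr)>0 .
\]
Here $\mathcal{K}\times\mathcal{A}$ is a compact subset of a finite-dimensional space (closed and bounded, using that $\mathrm{span}\,\mathcal{G}$ is finite-dimensional), and since $\kappa\mapsto u^{(g)}_\kappa$ depends continuously on the coefficients while $\Psi$ is jointly continuous in $((\hat\sigma,\hat q),\kappa)$ for fixed $g$, the map $((\hat\sigma,\hat q),\kappa)\mapsto\sup_{\|g\|=1}\Psi$ is lower semicontinuous and attains its infimum. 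It therefore suffices to show $\sup_{\|g\|=1}\Psi(g,(\hat\sigma,\hat q),\kappa)>0$ for each fixed admissible triple.

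To prove this pointwise positivity I use Theorem~\ref{thm:locpot}. For fixed $(\hat\sigma,\hat q)\in\mathcal{K}$ either $\|\hat\sigma\|_{L^\infty(\Omega)}=1$ or $\|\hat q\|_{L^\infty(\Omega)}=1$. In the first case choose $\Theta\in(0,1)$ and an open set on which $\hat\sigma\geq\Theta$; select $B\Subset D\Subset\Omega$ inside it with $\Omega\setminus\overline{D}$ connected, and apply Theorem~\ref{thm:locpot} to the coefficient pair $\kappa$ to obtain $g_n$ with $\|\nabla u^{(g_n)}_\kappa\|_{L^2(B)}^2\to\infty$. Because $\hat\sigma,\hat q\geq 0$, all integrands are nonnegative, so $\Psi(g_n,(\hat\sigma,\hat q),\kappa)\geq\Theta\int_B|\nabla u^{(g_n)}_\kappa|^2\,dx\to\infty$ without any term to subtract; the case $\|\hat q\|_{L^\infty(\Omega)}=1$ is identical using the $L^2(B)$-blow-up \eqref{localized_u}. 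By homogeneity $\sup_{\|g\|=1}\Psi\geq\Psi(g_n/\|g_n\|,\cdot)=\Psi(g_n,\cdot)/\|g_n\|^2>0$, which completes the argument. The main obstacle I anticipate is exactly this sign-definiteness: without hypothesis (a) or (b) the two terms in $\Psi$ may cancel for concentrated solutions (the cross-talk/non-uniqueness phenomenon recalled in the introduction), so the ordering is essential to guarantee $\sup_g\Psi>0$; a secondary, routine technical point is verifying the geometric admissibility ($B\Subset D\Subset\Omega$ with connected complement) of the region where the dominant normalized coefficient is bounded below, which is always possible by shrinking $D$ to a small ball inside that region.
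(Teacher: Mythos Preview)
Your proof is correct and follows the same overall strategy as the paper (monotonicity estimate $\Rightarrow$ normalization $\Rightarrow$ compactness/lower semicontinuity $\Rightarrow$ pointwise positivity via localized potentials). The main difference is organizational: the paper carries both ordering cases through a single functional $\Phi=\max\{\Psi(\cdot,(\zeta_1,\zeta_2),(\kappa_1,\tau_1)),\Psi(\cdot,(-\zeta_1,-\zeta_2),(\kappa_2,\tau_2))\}$ and a compact set $\mathcal{K}=\mathcal{K}_+\cup\mathcal{K}_-$, then splits into $\mathcal{K}_+$ and $\mathcal{K}_-$ at the localized-potentials step. You instead dispose of cases~(a) and~(b) at the outset, observing that either way one obtains a \emph{nonnegative} pair $(\hat\sigma,\hat q)$ and a single relevant coefficient $\kappa\in\mathcal{A}$, so only one $\Psi$ and only $\mathcal{K}_+$ are needed. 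This buys you a genuine simplification in the final step: because both integrands are nonnegative, the blow-up from Theorem~\ref{thm:locpot} on $B$ suffices without any control outside $B$, whereas the paper has to invoke sign-definiteness on $\Omega\setminus B$ at that stage anyway. The only point worth making explicit is that for $(\hat\sigma,\hat q)\in\mathrm{span}\,\mathcal{G}$ the existence of an \emph{open} set where the dominant component exceeds $\Theta$ follows from piecewise analyticity (continuity on each subdomain), which you allude to in your closing remark.
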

\begin{proof}
 For the sake of brevity, we write  $\Vert.\Vert$  for  $\Vert.\Vert_{L^2(\partial\Omega)}$.
 We start with the reformulation of the right-hand side of estimate (\ref{stab-est}).
Since $\Lambda(\sigma_1,q_1)$ and $\Lambda(\sigma_2, q_2)$ are self-adjoint, we have that
\[
\eqalign{
 \lefteqn{\Vert\Lambda(\sigma_2,q_2)-\Lambda(\sigma_1,q_1)\Vert_*}\cr
&=  \sup_{\Vert g\Vert=1} \left| \langle g, \left(\Lambda(\sigma_2,q_2)-\Lambda(\sigma_1,q_1)\right) g\rangle\right|\cr
&=  \sup_{\Vert g\Vert=1} \max\left\{ \langle g, \left(\Lambda(\sigma_2,q_2)-\Lambda(\sigma_1,q_1)\right) g\rangle,
\langle g, \left(\Lambda(\sigma_1,q_1)-\Lambda(\sigma_2,q_2)\right) g\rangle
\right\}.
}
\]
\noindent
Next, we apply both inequalities in the monotonicity relation (\ref{eqmono1}) in Lemma \ref{mono2}
 in order to obtain lower bounds for the corresponding integrals. 
We thus obtain for all $g\in L^2(\partial\Omega)$
\noindent
\begin{equation}\label{estim_1}
 \langle g,\left(\Lambda(\sigma_2,q_2)-\Lambda(\sigma_1,q_1)\right) g\rangle \geq \int_\Omega(\sigma_1-\sigma_2)\vert\nabla u_{(\sigma_1,q_1)}^{g}\vert^2\,dx
+\int_\Omega(q_1-q_2) \vert u^{g}_{(\sigma_1,q_1)}\vert^2\,dx
\end{equation}
\noindent
and
\begin{equation}\label{estim_2}
 \langle g,\left(\Lambda(\sigma_1,q_1)-\Lambda(\sigma_2,q_2)\right) g\rangle \geq \int_\Omega(\sigma_2-\sigma_2)\vert\nabla u_{(\sigma_2,q_2)}^{g}\vert^2\,dx
+\int_\Omega(q_2-q_2) \vert u^{g}_{(\sigma_2,q_2)}\vert^2\,dx
\end{equation}
where $u_{\sigma_1,q_1}^{g},u_{\sigma_2,q_2}^{g}\in  H^1(\Omega)$ denote the solutions of (\ref{transm}) with Neumann data $g$ and parameters  $(\sigma_1, q_1)$ and $(\sigma_2,q_2)$, respectively.
 Based on the estimates (\ref{estim_1}) and (\ref{estim_2}), we obtain for $(\sigma_1,q_1)\neq (\sigma_2,q_2)$
\begin{equation}\label{estim_3}
 \eqalign{
 &\frac{\Vert \Lambda(\sigma_2,q_2)-\Lambda(\sigma_1,q_1) {\Vert_*}}{d_\Omega( (\sigma_1,q_1),(\sigma_2,q_2))}\cr
&\geq  
 \sup_{\Vert g\Vert=1}\Phi\left(g,\frac{\sigma_1-\sigma_2}{d_\Omega( (\sigma_1,q_1),(\sigma_2,q_2))},\frac{q_1-q_2}{(d_\Omega( (\sigma_1,q_1),(\sigma_2,q_2))},(\sigma_1,q_1),(\sigma_2, q_2)\right),
 }
\end{equation}
 and define for $g\in L^2(\partial\Omega)$, $(\zeta_1,\zeta_2)\in \mathcal{G}$, and $(\kappa_1,\tau_1),(\kappa_2,\tau_2)\in 
\mathcal{G}_{[c_1,c_2]\times[c_3,c_4]}$ the function $\Phi\left(g,(\zeta_1,\zeta_2),(\kappa_1,\tau_1),(\kappa_2,\tau_2)\right)$ by
\[
\eqalign{
& \Phi\left(g,(\zeta_1,\zeta_2),(\kappa_1,\tau_1),(\kappa_2,\tau_2)\right)\\
 &:=\max \left(\Psi\left(g,(\zeta_1,\zeta_2),(\kappa_1,\tau_1)\right), \Psi\left(g,(-\zeta_1,-\zeta_2),(\kappa_2,\tau_2)\right)      \right),
}
\]
with
\[
\Psi\left(g,(\beta,\gamma),(\kappa,\tau )\right):=
\int_{\Omega} \beta \vert\nabla u_{(\kappa,\tau)}^{g}\vert^2\,dx+
\int_\Omega \gamma\vert u^{g}_{(\kappa,\tau)}\vert^2\,dx.
\]
\noindent
We introduce the compact sets
\[
 \eqalign{
 \mathcal{K}_+&=\left\{ (\zeta_1,\zeta_2)\in \rm{span }\,\mathcal{G}:  \quad  \zeta_1,\zeta_2\geq 0 \quad \text{  and }\quad 
 \max\left(\Vert \zeta_1\Vert_{L^\infty(\Omega)}, \Vert \zeta_2\Vert_{L^\infty(\Omega)}\right)= 1 \right\},\cr
 \mathcal{K}_-&=\left\{ (\zeta_1,\zeta_2)\in \rm{span }\,\mathcal{G}:  \quad   \zeta_1, \zeta_2\leq 0\quad \text{  and }\quad 
 \max\left(\Vert \zeta_1\Vert_{L^\infty(\Omega)}, \Vert\zeta_2\Vert_{L^\infty(\Omega)}\right)= 1 \right\},
 }
\]
\noindent
and denote $\mathcal{K}:= \mathcal{K}_+\cup \mathcal{K}_-$.
Then using that either assumption (a) or assumption (b) is fulfilled, we can rewrite (\ref{estim_3}) as
\begin{equation}\label{eq:stability_infsup}
\eqalign{
 &\frac{\Vert \Lambda(\sigma_2,q_2)-\Lambda(\sigma_1,q_1) {\Vert_*}}{d_\Omega( (\sigma_1,q_1),(\sigma_2,q_2))}\\
&\geq\inf_{\substack{(\zeta_1, \zeta_2)\in \mathcal{K}\cr (\kappa_1,\tau_1),(\kappa_2,\tau_2)\in\mathcal{G}_{[c_1,c_2]\times[c_3,c_4]}}} \sup_{\| g \|=1} \Phi\left(g,(\zeta_1,\zeta_2),(\kappa_1,\tau_1),(\kappa_2,\tau_2)\right).
}
\end{equation}
The assertion of Theorem \ref{stability} follows if we can show that the right-hand side of (\ref{eq:stability_infsup})
is positive. Since $\Phi$ is continuous,  we can conclude that the function
\[
\left((\zeta_1,\zeta_2),(\kappa_1,\tau_1),(\kappa_2,\tau_2)\right)\mapsto \sup_{\| g \|=1} \Phi\left(g,(\zeta_1,\zeta_2),(\kappa_1,\tau_1),(\kappa_2,\tau_2)\right),
\]
is semi-lower continuous, so that it attains its minimum on  the compact set \\
$\mathcal{K}\times \mathcal{G}_{[c_1,c_2]\times[c_3,c_4]}  \times \mathcal{G}_{[c_1,c_2]\times[c_3,c_4]} $.
Hence, to prove Theorem \ref{stability}, it suffices to show that
\begin{equation}\label{estim_4}
\sup_{\| g \|=1} \Phi\left(g,(\zeta_1,\zeta_2),(\kappa_1,\tau_1),(\kappa_2,\tau_2)\right)>0, 
\end{equation}
\noindent
for all $
\left((\zeta_1,\zeta_2),(\kappa_1,\tau_1),(\kappa_2,\tau_2)\right)\in \mathcal{K}\times \mathcal{G}_{[c_1,c_2]\times[c_3,dc_4]}  \times \mathcal{G}_{[c_1,c_2]\times[c_3,c_4]}.$

 In order to prove that (\ref{estim_4}) holds true, let $\left((\zeta_1,\zeta_2),(\kappa_1,\tau_1),(\kappa_2,\tau_2)\right)\in \mathcal{K}\times \mathcal{G}_{[c_1,c_2]\times[c_3,c_4]}  \times \mathcal{G}_{[c_1,c_2]\times[c_3,c_4]}$.
\\
We first treat the case that $(\zeta_1,\zeta_2)\in \mathcal{K}_+ $. Then there exist  an open subset $\emptyset\neq B\subset \Omega$ and  a constant $0<\delta<1$,  such that either
\[
\eqalign{
&\text{(i)}\   \zeta_1|_{B}\geq \delta,  \text{ and } \zeta_2\geq 0, \textrm{ or }\cr
&\text{(ii)}\   \zeta_2|_{B}\geq \delta,  \text{ and } \zeta_1\geq 0.
}
\]
We use the localized potentials sequence in Theorem \ref{thm:locpot} to obtain  a  boundary load
$\tilde g\in L^2(\partial\Omega)$ with 
\begin{equation}\label{estim_loc_pot}
\int_{B}\vert u^{\tilde g}_{(\kappa_1,\tau_1)}\vert^2\,dx  \geq \frac{1}{\delta }\quad \textrm{ and } \quad 
\int_{B}  \vert\nabla u^{\tilde g}_{(\kappa_1,\tau_1)}\vert^2\,dx  \geq \frac{1}{\delta} \quad.
\end{equation}
In case (i),  this leads to
\[
 \eqalign{
&\Phi\left(\tilde g,(\zeta_1,\zeta_2),(\kappa_1,\tau_1),(\kappa_2,\tau_2)\right)
\geq \int_\Omega \zeta_1\vert\nabla u^{\tilde g}_{(\kappa_1,\tau_1)}\vert^2\,dx+
\int_\Omega\zeta_2 \vert u^{\tilde g}_{(\kappa_1,\tau_1)}\vert^2\, dx \cr
& \geq \int_{B} \zeta_1 \vert\nabla u^{\tilde g}_{(\kappa_1,\tau_1)}\vert^2\,dx
 \geq \delta\int_{B}  \vert\nabla u^{\tilde g}_{(\kappa_1,\tau_1)}\vert^2\,dx \geq 1,
 }
\]
and in case (ii), we  have
\[
 \eqalign{
&\Phi\left(\tilde g,(\zeta_1,\zeta_2),(\kappa_1,\tau_1),(\kappa_2,\tau_2)\right)
\geq \int_\Omega \zeta_1\vert\nabla u^{\tilde g}_{(\kappa_1,\tau_1)}\vert^2\,dx+
\int_\Omega\zeta_2 \vert u^{\tilde g}_{(\kappa_1,\tau_1)}\vert^2\, dx  \cr
&\geq   \int_{B} \zeta_2 \vert u^{\tilde g}_{(\kappa_1,\tau_1)}\vert^2\, dx
 \geq \delta \int_{B} \vert u^{\tilde g}_{(\kappa_1,\tau_1)}\vert^2\, dx  \geq 1.
 }
\]
Hence, in both  cases, 
\[
 \eqalign{
\sup_{\| g \|=1} \Phi(g,(\zeta_1,\zeta_2),(\kappa_1,\tau_1),(\kappa_2,\tau_2))
&\geq  \Phi\left(\frac{\tilde g}{\Vert \tilde g\Vert},(\zeta_1,\zeta_2),(\kappa_1,\tau_1),(\kappa_2,\tau_2)\right)\cr
&=\frac{1}{\Vert\tilde g\Vert^2} \Phi(\tilde g,(\zeta_1,\zeta_2),(\kappa_1,\tau_1),(\kappa_2,\tau_2))>0.  
}
\]
\noindent
For  $(\zeta_1,\zeta_2) \in  \mathcal{K}_-$, we can analogously use a localized potentials sequence for $(\kappa_2,\tau_2)$, and 
prove that 
  \[ \sup_{\| g \|=1} \Phi(g,(\zeta_1,\zeta_2),(\kappa_1,\tau_1),(\kappa_2,\tau_2))>0,\]
 and the proof of  Theorem \ref{stability} is completed.
\end{proof}
\begin{remark} All the results of section 3 and section 4  stay valid  when the\\
 Neumann-to-Dirichlet operator $\Lambda(\sigma,q)$
is extended to  $H^{-\frac{1}{2}}(\partial\Omega)\to H^{\frac{1}{2}}(\partial\Omega)$.   On these spaces, it is easily
shown  that $\Lambda(\sigma,q)$  is bijective, and its inverse is the Dirichlet-to-Neumann operator
$\Lambda_D(\sigma,q): f \to u^{(f)}_{\sigma,q}\vert_{\partial\Omega}$, where $u^{(f)}_{\sigma,q}$  solves 
\[
\label{probD}
\left\{
\eqalign{
&-\nabla\cdot(\sigma \nabla u^{(f)}_{\sigma,q})+qu^{(f)}_{\sigma,q} = 0\quad \text{ in }  \Omega,\\
&u^{(f)}_{\sigma,q} = f\quad \text{ on  }\partial \Omega.
}
\right.
\]

\end{remark}

\section{Numerical approach to solve the inverse problem}
In this section,  we are interested in the following inverse problem 
\begin{equation}
\label{invp_num}
\text{ \it  Find }  \sigma, q  \text{ \it knowing measurements}\,\,f_k=\Lambda(\sigma,q)g_k,
\,\,k=1,\ldots K,
 \end{equation}
 \noindent
where $f_k\in L^2(\partial\Omega)$ is a measurement of the  density of photons  corresponding to the
input flux $g_k$, and $K\in \mathbb{N}$ is the number of measurements. 
\\
\\
To solve the inverse problem (\ref{invp_num}) numerically,
we consider a minimization problem of  a Kohn-Vogelius type functional:
\begin{equation}\label{func-J}
\eqalign{
 \min_{(\sigma,q)\in \mathcal{G}_{[c_1,c_2]\times[c_3,c_4]}}J(\sigma,q)=  
&\sum_{k=1}^K\int_{\Omega}\left( \sigma \vert\nabla (u^{(g_k)}-u^{(f_k)})\vert^2
+q \vert u^{(g_k)}-u^{(f_k)} \vert^2\right)\,dx \\
&+\frac{\rho}{2}\int_\Omega (\sigma^2 +q^2)\,dx.
}
\end{equation}
\noindent
Here $u^{(g_k)}$  and $u^{(f_k)} $  solve  the following problems:
\begin{equation}
\label{prob1}
\left\{
\eqalign{
&-\nabla\cdot(\sigma \nabla u^{(g_k)})+qu^{(g_k)} = 0\quad \text{ in }  \Omega,\cr
&\sigma\partial_{\nu} u^{(g_k)}= g_k\quad \text{ on  }\partial \Omega,
}
\right.
\end{equation}
\begin{equation}
\label{prob2}
\left\{
\eqalign{
&-\nabla\cdot(\sigma \nabla u^{(f_k)})+qu^{(f_k)} = 0\quad \text{ in }  \Omega,\\
&u^{(f_k)} = f_k\quad \text{ on  }\partial \Omega.
}
\right.
\end{equation}
When dealing with reconstruction of the absorption parameter $q$ where $\sigma$  is assumed to be known,
the minimization problem (\ref{func-J})  is reduced to 
\begin{equation}\label{min2}
\eqalign{
 \min_{q \in \mathcal{F}_{[a,b]}}\mathcal{J}(q)=  
\sum_{k=1}^K\int_{\Omega}\left( \sigma |\nabla (u^{(g_k)}-u^{(f_k)})|^2
+q |u^{(g_k)}-u^{(f_k)}|^2\right)\,dx +\frac{\rho}{2}\int_\Omega  q^2\,dx.
}
\end{equation}
\begin{theorem}\label{diffr_J}
 The functional $J:L^\infty_+(\Omega)^2 \to \mathbb{R}$, defined in (\ref{func-J})
 is Fr\'echet differentiable, and  its Fr\'echet derivative  at  
 $(\sigma,q)\in  L^\infty_+(\Omega)^2$ in the  
 direction  $(\tilde \sigma,  \tilde q) \in L^\infty_+(\Omega)^2$ is given by
 \begin{equation}
 \label{DJ}
   \eqalign{
  J^\prime\left(\sigma, q \right)(\tilde \sigma,\tilde q)
 =&\sum_{k=1}^K\int_\Omega\left(  \tilde{\sigma}\left( \vert\nabla u^{(f_k)}\vert^2 - \vert \nabla 
 u^{(g_k)}\vert^2 \right)
 +\tilde{q}\left( (u^{(f_k)})^2 - (u^{(g_k)})^2 \right)\right)\,dx\cr
 &+\rho\int_\Omega \left(\sigma\tilde\sigma+q\tilde q\right)\,dx.
 }
 \end{equation}
\end{theorem}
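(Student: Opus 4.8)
The plan is to work entirely in the weak formulation and to exploit the cancellation of the state sensitivities that is characteristic of Kohn--Vogelius functionals. Write $u_k:=u^{(g_k)}$ for the Neumann solution of (\ref{prob1}) and $v_k:=u^{(f_k)}$ for the Dirichlet solution of (\ref{prob2}), and introduce
\[
a(\phi,\psi):=\int_\Omega\bigl(\sigma\nabla\phi\cdot\nabla\psi+q\phi\psi\bigr)\,dx,\qquad
b(\phi,\psi):=\int_\Omega\bigl(\tilde\sigma\nabla\phi\cdot\nabla\psi+\tilde q\,\phi\psi\bigr)\,dx,
\]
so that $a$ is the $H^1$-coercive form governing (\ref{transm}) and $b$ is the perturbation induced by $(\tilde\sigma,\tilde q)$. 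First I would show that the solution operators $(\sigma,q)\mapsto u_k\in H^1(\Omega)$ and $(\sigma,q)\mapsto v_k\in H^1(\Omega)$ are Fr\'echet differentiable on $L^\infty_+(\Omega)^2$; this follows from Lax--Milgram together with the implicit function theorem, since the data $g_k$ and $f_k$ do not depend on the coefficients. Differentiating the weak forms in the direction $(\tilde\sigma,\tilde q)$ shows that the sensitivities $\dot u_k,\dot v_k$ solve
\[
a(\dot u_k,w)=-b(u_k,w)\quad\forall w\in H^1(\Omega),\qquad
a(\dot v_k,w)=-b(v_k,w)\quad\forall w\in H^1_0(\Omega),
\]
with the crucial boundary behaviour $\sigma\partial_\nu\dot u_k|_{\partial\Omega}=0$ (homogeneous Neumann) and $\dot v_k|_{\partial\Omega}=0$ (homogeneous Dirichlet), inherited from the fact that $g_k$ and $f_k$ are fixed.

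Differentiating $J$ by the product rule then splits the derivative into an \emph{explicit} part, obtained by differentiating only the coefficients in front of the integrands, namely $\sum_k b(u_k-v_k,u_k-v_k)$, and an \emph{implicit} part coming from the state sensitivities, namely $2\sum_k a(u_k-v_k,\dot u_k-\dot v_k)$. The regularization term is trivially differentiable and contributes exactly $\rho\int_\Omega(\sigma\tilde\sigma+q\tilde q)\,dx$, so the whole content of the theorem is to simplify the first two sums into $\sum_k\bigl[b(v_k,v_k)-b(u_k,u_k)\bigr]$, which is precisely (\ref{DJ}).

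The heart of the argument is the evaluation of the implicit part. Expanding $a(u_k-v_k,\dot u_k-\dot v_k)$ into four terms and using the defining weak equations, I would test the Neumann equation for $u_k$ against $\dot v_k$ and use $\dot v_k|_{\partial\Omega}=0$ to get $a(u_k,\dot v_k)=\int_{\partial\Omega}g_k\dot v_k\,ds=0$; test the Dirichlet equation for $v_k$ against $\dot v_k\in H^1_0(\Omega)$ to get $a(v_k,\dot v_k)=0$; and use the sensitivity equation for $\dot u_k$, valid for all $w\in H^1(\Omega)$, to get $a(u_k,\dot u_k)=-b(u_k,u_k)$ and $a(v_k,\dot u_k)=-b(u_k,v_k)$. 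Substituting these and using symmetry of $b$ yields $a(u_k-v_k,\dot u_k-\dot v_k)=-b(u_k,u_k)+b(u_k,v_k)$, and combining with $b(u_k-v_k,u_k-v_k)=b(u_k,u_k)-2b(u_k,v_k)+b(v_k,v_k)$ collapses everything to $b(v_k,v_k)-b(u_k,u_k)$. Written out, this is $\int_\Omega\tilde\sigma\bigl(|\nabla v_k|^2-|\nabla u_k|^2\bigr)+\tilde q\bigl(v_k^2-u_k^2\bigr)\,dx$, which reproduces (\ref{DJ}).

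Finally I would upgrade this formal G\^ateaux computation to genuine Fr\'echet differentiability by controlling the second-order remainder: using the Lipschitz dependence of $u_k,v_k$ on $(\sigma,q)$ from the first step and the $H^1$-boundedness of the sensitivities, the quadratic remainders are estimated to be $o(\|(\tilde\sigma,\tilde q)\|_{L^\infty})$. The step I expect to be the main obstacle is the cancellation itself: one must keep careful track of \emph{which} test space each weak equation is valid on, because the simplification hinges entirely on the mismatched boundary conditions of the two sensitivities — $\dot v_k$ vanishing on $\partial\Omega$ kills the cross terms $a(u_k,\dot v_k)$ and $a(v_k,\dot v_k)$, while $\dot u_k$ being admissible as a global $H^1$ test function converts the remaining terms into the perturbation form $b$. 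Getting these adjoint relations exactly right, rather than the routine bookkeeping in the differentiability and remainder steps, is where the argument can go wrong.
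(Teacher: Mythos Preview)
Your argument is correct, but it follows a different route from the paper's own proof. The paper does not differentiate the volume integral directly; instead it first rewrites the fidelity term via Green's formula as
\[
\sum_k a(u_k-v_k,u_k-v_k)=\sum_k\langle g_k,\Lambda(\sigma,q)g_k\rangle+\sum_k\langle \Lambda_D(\sigma,q)f_k,f_k\rangle-2\sum_k\int_{\partial\Omega}g_kf_k\,ds,
\]
and then differentiates the boundary operators $\Lambda$ and $\Lambda_D=\Lambda^{-1}$, using a separate lemma in which the Fr\'echet differentiability of $\Lambda$ (and the explicit quadratic remainder bound) is obtained directly from the monotonicity estimates of Lemma~\ref{mono2}. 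The cross term $\int_{\partial\Omega}g_kf_k$ is constant in $(\sigma,q)$, so the cancellation you engineer through the sensitivities $\dot u_k,\dot v_k$ never appears explicitly: in the paper it is absorbed into the identity $\langle g_k,\Lambda'g_k\rangle=-b(u_k,u_k)$ and $\langle(\Lambda_D)'f_k,f_k\rangle=+b(v_k,v_k)$.

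Your approach is the classical adjoint calculation for Kohn--Vogelius functionals and has the advantage of being self-contained: it does not require recognizing the reformulation in terms of $\Lambda$ and $\Lambda_D$, and it makes transparent why the mismatched Neumann/Dirichlet boundary conditions are exactly what force the state sensitivities to drop out. The paper's approach, on the other hand, is shorter once Lemma~\ref{der_op} is in hand, reuses the monotonicity machinery developed earlier, and delivers the Fr\'echet remainder estimate for free from the two-sided bounds rather than from a separate Lipschitz-dependence argument for the solution operators.
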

We need the following lemma  to prove Theorem \ref{diffr_J}.
\begin{lemma}
\label{der_op}
The non-linear operator 
\[
 \Lambda(\sigma, q): L^\infty_+(\Omega)^2\to \mathcal{L}(L^2(\partial\Omega)),  \quad (\sigma,q)\to \Lambda(\sigma,q)
 \]
is   Fr\'echet differentiable and its derivative
\[ 
\Lambda^\prime:  L^\infty_+(\Omega)^2\to  \mathcal{L}(L^\infty(\Omega)^2, \mathcal{L}(L^2(\partial\Omega))
\]
is given by the bilinear form
\begin{equation}
\label{dervL}
 \int_{\partial\Omega}g(\Lambda^\prime(\sigma,q)(\delta_1,\delta_2)) h\,ds=-\int_\Omega \delta_1 \nabla u_{\sigma,q}^{(g)}\cdot \nabla u_{\sigma,q}^{(h)}\,dx
-\int_\Omega \delta_2 u_{\sigma,q}^{(g)} u_{\sigma,q}^{(h)}\,dx,
\end{equation}
for all $\sigma, q\in L^\infty_+(\Omega)$,  $\delta_1, \delta_2 \in L^\infty(\Omega)$,  $g,  h \in L^2(\partial\Omega)$  
 where  $u^{(g)}_{\sigma,q}\in H^1(\Omega)$  is solution  of the problem (\ref{transm}). 
\end{lemma}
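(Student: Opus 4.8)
The plan is to reduce the Fréchet differentiability of $\Lambda$ to that of the solution map $(\sigma,q)\mapsto u^{(g)}_{\sigma,q}$ and then to read off the derivative of $\Lambda$ from the weak formulation (\ref{eqv}) by a duality argument. First I would fix $(\sigma,q)\in L^\infty_+(\Omega)^2$ and a direction $(\delta_1,\delta_2)\in L^\infty(\Omega)^2$, and for small $t$ set $u_t:=u^{(g)}_{\sigma+t\delta_1,q+t\delta_2}$ and $u:=u^{(g)}_{\sigma,q}$. Subtracting the weak formulations for $u_t$ and $u$ and regrouping the coefficient perturbations onto the right-hand side gives
\[
\eqalign{
\int_\Omega(\sigma+t\delta_1)&\nabla(u_t-u)\cdot\nabla w\,dx+\int_\Omega(q+t\delta_2)(u_t-u)w\,dx\cr
&=-t\int_\Omega\delta_1\nabla u\cdot\nabla w\,dx-t\int_\Omega\delta_2 uw\,dx
}
\]
for all $w\in H^1(\Omega)$. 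Coercivity of the bilinear form, uniform for small $t$, yields $\|u_t-u\|_{H^1(\Omega)}=O(t)$; dividing by $t$ and letting $t\to0$ identifies the directional derivative $u':=\lim_{t\to0}(u_t-u)/t$ as the unique solution of
\[
\eqalign{
\int_\Omega\sigma\nabla u'&\cdot\nabla w\,dx+\int_\Omega qu'w\,dx\cr
&=-\int_\Omega\delta_1\nabla u\cdot\nabla w\,dx-\int_\Omega\delta_2 uw\,dx\quad\textrm{for all }w\in H^1(\Omega).
}
\]

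Next I would identify the derivative of $\Lambda$ by pairing with a second solution. By construction $\Lambda'(\sigma,q)(\delta_1,\delta_2)g=u'|_{\partial\Omega}$; since the right-hand side of (\ref{dervL}) is symmetric in $g,h$ and $\Lambda'(\sigma,q)(\delta_1,\delta_2)$ is self-adjoint, it suffices to evaluate $\int_{\partial\Omega}hu'\,ds$. For $h\in L^2(\partial\Omega)$ with associated solution $u^{(h)}:=u^{(h)}_{\sigma,q}$, testing the weak formulation of $u^{(h)}$ with $w=u'$ gives $\int_{\partial\Omega}hu'\,ds=\int_\Omega\sigma\nabla u^{(h)}\cdot\nabla u'\,dx+\int_\Omega qu^{(h)}u'\,dx$. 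The right-hand side is exactly the symmetric bilinear form appearing in the equation for $u'$ tested with $w=u^{(h)}$, hence equals $-\int_\Omega\delta_1\nabla u^{(g)}\cdot\nabla u^{(h)}\,dx-\int_\Omega\delta_2 u^{(g)}u^{(h)}\,dx$. This yields
\[
\eqalign{
\int_{\partial\Omega}&g\,(\Lambda'(\sigma,q)(\delta_1,\delta_2))h\,ds\cr
&=-\int_\Omega\delta_1\nabla u^{(g)}_{\sigma,q}\cdot\nabla u^{(h)}_{\sigma,q}\,dx-\int_\Omega\delta_2 u^{(g)}_{\sigma,q}u^{(h)}_{\sigma,q}\,dx,
}
\]
which is precisely (\ref{dervL}).

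The remaining and genuinely delicate point is upgrading this directional (Gâteaux) derivative to a Fréchet derivative with the asserted mapping properties. The structural observation is that the bilinear form $B_{\sigma,q}(u,w):=\int_\Omega\sigma\nabla u\cdot\nabla w\,dx+\int_\Omega quw\,dx$ is affine in $(\sigma,q)$, so the solution operator $S_{\sigma,q}\colon g\mapsto u^{(g)}_{\sigma,q}$ depends on the inverse of an operator that is affine in the coefficients; smoothness of operator inversion (a Neumann-series argument valid uniformly for $\|(\delta_1,\delta_2)\|_{L^\infty}$ small) then transfers to $S$, and composing with the fixed bounded trace operator gives Fréchet differentiability of $(\sigma,q)\mapsto\Lambda(\sigma,q)$ as a map into $\mathcal{L}(L^2(\partial\Omega))$. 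I expect the main obstacle to be the uniform remainder estimate, i.e.\ showing $\|\Lambda(\sigma+\delta_1,q+\delta_2)-\Lambda(\sigma,q)-\Lambda'(\sigma,q)(\delta_1,\delta_2)\|_{\mathcal{L}(L^2(\partial\Omega))}=o(\|(\delta_1,\delta_2)\|_{L^\infty})$ uniformly over $\|g\|=\|h\|=1$. This follows by writing the second-order remainder $u_\delta-u-u'$ as the solution of a boundary value problem with coercive left-hand side $B_{\sigma,q}$ and source functional $w\mapsto-\int_\Omega\delta_1\nabla(u_\delta-u)\cdot\nabla w\,dx-\int_\Omega\delta_2(u_\delta-u)w\,dx$, whose norm is $O(\|(\delta_1,\delta_2)\|_{L^\infty}\|u_\delta-u\|_{H^1})=O(\|(\delta_1,\delta_2)\|_{L^\infty}^2)$; the $g$-uniform well-posedness estimate and the continuity of the trace then close the argument.
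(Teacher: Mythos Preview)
Your argument is correct, but it follows a genuinely different route from the paper's. You differentiate the solution map $(\sigma,q)\mapsto u^{(g)}_{\sigma,q}$ directly via the weak formulation, identify the directional derivative $u'$ through a variational equation, read off (\ref{dervL}) by testing against $u^{(h)}_{\sigma,q}$, and finally control the remainder $u_\delta-u-u'$ by a second application of Lax--Milgram. The paper instead never looks at the derivative of the solution map: it works purely at the level of the quadratic form $g\mapsto\langle g,\Lambda(\sigma,q)g\rangle$ and exploits the two-sided monotonicity estimate (Lemma~\ref{mono2}) to sandwich $\langle g,(\Lambda(\sigma,q)-\Lambda(\sigma+\delta_1,q+\delta_2))g\rangle$ between $\int_\Omega(\delta_1|\nabla u|^2+\delta_2 u^2)\,dx$ and $\int_\Omega(\frac{\sigma\delta_1}{\sigma+\delta_1}|\nabla u|^2+\frac{q\delta_2}{q+\delta_2}u^2)\,dx$. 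Since the upper endpoint is exactly $-\langle g,\Lambda'(\sigma,q)(\delta_1,\delta_2)g\rangle$ and the gap between the two endpoints is $\int_\Omega(\frac{\delta_1^2}{\sigma+\delta_1}|\nabla u|^2+\frac{\delta_2^2}{q+\delta_2}u^2)\,dx=O(\|(\delta_1,\delta_2)\|_\infty^2)$, the Fr\'echet remainder estimate drops out in one line; self-adjointness then lifts the diagonal identity to the bilinear form (\ref{dervL}) by polarization.

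The trade-off: the paper's proof is shorter and reuses the monotonicity machinery already built for the stability results, but it is tied to the self-adjoint, sign-definite structure of the problem. Your approach is longer and requires tracking an auxiliary PDE for $u'$, but it is entirely self-contained, establishes the off-diagonal formula (\ref{dervL}) directly rather than via polarization, and would carry over unchanged to non-self-adjoint or complex-coefficient settings where no monotonicity sandwich is available.
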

\begin{proof}
It follows from the monotonicity relation (\ref{eqmono})  that for all  sufficiently small  $\delta_1, \delta_2 \in L^\infty(\Omega)$  such that 
$\sigma+\delta_1, q+\delta_2\in L^\infty_+(\Omega)$
\[
 \eqalign{
&\int_\Omega  (\delta_1 \vert \nabla u_{\sigma,q}^{(g)}\vert^2+\delta_2 (u_{\sigma,q}^{(g)})^2)\,dx \geq \int_{\partial \Omega} g \left(\Lambda(\sigma, q)-\Lambda(\sigma+\delta_1, q+\delta_2)\right) g\, ds \cr
&\geq \int_\Omega\left(\sigma-\frac{\sigma^2}{\sigma+\delta_1}  \right)\vert \nabla u^{(g)}_{\sigma,q}\vert^2\,dx
+\int_\Omega\left(q-\frac{q^2}{q+\delta_2}  \right)(u^{(g)}_{\sigma,q})^2\,dx.
}
\]
Thus 
\begin{equation}
 \eqalign{
&\norm{ \Lambda(\sigma, q)-\Lambda(\sigma+\delta_1, q+\delta_2)-\Lambda^\prime(\sigma,q)(\delta_1,\delta_2) }_{\mathcal{L}(L^2(\partial\Omega))}\cr
&=\sup_{g\in L^2(\partial\Omega)} \left| \int_{\partial \Omega} g \left(\Lambda(\sigma,q)-\Lambda(\sigma+\delta_1,q+\delta_2)
-\Lambda^\prime(\sigma,q)(\delta_1,\delta_2)\right)\,ds\right|\cr
&\leq  \int_\Omega \left( \left(\frac{\delta^2_1}{\sigma+\delta_1}\right) \vert \nabla u^{(g)}_{\sigma,q}\vert^2
+\left(\frac{\delta^2_2}{q+\delta_2}\right) (u^{(g)}_{\sigma,q})^2\right)\,dx= O\left(\norm{(\delta_1,\delta_2)}^2_{\infty}\right).
}
\end{equation}
This shows that $\Lambda$  is   Fr\'echet differentiable, and its derivative is given by (\ref{dervL}).
\end{proof}
\begin{proof}[Proof of Theorem \ref{diffr_J}]
From the definition of the functional $J$, and applying\\
 Green's formula once, we have 
\begin{equation}
 \eqalign{
 J(\sigma,q)&=  \sum_{k=1}^K\int_{\Omega} \sigma |\nabla u^{(g_k)}|^2\,dx
+\sum_{k=1}^K\int_{\Omega} q|u^{(g_k)}|^2\,dx
+ \sum_{k=1}^K\int_{\Omega} \sigma |\nabla u^{(f_k)}|^2\,dx\cr
&+\sum_{k=1}^K\int_{\Omega} q|u^{(f_k)}|^2\,dx
-2\sum_{k=1}^K\int_{\partial\Omega} g_kf_k\,ds
+\frac{\rho}{2}\int_\Omega (\sigma^2+q^2)\,dx\cr
&= \sum_{k=1}^K \langle g_k,\Lambda(\sigma,q)g_k \rangle+   \sum_{k=1}^K \langle \Lambda_D(\sigma,q)f_k, f_k \rangle\\
&-2\sum_{k=1}^K\int_{\partial\Omega} g_kf_k\,ds+\frac{\rho}{2}\int_\omega (\sigma^2+q^2)\,dx.
}
\end{equation}
\noindent
From Lemma \ref{der_op},  $\Lambda(\sigma,q)$ is Fr\'echet differentiable with
\[
 \langle g_k,\Lambda^\prime(\sigma,q)(\tilde \sigma,\tilde{q})g_k\rangle
 =-\int_{\Omega}\left(\tilde{\sigma}\vert \nabla u^{(g_k)}\vert^2 
 +\tilde{q}(u^{(g_k)})^2 \right)\,dx,
\]
and
\[
\eqalign{
 \langle (\Lambda_D(\sigma,q))^\prime(\tilde \sigma,\tilde{q})f_k,f_k\rangle&=
 \langle (\Lambda(\sigma,q)^{-1})^\prime(\tilde \sigma,\tilde{q})f_k,f_k\rangle\cr
 &= \int_{\Omega}\left(\tilde{\sigma}\vert \nabla u^{(f_k)}\vert^2 
 +\tilde{q}(u^{(f_k)})^2 \right)\,dx  .
}
\]
Since   $\int_{\partial\Omega} g_kf_k\,ds$ is constant and $(\sigma,q)\to \int_\Omega (\sigma^2+q^2)\,dx$
is Fr\'echet differentiable,  we conclude that  $J$ is Fr\'echet differentiable and its derivative is given by (\ref{DJ}).
\end{proof}
\begin{remark}
Using the same techniques, we can prove that  the functional $\mathcal{J}$ is Fr\'echet differentiable  and its 
derivative is given by:
\[
\mathcal{J}^{\prime}(q)\tilde{q}=\sum_{k=1}^K\int_\Omega
 \tilde{q}\left( (u^{(f_k)})^2 - (u^{(g_k)})^2 \right)\,dx
 +\rho\int_\Omega \tilde q q\,dx.
\]
\end{remark}
\section{Implementation details and numerical examples}
We provide in this section two  numerical examples that illustrate the performance of our numerical method.
In the first example, we reconstruct the spatial distribution of the absorption coefficient while keeping the
diffusion  coefficient fixed.  In the second example,  we show  that  both optical properties are reconstructed simultaneously.

 When dealing with reconstruction with noise data,  the choice of the 
regularization parameter $\rho$  in (\ref{invp_num})  is crucial.  Usually,
it is determined using a knowledge of the noise level by,  e.g., the discrepancy principle.
However, in practice,  the noise level may be unknown, rendering such rules inapplicable.
To overcome  this issue, we propose a heuristic choice rule based on the following balancing
principle \cite{clason2010semismooth}:  Choose $\rho$  such  that 
\begin{equation}
\label{balance}
 (\beta-1) \sum_{k=1}^K\int_{\Omega}\left( \sigma \vert\nabla (u^{(g_k)}-u^{(f_k)})\vert^2+q \vert u^{(g_k)}-u^{(g_k)}\vert^2\right)\,dx -
\frac{\rho}{2}\int_\Omega (\sigma^2 +q^2)\,dx=0.
\end{equation}
 The idea  behind  this  principle is to balance the data fitting term with the
penalty term and  the weight  $\beta>1$  controls the trade-offš between them. The  choice rule
 does not require  the  knowledge of the noise level, and has been successfully applied
to linear and non linear  inverse problems \cite{clason2010duality,clason2010semismooth,clason2012semismooth,clason2012fitting,ito2011regularization}.\\
\noindent
When dealing only  with the reconstruction of $q$, the balancing  equation (\ref{balance})  is reduced to 
\begin{equation}
\label{balance1}
 (\beta-1) \sum_{k=1}^K\int_{\Omega}\left( \sigma |\nabla (u^{(g_k)}-u^{(f_k)})|^2
+q |u^{(g_k)}-u^{(f_k)}|^2\right)\,dx -
\frac{\rho}{2}\int_\Omega q^2\,dx=0.
\end{equation}
For our problem,  we compute a solution $\rho^*$ to the balancing equation (\ref{balance}) or  (\ref{balance1}) by the fixed point algorithm proposed in \cite{clason2012semismooth,clason2012fitting}.\\
\noindent
We consider the following setup for our numerical examples: The domain  $\Omega$ under consideration is the two dimensional unit disk centered at the origin. We use a Delaunay triangular 
mesh and a standard finite element method with piecewise  finite elements to numerically compute the states for our problem. The  measurements  $ f_k$ are computed synthetically 
by solving the direct problem (\ref{transm}).  To simulate noisy  data, the measurements $f_k$ are corrupted by adding
a normal Gaussian noise with mean zero and standard deviation  $\epsilon\Vert f_k\Vert_\infty$ where $\epsilon$  is a parameter.
To  avoid the so called ''inverse crime'', the inverse problem  is solved using $1016$ elements,  while the data $f_k$ is 
computed  with   $4064$  elements.  For all the computations  we have used  Matlab R2018a.
\subsection{Example 1:  Reconstructing $q$}\label{examp1}
In the following numerical results,   the diffusion coefficient $\sigma$  is  assumed to be  known,  and  is  given by 
$
\sigma=1\chi_{\Omega\setminus \overline{\omega}}+2\chi_{\omega},
$
where $\omega$ is the disk of radius $1/2$ centered at the origin.
The exact absorption coefficient to be recovered is given  by 
\[
q^{\dagger}(x_1,x_2)=1+\cos(\pi x_1) \cos(\pi x_2) \chi_{(\Vert (x_1,x_2)\Vert_{\infty}< 0.5)}.
\]
We obtain measurements $f_k$ corresponding to the fluxes 
\[
 g_k=10+\sin(k\theta), \quad \theta\in [0,2\pi], \quad k = 1,\ldots 5. 
 \]
 and we reconstruct $q$ by minimizing the functional 
 \[
 \mathcal{J}(q)=  
\sum_{k=1}^5\int_{\Omega}\left( \sigma |\nabla (u^{(g_k)}-u^{(f_k)})|^2
+q |u^{(g_k)}-u^{(f_k)}|^2\right)\,dx +\frac{\rho}{2}\int_\Omega  q^2\,dx,
\]
 in the space of piecewise  constant functions on the FEM mesh.\\
 \noindent
Figure \ref{fig1}  shows the true and the  reconstructed absorption  images  with    noise  free synthetic  data and without regularization.  Figure \ref{fig2} shows   the  reconstructed absorption images   with respect to  different   initialization and noise levels. The quality of  the reconstruction is satisfactory  and depend on the initialization of the algorithm.

\begin{figure}[ht!] 
\begin{center}
\begin{tabular}{c c c}
 \includegraphics[height=0.2\textheight]{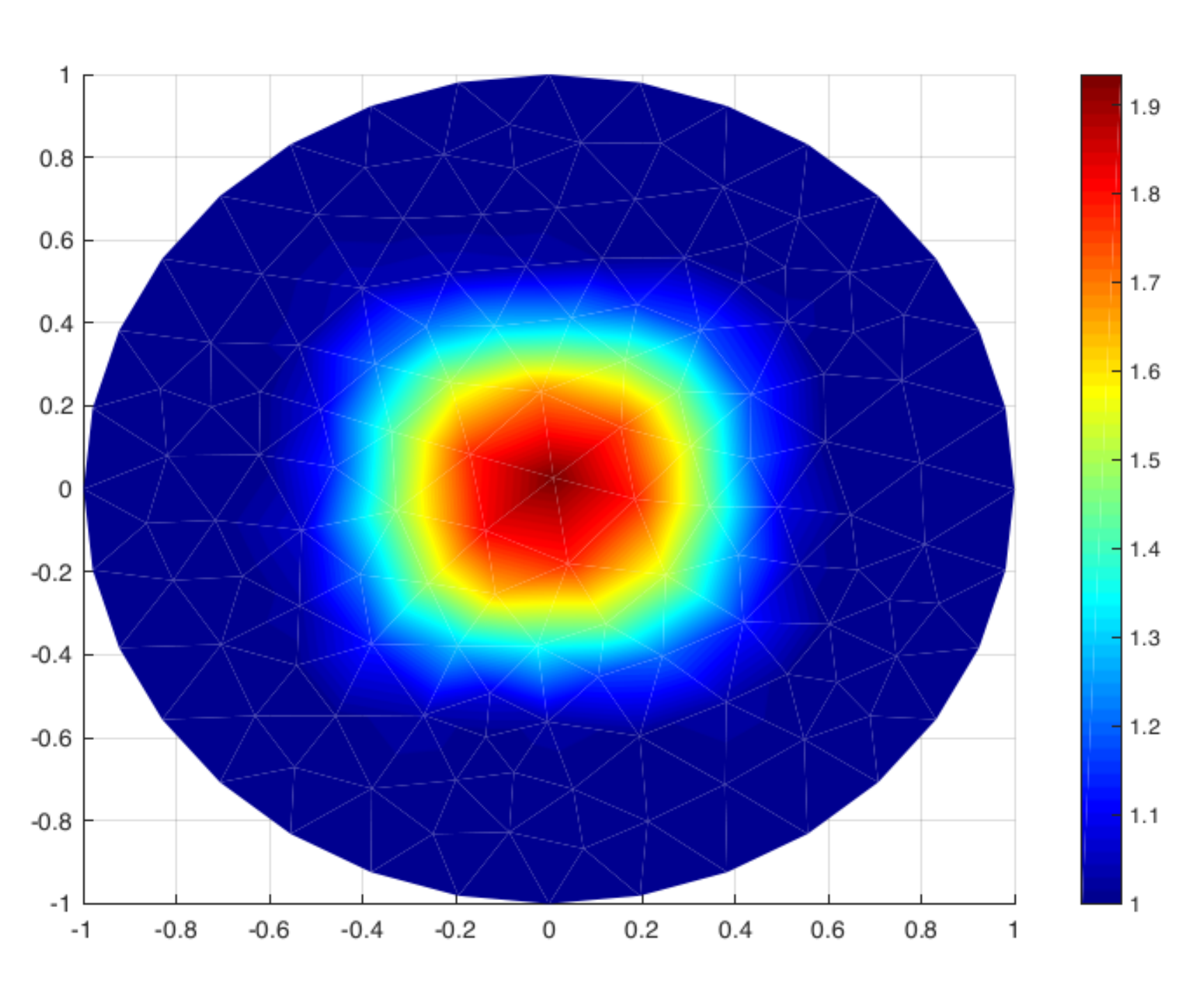}  &
 \includegraphics[height=0.2\textheight]{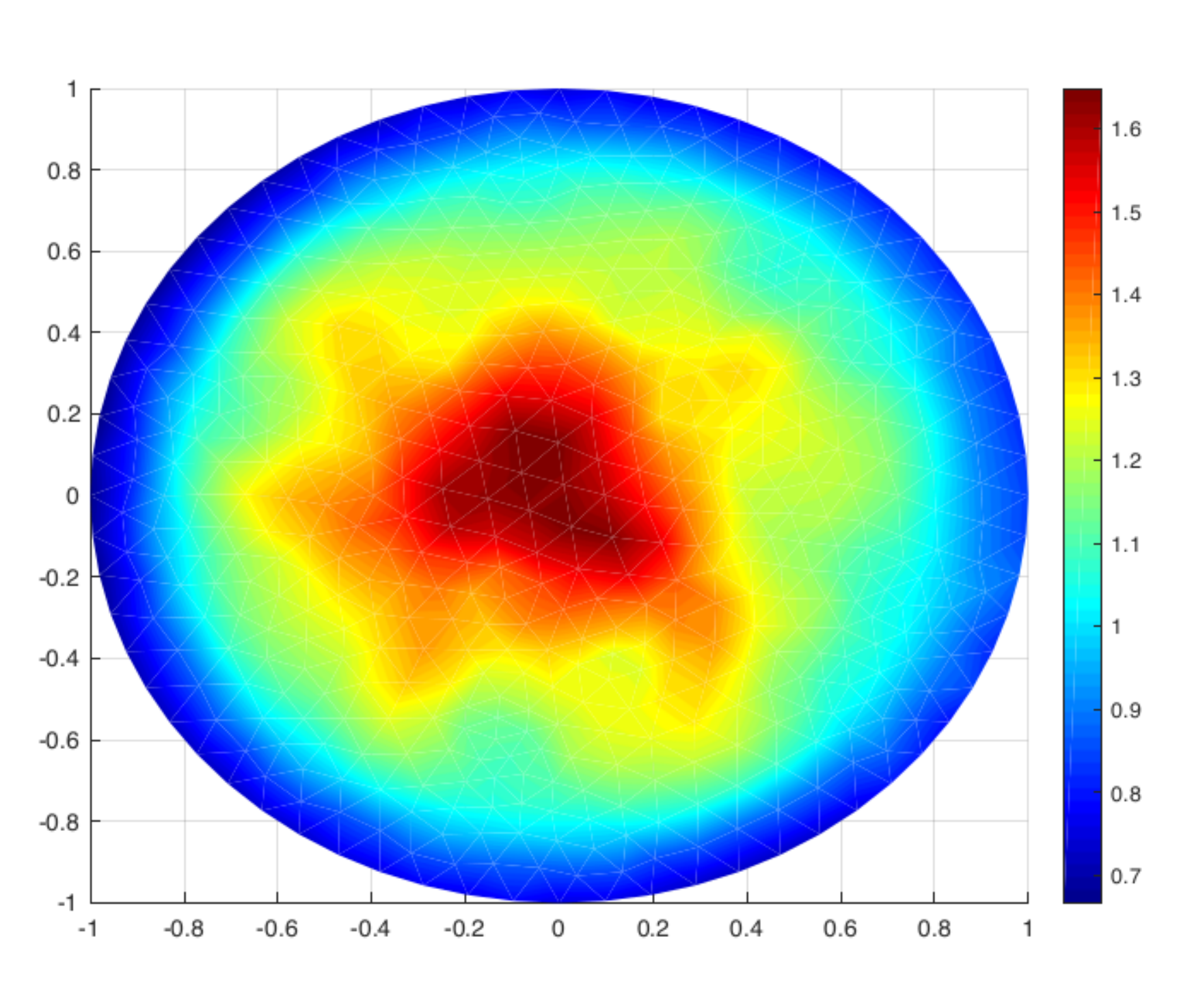} 
 \end{tabular}
 \end{center}
  \caption{On the left the true absorption  image  and  the  right the reconstructed absorption image   with $\epsilon=0$,  $\rho=0$  
  and  initialization  $ q(x_1,x_2)=1$.}
    \label{fig1}
 \end{figure}
\begin{figure}[ht!]
\begin{center}
\begin{tabular}{c c c}
 \includegraphics[height=0.2\textheight]{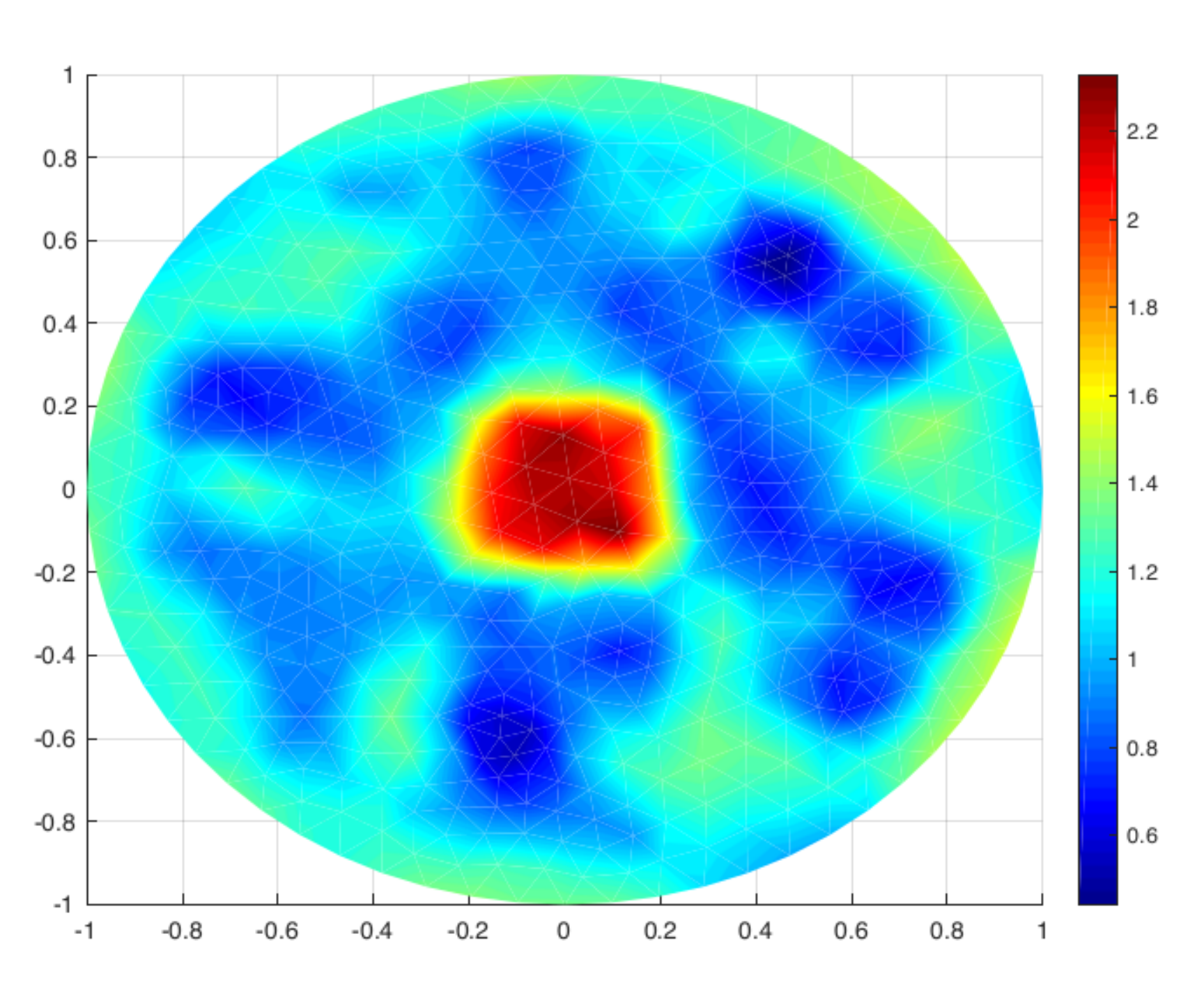}  &
 \includegraphics[height=0.2\textheight]{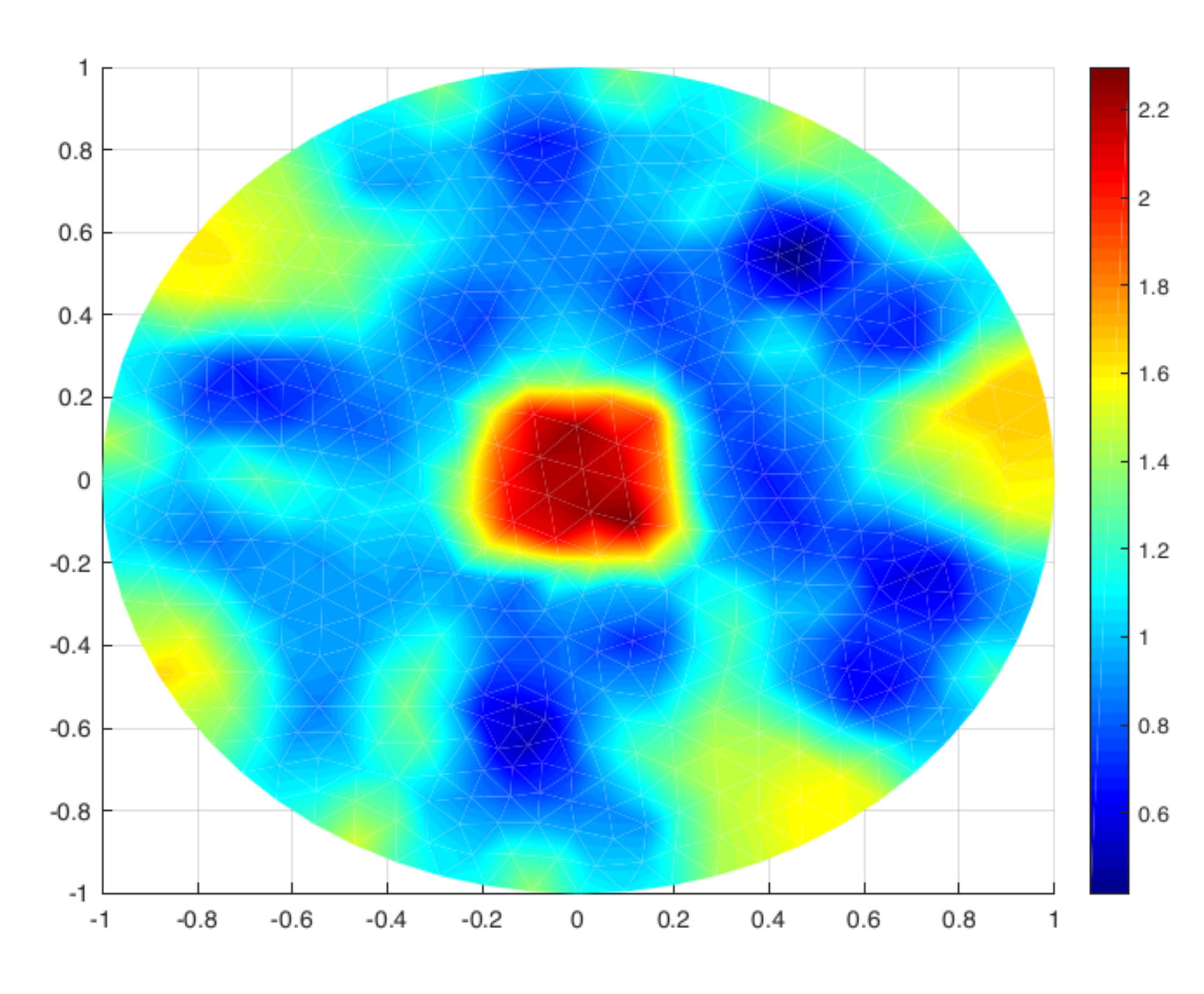}
 \end{tabular}
 \end{center}
  \caption{On the left   the reconstructed absorption image  with $\varepsilon=0$ and $\rho=0$.  On the  right  reconstructed absorption image  with 
  $\varepsilon= 0.05$ and $\rho=0.0000001672 $.  In both cases, the  initialization  is taken as $q(x_1,x_2)=(\vert x_1 \vert <0.2) (\vert x_2 \vert <0.2) $.}
  \label{fig2}
 \end{figure}
\subsection{Examples 2:  Reconstructing $\sigma$  and $q$  simultaneously}
In this example the  exact parameters to be recovered are given by
\[
 \sigma^{\dagger}(x)= 2\chi_{D_1}+3\chi_{D_2}+ 1\chi_{\Omega\setminus{\overline{D_1\cup D_2}}}, \quad 
q^{\dagger}(x)= 3\chi_{D_3}+4\chi_{D_4}+ 1\chi_{\Omega\setminus{\overline{D_3\cup D_4}}},
\]
where  $D_1$,  $ D_2$,  $ D_3$  and $ D_4$  are  given  by: 
\[
D_1=\left\{(x_1,x_2)\in \R^2: (x_1-0.5)^2+x_2^2<0.2^2\right\}, 
\]
\[
D_2=\left\{(x_1,x_2)\in \R^2: (x_1+0.5)^2+x_2^2<0.2^2\right\},
\]
\[
D_3=\left\{(x_1,x_2)\in \R^2: x_1^2+(x_2-0.5)^2<0.2^2\right\}, 
\]
\[
D_4=\left\{(x_1,x_2)\in \R^2: x_1^2+(x_2+0.5)^2<0.2^2\right\}.
\]
We use  measurements $f_k$ correspond to the fluxes $g_k(\theta)= \sin(k\theta),  \theta\in[0,2\pi], \quad k=1,\ldots 5$ and we reconstruct $\sigma, q$ by minimizing the function 
\[
 J(\sigma,q)=\sum_{k=1}^5\int_{\Omega}\left( \sigma |\nabla (u^{(g_k)}-u^{(f_k)}|^2
+q |u^{(g_k)}-u^{(f_k)}|^2\right)\,dx +\frac{\rho}{2}\int_\Omega (\sigma^2 +q^2)\,dx,
\]
 in the space of piecewise  constant functions on the FEM mesh. The initialization  is  given by 
 \[ 
 (\sigma(x),q(x))= 
 (1.1\chi_{D_1}+1.2\chi_{D_2}+ 1\chi_{\Omega\setminus{\overline{D_1\cup D_2}}},
 1.1\chi_{D_3}+1.2\chi_{D_4}+ 1\chi_{\Omega\setminus{\overline{D_3\cup D_4}}}).
 \]
 Figure \ref{fig3}. shows the true  diffusion  image  and  the reconstructed diffusion image  with  noise free  synthetic  data  and without regularization. Figure \ref{fig4}. depicts  the reconstructed  diffusion  images with   different noise  synthetic data and  regularization. Figure \ref{fig5}. depicts   the reconstructed  absorption image  with  noise free synthetic data and without regularization. Figure \ref{fig6}. shows  the reconstructed absorption image  with different noise synthetic data    and regularization.
 
In this example, the quality of reconstructions  is satisfactory  and the regularization technique  that we have  imposed here allows us  to estimate the optical  properties  in the presence of moderate noise  with  accuracy.   Let us mention that in  \cite{arridge1998gradient} the  authors  introduced a  gradient-based optimisation scheme  to  reconstruct  the optical  properties  without regularization 
of   the minimization problem.  A  crosstalk  problem    appeared in the  reconstruction  of the  profiles.  This is maybe due 
to   the non uniqueness of the inverse problem which is know to be severally ill-posed.   
 \begin{figure}[ht!]
\begin{center}
\begin{tabular}{c c c}
 \includegraphics[height=0.2\textheight]{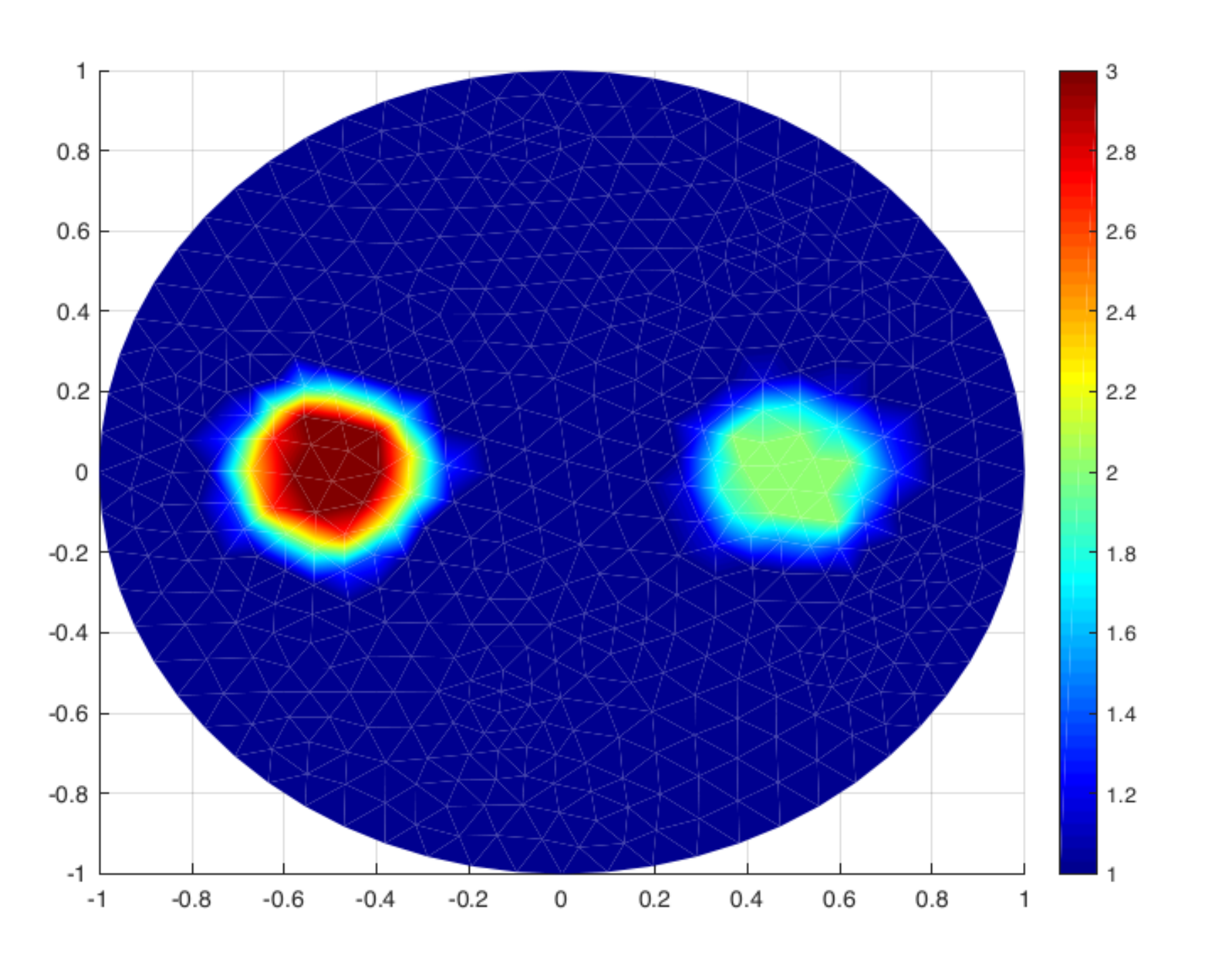}&
 \includegraphics[height=0.2\textheight]{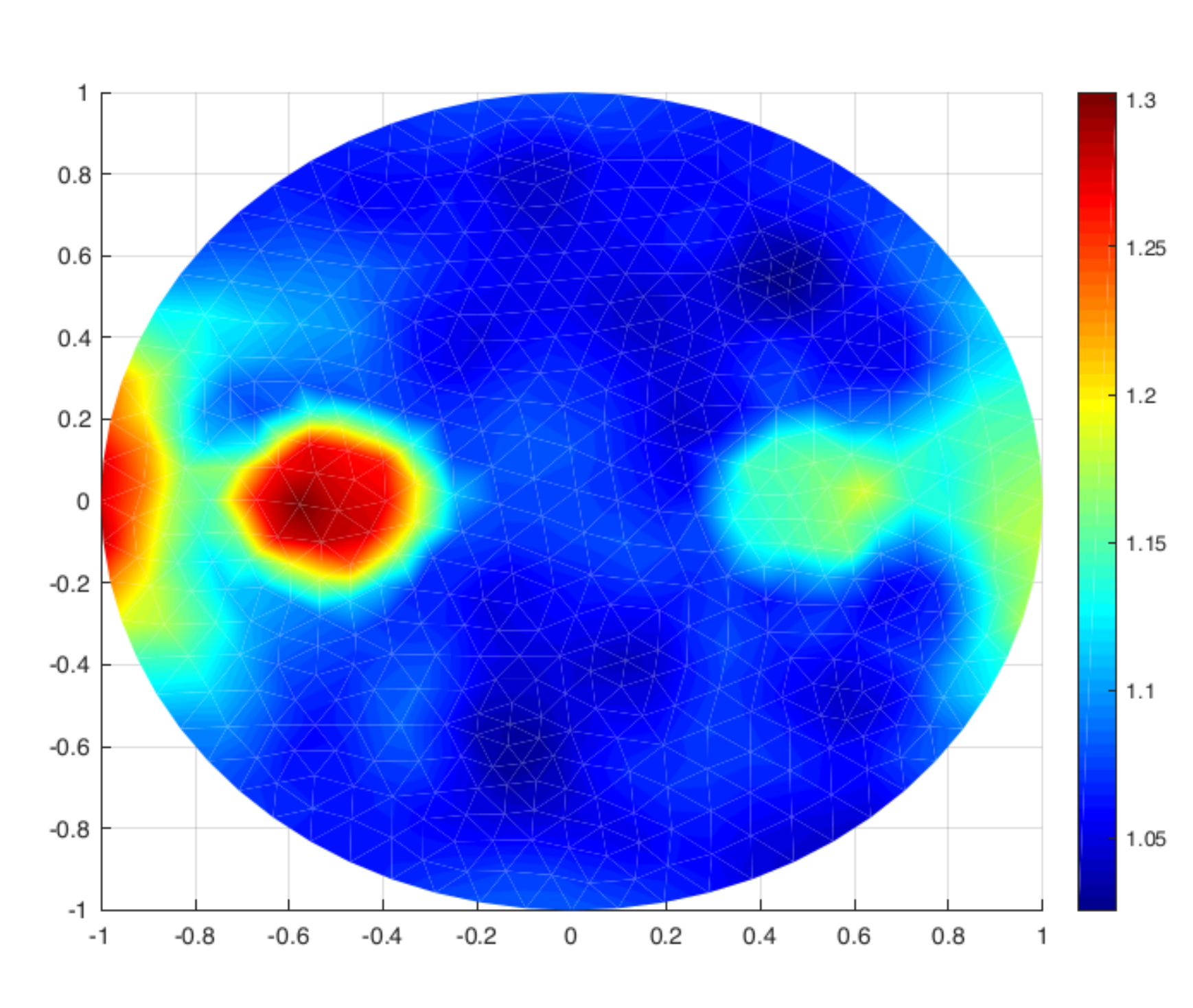} 
\end{tabular}
 \caption{On the left the true diffusion image and on the right the   reconstructed diffusion image  with
 $\varepsilon=0$,  $\rho=0$ .}
 \label{fig3}
\end{center}
\end{figure}
\begin{figure}[ht!]
\begin{center}
\begin{tabular}{c c c}
 \includegraphics[height=0.2\textheight]{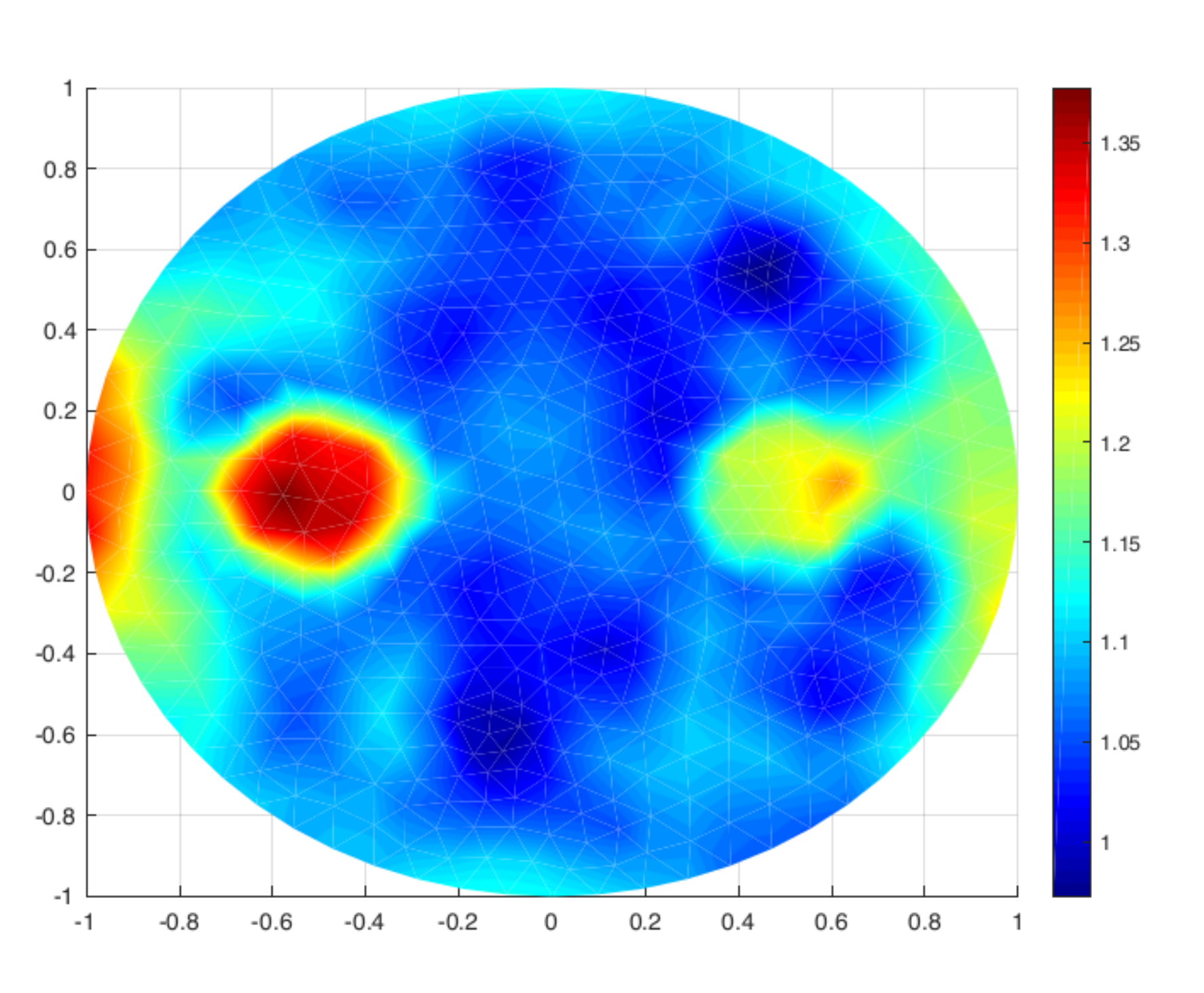}&
 \includegraphics[height=0.2\textheight]{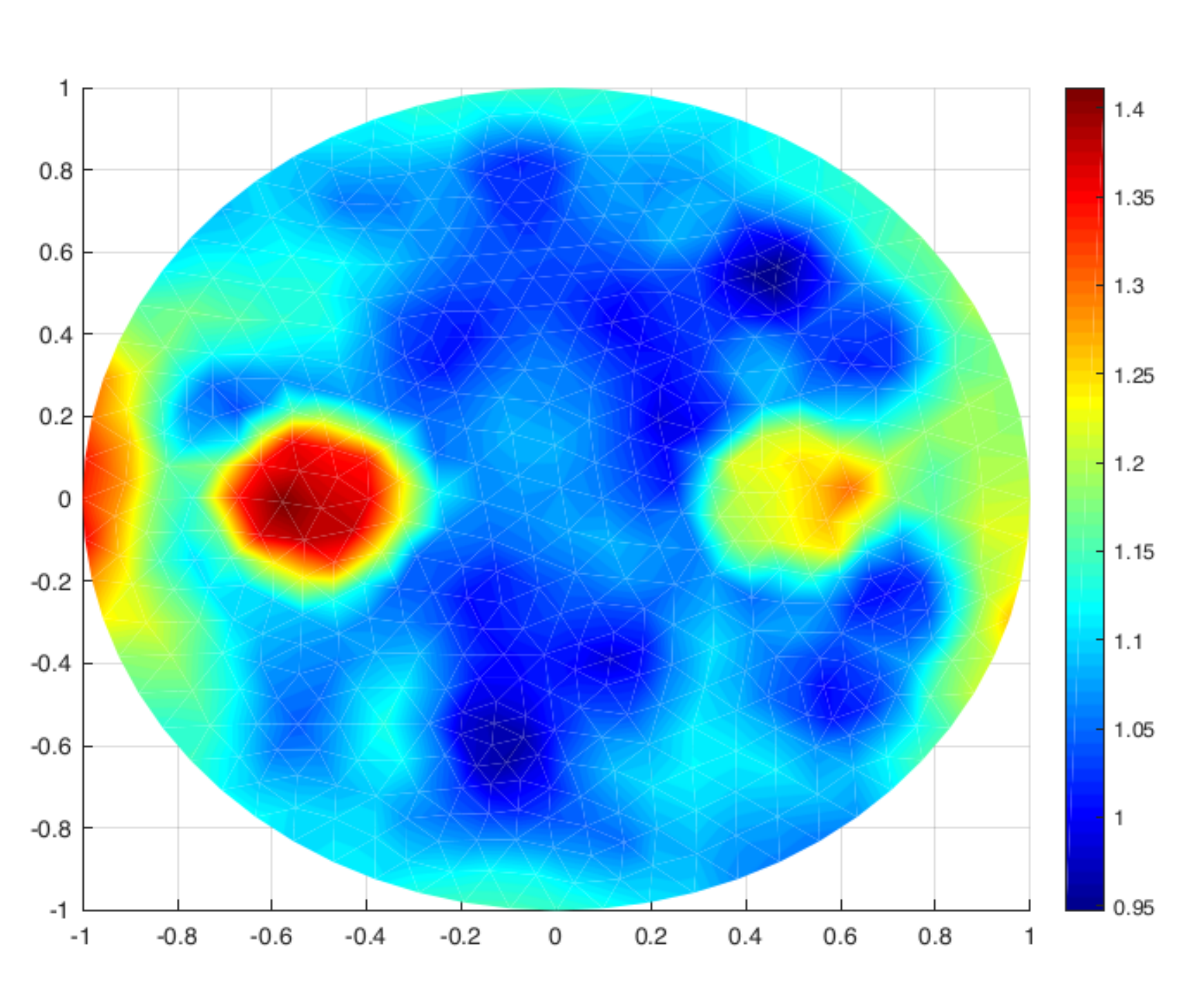} 
\end{tabular}
 \caption{On the left the reconstructed diffusion image with $\varepsilon=0.03$,  $\rho=1.674\times 10^{-6}$ and on the right the reconstructed diffusion  image 
 with $\varepsilon=0.05$  and $\rho=3.192\times 10^{-7}$.}
 \label{fig4}
\end{center}
\end{figure}
\begin{figure}[ht!]
\begin{center}
\begin{tabular}{c c c}
 \includegraphics[height=0.2\textheight]{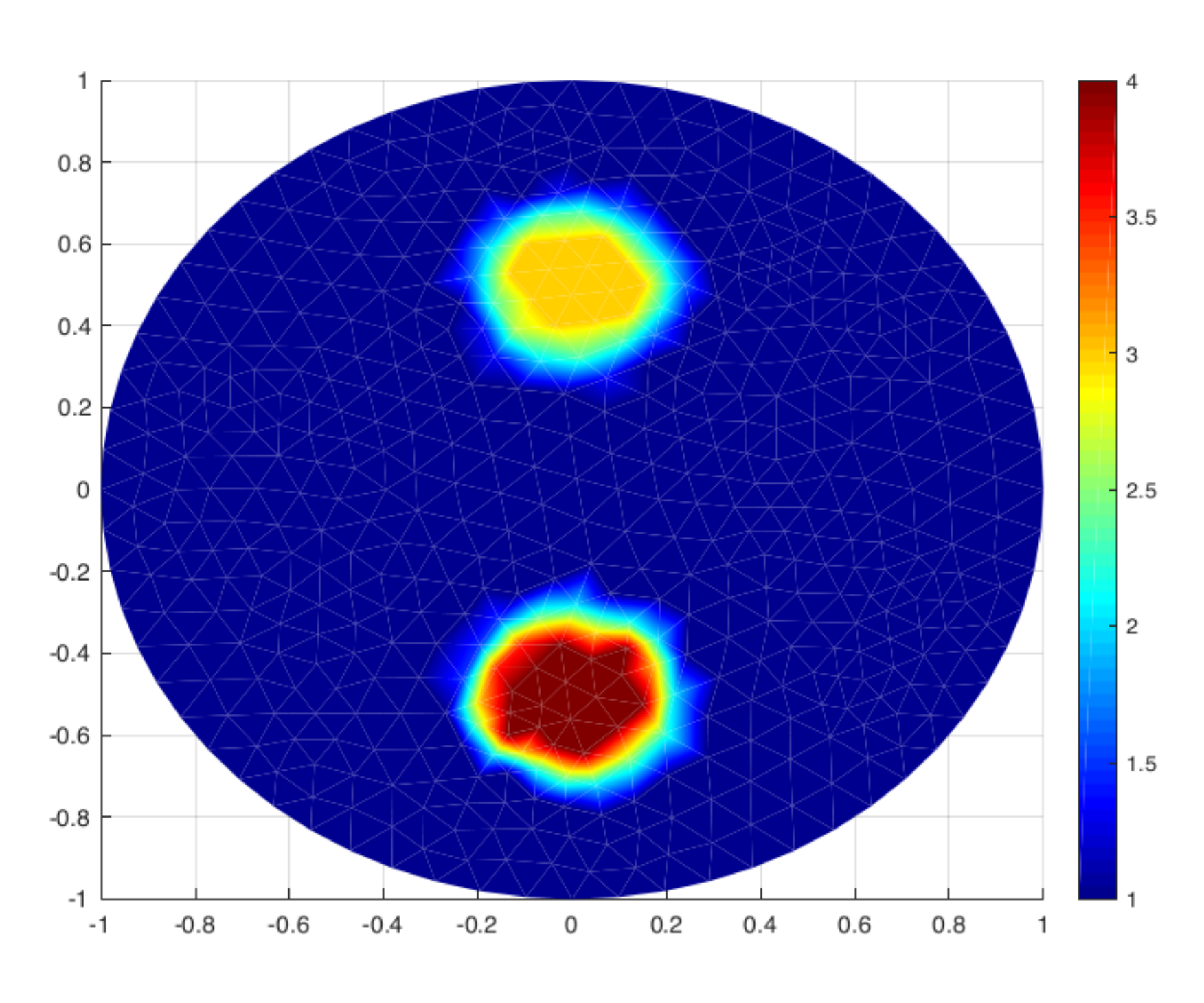}&
 \includegraphics[height=0.2\textheight]{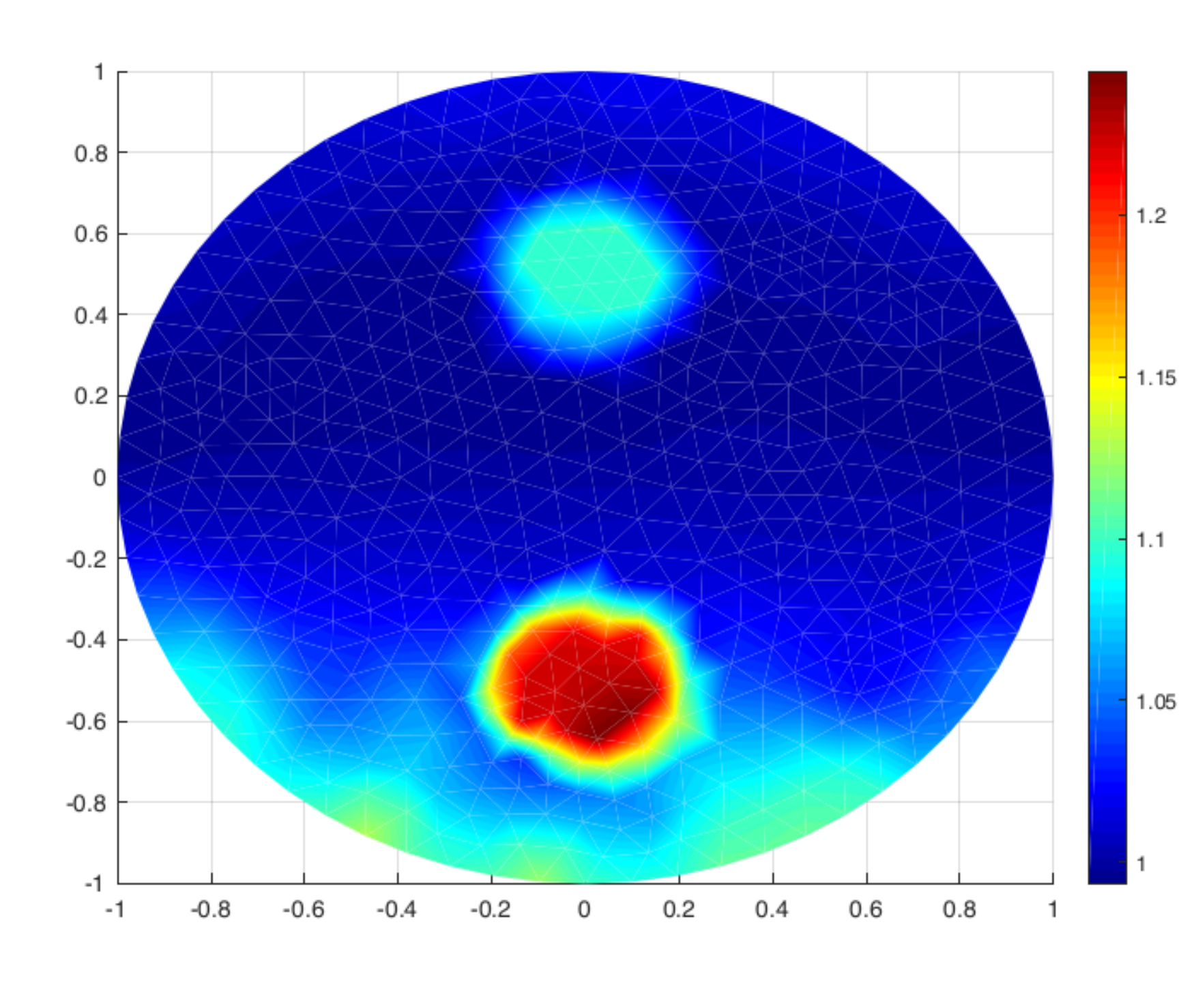} 
\end{tabular}
 \caption{On the left the  exact absorption  and on the right the reconstructed  absorption with $\varepsilon=0$,  and $\rho=0$.}
 \label{fig5}
\end{center}
\end{figure}
\begin{figure}[ht!]
\begin{center}
\begin{tabular}{c c c}
 \includegraphics[height=0.2\textheight]{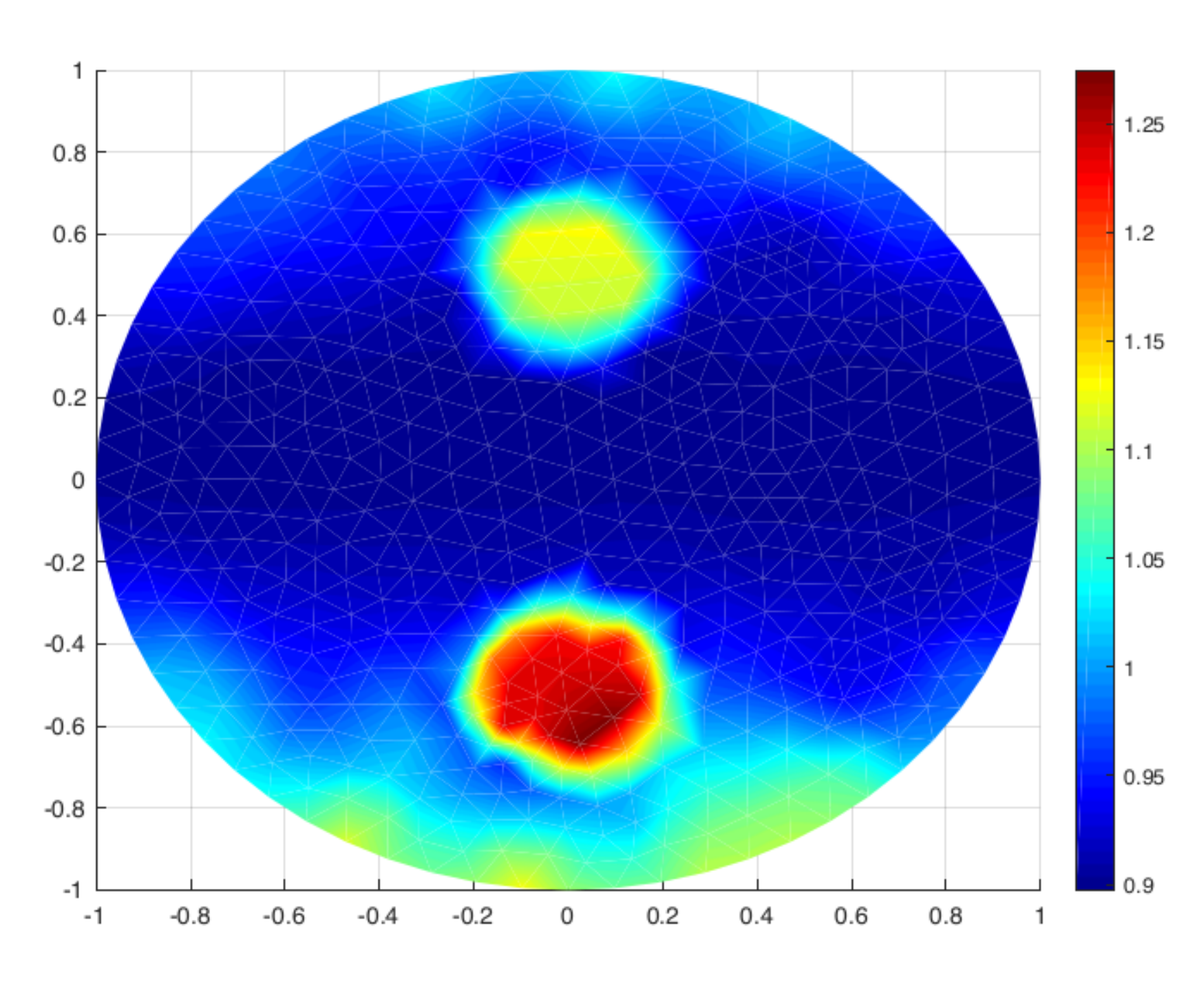}&
 \includegraphics[height=0.2\textheight]{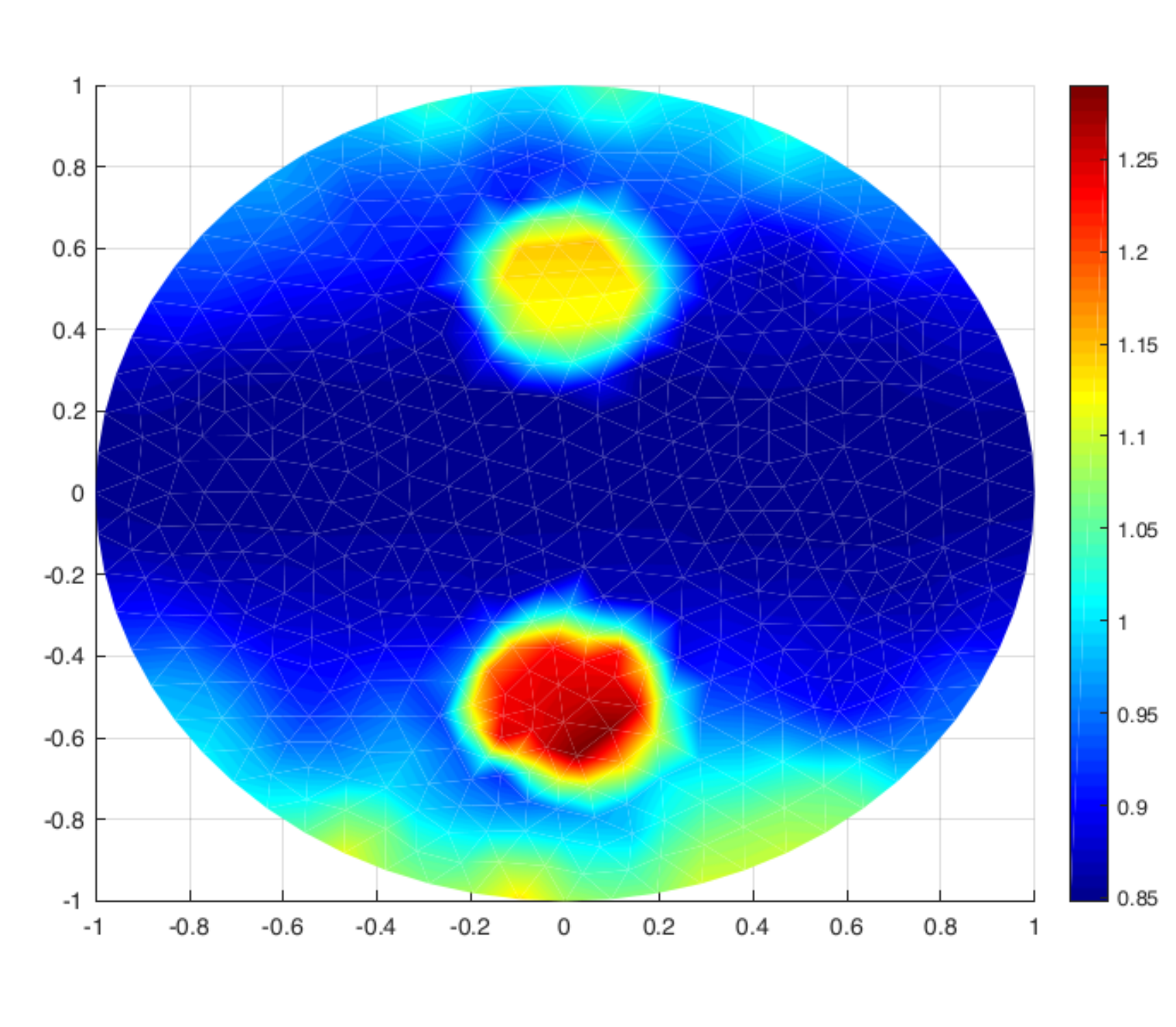} 
\end{tabular}
 \caption{On the left the reconstructed absorption with $\varepsilon=0.03$  and  $\rho= 5.82611\times 10^{-6}$,   and on the right the reconstructed absorption with $\varepsilon=0.05$  and  $\rho=1.35438\times 10^{-7}$.}
 \label{fig6}
\end{center}
\end{figure}
\section{Conclusion}
In this paper, we have shown a global uniqueness and  Lipschitz stability results  
 when  a-priori smoothness assumptions are imposed on the parameters ($\sigma$ piecewise constant and $q$ piecewise-analytic).  We have  also shown  for  a  given setting     that the Lipschitz  constant  can be computed by solving a finite numbers of  well posed PDEs.  The proofs  rely on the monotonicity of the NtD operator    combined with the techniques of localized potentials.   These  techniques seem  simple compared  to  the techniques of   Carleman estimates and complex geometrical  optics(CGO) used in  the litterature.   
 
 We have formulated the inverse problem   as a regularized problem    using    a Khon-Vogelius  functional. 
 In the inversion procedure, the forward model is discretized  using a finite  element  method. We solve the regularized  problem by using a  Quasi-Newton method with BFGS type updating rule for the Hessian matrix. Numerical reconstructions based on synthetic data provide results that are in agreement with the expected reconstructions  and no crosstalk between 
 the parameters is observed. 
 Let us mention  that  our   numerical method   depend strongly on the initialization, the measurements and the  mesh size.   
  When considering   the reconstruction of  $\sigma$  and $q$ simultaneously,   our algorithm  can't  reconstruct the jump  sets 
  of the parameters.   A shape optimization procedure may   be  used to  reconstruct the parameters and their jump sets  simultaneously.

\bibliographystyle{siam}
\bibliography{biblio}

\begin{thebibliography}{10}

\bibitem{arridge1999optical}
Simon~R Arridge.
\newblock Optical tomography in medical imaging.
\newblock {\em Inverse problems}, 15(2):R41, 1999.

\bibitem{gibson2005recent}
AP~Gibson, JC~Hebden, and Simon~R Arridge.
\newblock Recent advances in diffuse optical imaging.
\newblock {\em Physics in Medicine \& Biology}, 50(4):R1, 2005.

\bibitem{heino2002estimation}
Jenni Heino and Erkki Somersalo.
\newblock Estimation of optical absorption in anisotropic background.
\newblock {\em Inverse Problems}, 18(3):559, 2002.

\bibitem{harrach2009uniqueness}
Bastian Harrach.
\newblock On uniqueness in diffuse optical tomography.
\newblock {\em Inverse problems}, 25(5):055010, 2009.

\bibitem{calderon1980inverse}
AP~Calder{\'o}n.
\newblock On an inverse boundary problem, seminar on numerical analysis and its
  applications to continuum physics, soc.
\newblock {\em Brasiliera de Matematica, Rio de Janeiro}, 61:73, 1980.

\bibitem{sylvester1987global}
John Sylvester and Gunther Uhlmann.
\newblock A global uniqueness theorem for an inverse boundary value problem.
\newblock {\em Annals of mathematics}, pages 153--169, 1987.

\bibitem{uhlmann2009electrical}
Gunther Uhlmann.
\newblock Electrical impedance tomography and calder{\'o}n's problem.
\newblock {\em Inverse problems}, 25(12):123011, 2009.

\bibitem{alessandrini1988stable}
Giovanni Alessandrini.
\newblock Stable determination of conductivity by boundary measurements.
\newblock {\em Applicable Analysis}, 27(1-3):153--172, 1988.

\bibitem{mandache2001exponential}
Niculae Mandache.
\newblock Exponential instability in an inverse problem for the schr{\"o}dinger
  equation.
\newblock {\em Inverse Problems}, 17(5):1435, 2001.

\bibitem{alessandrini2005lipschitz}
Giovanni Alessandrini and Sergio Vessella.
\newblock Lipschitz stability for the inverse conductivity problem.
\newblock {\em Advances in Applied Mathematics}, 35(2):207--241, 2005.

\bibitem{beretta2011lipschitz}
Elena Beretta and Elisa Francini.
\newblock Lipschitz stability for the electrical impedance tomography problem:
  the complex case.
\newblock {\em Communications in Partial Differential Equations},
  36(10):1723--1749, 2011.

\bibitem{alessandrini2017lipschitz}
Giovanni Alessandrini, V~Maarten, Romina Gaburro, and Eva Sincich.
\newblock Lipschitz stability for the electrostatic inverse boundary value
  problem with piecewise linear conductivities.
\newblock {\em Journal de Math{\'e}matiques Pures et Appliqu{\'e}es},
  107(5):638--664, 2017.

\bibitem{gaburro2015lipschitz}
Romina Gaburro and Eva Sincich.
\newblock Lipschitz stability for the inverse conductivity problem for a
  conformal class of anisotropic conductivities.
\newblock {\em Inverse Problems}, 31(1):015008, 2015.

\bibitem{alessandrini2018lipschitz}
Giovanni Alessandrini, Maarten~V de~Hoop, Romina Gaburro, and Eva Sincich.
\newblock Lipschitz stability for a piecewise linear schr{\"o}dinger potential
  from local cauchy data.
\newblock {\em Asymptotic Analysis}, 108(3):115--149, 2018.

\bibitem{beretta2013lipschitz}
Elena Beretta, Maarten~V De~Hoop, and Lingyun Qiu.
\newblock Lipschitz stability of an inverse boundary value problem for a
  schrodinger-type equation.
\newblock {\em SIAM Journal on Mathematical Analysis}, 45(2):679--699, 2013.

\bibitem{alberti2019calderon}
Giovanni~S Alberti and Matteo Santacesaria.
\newblock Calder{\'o}n’s inverse problem with a finite number of
  measurements.
\newblock In {\em Forum of Mathematics, Sigma}, volume~7. Cambridge University
  Press, 2019.

\bibitem{harrach2019uniqueness}
Bastian Harrach.
\newblock Uniqueness and lipschitz stability in electrical impedance tomography
  with finitely many electrodes.
\newblock {\em Inverse Problems}, 35(2):024005, 2019.

\bibitem{harrach2019global}
Bastian Harrach and Houcine Meftahi.
\newblock Global uniqueness and lipschitz-stability for the inverse robin
  transmission problem.
\newblock {\em SIAM Journal on Applied Mathematics}, 79(2):525--550, 2019.

\bibitem{eberle2019lipschitz}
Sarah Eberle, Bastian Harrach, Houcine Meftahi, and Taher Rezgui.
\newblock Lipschitz stability estimate and reconstruction of lam{\'e}
  parameters in linear elasticity.
\newblock {\em Inverse Problems in Science and Engineering}, pages 1--22, 2020.

\bibitem{arridge1998nonuniqueness}
Simon~R Arridge and William~RB Lionheart.
\newblock Nonuniqueness in diffusion-based optical tomography.
\newblock {\em Optics letters}, 23(11):882--884, 1998.

\bibitem{arnold2013unique}
Lilian Arnold and Bastian Harrach.
\newblock Unique shape detection in transient eddy current problems.
\newblock {\em Inverse Problems}, 29(9):095004, 2013.

\bibitem{barth2017detecting}
Andrea Barth, Bastian Harrach, Nuutti Hyv{\"o}nen, and Lauri Mustonen.
\newblock Detecting stochastic inclusions in electrical impedance tomography.
\newblock {\em Inverse Problems}, 33(11):115012, 2017.

\bibitem{brander2018monotonicity}
Tommi Brander, Bastian Harrach, Manas Kar, and Mikko Salo.
\newblock Monotonicity and enclosure methods for the p-laplace equation.
\newblock {\em SIAM Journal on Applied Mathematics}, 78(2):742--758, 2018.

\bibitem{griesmaier2018monotonicity}
Roland Griesmaier and Bastian Harrach.
\newblock Monotonicity in inverse medium scattering on unbounded domains.
\newblock {\em SIAM Journal on Applied Mathematics}, 78(5):2533--2557, 2018.

\bibitem{harrach2012simultaneous}
Bastian Harrach and Matti Lassas.
\newblock Simultaneous determination of the diffusion and absorption
  coefficient from boundary data.
\newblock {\em Inverse Problems \& Imaging}, 6(4), 2012.

\bibitem{harrach2018localizing}
Bastian Harrach, Yi-Hsuan Lin, and Hongyu Liu.
\newblock On localizing and concentrating electromagnetic fields.
\newblock {\em SIAM Journal on Applied Mathematics}, 78(5):2558--2574, 2018.

\bibitem{harrach2010exact}
Bastian Harrach and Jin~Keun Seo.
\newblock Exact shape-reconstruction by one-step linearization in electrical
  impedance tomography.
\newblock {\em SIAM Journal on Mathematical Analysis}, 42(4):1505--1518, 2010.

\bibitem{harrach2017local}
Bastian Harrach and Marcel Ullrich.
\newblock Local uniqueness for an inverse boundary value problem with partial
  data.
\newblock {\em Proceedings of the American Mathematical Society},
  145(3):1087--1095, 2017.

\bibitem{harrach2020uniqueness}
Bastian Harrach.
\newblock Uniqueness, stability and global convergence for a discrete inverse
  elliptic robin transmission problem.
\newblock {\em Numerische Mathematik}, pages 1--42, 2020.

\bibitem{alessandrini1996determining}
Giovanni Alessandrini, Elena Beretta, and Sergio Vessella.
\newblock Determining linear cracks by boundary measurements: Lipschitz
  stability.
\newblock {\em SIAM Journal on Mathematical Analysis}, 27(2):361--375, 1996.

\bibitem{bellassoued2006lipschitz}
M~Bellassoued, D~Jellali, and M~Yamamoto.
\newblock Lipschitz stability for a hyperbolic inverse problem by finite local
  boundary data.
\newblock {\em Applicable Analysis}, 85(10):1219--1243, 2006.

\bibitem{bellassoued2007lipschitz}
Mourad Bellassoued and Masahiro Yamamoto.
\newblock Lipschitz stability in determining density and two lam{\'e}
  coefficients.
\newblock {\em Journal of mathematical analysis and applications},
  329(2):1240--1259, 2007.

\bibitem{imanuvilov1998lipschitz}
Oleg~Yu Imanuvilov and Masahiro Yamamoto.
\newblock Lipschitz stability in inverse parabolic problems by the carleman
  estimate.
\newblock {\em Inverse problems}, 14(5):1229, 1998.

\bibitem{imanuvilov2001global}
Oleg~Yu Imanuvilov and Masahiro Yamamoto.
\newblock Global lipschitz stability in an inverse hyperbolic problem by
  interior observations.
\newblock {\em Inverse problems}, 17(4):717, 2001.

\bibitem{kazemi1993stability}
Mohammad~A Kazemi and Michael~V Klibanov.
\newblock Stability estimates for ill-posed cauchy problems involving
  hyperbolic equations and inequalities.
\newblock {\em Applicable Analysis}, 50(1-2):93--102, 1993.

\bibitem{sincich2007lipschitz}
Eva Sincich.
\newblock Lipschitz stability for the inverse robin problem.
\newblock {\em Inverse problems}, 23(3):1311, 2007.

\bibitem{kelley1999iterative}
Carl~T Kelley.
\newblock {\em Iterative methods for optimization}.
\newblock SIAM, 1999.

\bibitem{klose2002optical}
Alexander~D Klose, Uwe Netz, J{\"u}rgen Beuthan, and Andreas~H Hielscher.
\newblock Optical tomography using the time-independent equation of radiative
  transfer—part 1: forward model.
\newblock {\em Journal of Quantitative Spectroscopy and Radiative Transfer},
  72(5):691--713, 2002.

\bibitem{klose1999iterative}
Alexander~D Klose and Andreas~H Hielscher.
\newblock Iterative reconstruction scheme for optical tomography based on the
  equation of radiative transfer.
\newblock {\em Medical physics}, 26(8):1698--1707, 1999.

\bibitem{klose2002optical2}
Alexander~D Klose and Andreas~H Hielscher.
\newblock Optical tomography using the time-independent equation of radiative
  transfer—part 2: inverse model.
\newblock {\em Journal of Quantitative Spectroscopy and Radiative Transfer},
  72(5):715--732, 2002.

\bibitem{klose2003quasi}
Alexander~D Klose and Andreas~H Hielscher.
\newblock Quasi-newton methods in optical tomographic image reconstruction.
\newblock {\em Inverse problems}, 19(2):387, 2003.

\bibitem{isakov2006inverse}
Victor Isakov.
\newblock {\em Inverse problems for partial differential equations}, volume
  127.
\newblock Springer, 2006.

\bibitem{kohn1985determining}
Robert~V Kohn and Michael Vogelius.
\newblock Determining conductivity by boundary measurements ii. interior
  results.
\newblock {\em Communications on Pure and Applied Mathematics}, 38(5):643--667,
  1985.

\bibitem{miranda2013partial}
Carlo Miranda.
\newblock {\em Partial differential equations of elliptic type}, volume~2.
\newblock Springer-Verlag, 2013.

\bibitem{druskin1998uniqueness}
Vladimir Druskin.
\newblock On the uniqueness of inverse problems from incomplete boundary data.
\newblock {\em SIAM Journal on Applied Mathematics}, 58(5):1591--1603, 1998.

\bibitem{hanke2017taste}
Martin Hanke.
\newblock {\em A Taste of Inverse Problems: Basic Theory and Examples}.
\newblock SIAM, 2017.

\bibitem{gebauer2008localized}
Bastian Gebauer.
\newblock Localized potentials in electrical impedance tomography.
\newblock {\em Inverse Probl. Imaging}, 2(2):251--269, 2008.

\bibitem{clason2010semismooth}
Christian Clason, Bangti Jin, and Karl Kunisch.
\newblock A semismooth newton method for l\^{}1 data fitting with automatic
  choice of regularization parameters and noise calibration.
\newblock {\em SIAM Journal on Imaging Sciences}, 3(2):199--231, 2010.

\bibitem{clason2010duality}
Christian Clason, Bangti Jin, and Karl Kunisch.
\newblock A duality-based splitting method for ℓ\^{}1-tv image restoration
  with automatic regularization parameter choice.
\newblock {\em SIAM Journal on Scientific Computing}, 32(3):1484--1505, 2010.

\bibitem{clason2012semismooth}
Christian Clason and Bangti Jin.
\newblock A semismooth newton method for nonlinear parameter identification
  problems with impulsive noise.
\newblock {\em SIAM Journal on Imaging Sciences}, 5(2):505--536, 2012.

\bibitem{clason2012fitting}
Christian Clason.
\newblock L∞ fitting for inverse problems with uniform noise.
\newblock {\em Inverse Problems}, 28(10):104007, 2012.

\bibitem{ito2011regularization}
Kazufumi Ito, Bangti Jin, and Tomoya Takeuchi.
\newblock A regularization parameter for nonsmooth tikhonov regularization.
\newblock {\em SIAM Journal on Scientific Computing}, 33(3):1415--1438, 2011.

\bibitem{arridge1998gradient}
Simon~R Arridge and Martin Schweiger.
\newblock A gradient-based optimisation scheme for optical tomography.
\newblock {\em Optics Express}, 2(6):213--226, 1998.

\end{thebibliography}
\end{document}